\newtheorem{Theoremx}{Theorem}
\newtheorem{theorem}{Theorem}[section]
\theoremstyle{definition}
\newtheorem{setting}[theorem]{Setting}
\theoremstyle{definition}
\newtheorem{lemma}[theorem]{Lemma}
\newtheorem{proposition}[theorem]{Proposition}
\newtheorem{corollary}[theorem]{Corollary}
\newtheorem{question}{Question}
\theoremstyle{definition}
\newtheorem{definition}[theorem]{Definition}
\newtheorem{remark}[theorem]{Remark}
\theoremstyle{remark}
\newcommand{\Ass}{\operatorname{Ass}}
\newcommand{\Spec}{\operatorname{Spec}}
\newcommand{\Height}{\operatorname{ht}}
\newcommand{\id}{\operatorname{id}}
\newcommand{\Soc}{\operatorname{Soc}}
\newcommand{\Hom}{\operatorname{Hom}}
\newcommand{\depth}{\operatorname{depth}}
\newcommand{\Frac}{\operatorname{Frac}}
\newcommand{\Z}{\mathbb{Z}}
\newcommand{\sk}{\mathcal{k}}
\newcommand{\fm}{\mathfrak{m}}
\newcommand{\fp}{\mathfrak{p}}
\newcommand{\fq}{\mathfrak{q}}
\newcommand{\fn}{\mathfrak{n}}
\newcommand{\fP}{\mathfrak{P}}
\newcommand{\fQ}{\mathfrak{Q}}
\newcommand{\wh}{\widehat}
\crefname{Theoremx}{Theorem}{Theorems}
\titleformat{\section}[block]{\large\scshape\bfseries\filcenter}{\thesection.}{1em}{}
\titleformat{\subsection}[hang]{\large\scshape\bfseries}{\thesubsection}{1em}{}	
\titleformat{\subsubsection}[hang]{\large\scshape\bfseries}{\thesubsubsection}{1em}{}
\begin{document}

\title{Noncatenary splinters in prime characteristic}

\author[Loepp]{S. Loepp}
\address{Department of Mathematics and Statistics, Williams College, Williamstown, MA 01267}
\email{sloepp@williams.edu}

\author[Simpson]{Austyn Simpson}
\thanks{Simpson was supported by NSF postdoctoral fellowship DMS \#2202890.}
\address{Department of Mathematics, University of Michigan, Ann Arbor, MI 48109 USA}
\email{austyn@umich.edu}
\urladdr{\url{https://austynsimpson.github.io}}

\maketitle

\begin{abstract}
We construct a local Noetherian splinter (in fact, a weakly $F$-regular domain) in prime characteristic which is not catenary, which we view as an analogue of a theorem of Ogoma in equal characteristic zero. Moreover, we construct a weakly $F$-regular local UFD which is not Cohen--Macaulay. Both of these examples are obtained via finding sufficient conditions ensuring that a complete local ring of prime characteristic is the completion of some weakly $F$-regular local domain, which we expect to be of independent interest. 
\end{abstract}

\section{Introduction}

Let $(R,\fm,\sk)$ be a Noetherian local ring. Recall that $R$ is \emph{catenary} if given any pair of primes $\fp\subsetneq \fq\in\Spec R$, any two saturated chains of primes between $\fp$ and $\fq$ have the same length. The first example of a noncatenary ring was constructed by Nagata \cite{Nag56}, but the problem remained for several decades to find an example which was normal until Ogoma did just that \cite[\S III]{Ogo80} (see also \cite{Hei82}). Interpreting Ogoma's theorem through the lens of \emph{splinters} is the primary motivation for the present article.

Recall that a Noetherian local ring $(R,\fm,\sk)$ is a \emph{splinter} if any module-finite ring extension $\phi:R\rightarrow S$ splits in the category of $R$-modules --- that is, if there exists a map $\psi\in\Hom_R(S,R)$ such that $\psi\circ\phi=\id_R$. The content of the celebrated Direct Summand Theorem \cite{Hoc73,And18} is that regular local rings of any characteristic are splinters. Notably, the behavior of the splinter property is highly dependent on the characteristic of the ring. For example, one of the preeminent open conjectures in prime characteristic commutative algebra is that $F$-finite splinters are \emph{strongly $F$-regular}, a mild singularity type analogous to the class of Kawamata log terminal singularities appearing in the birational classification of complex algebraic varieties. By contrast, local rings of equal characteristic zero are splinters if and only if they are normal \cite[Lemma 2]{Hoc73}. From this point of view, Ogoma's theorem says that local splinters which contain the rational numbers can fail to be catenary, and our first contribution that we highlight is that the same phenomenon may occur in prime characteristic.

\begin{Theoremx}\label{maintheorem:noncatenary} (= \cref{theorem:noncatenary})
    There exists a local splinter of prime characteristic which is not catenary.
\end{Theoremx}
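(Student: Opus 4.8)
The strategy is to reduce to the main technical result announced in the abstract: that a complete local ring $(T,\fm)$ of prime characteristic satisfying a suitable list of hypotheses is the completion of a weakly $F$-regular local domain $R$, where moreover the construction of $R$ can be arranged to retain control over a prescribed prime of $T$. Granting such a statement --- call it the realization theorem --- and recalling that a weakly $F$-regular local ring is a splinter (it is normal, hence approximately Gorenstein, while $IS\cap R$ is contained in the tight closure of $I$, which equals $I$, for every ideal $I$ and every module-finite extension $R\subseteq S$, so that $R\subseteq S$ is cyclically pure and therefore split), it suffices to produce a complete local ring $T$ of characteristic $p$ that is reduced, non-equidimensional, and forces every local domain with completion $T$ to be noncatenary.

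For the input ring I would take (a minor variant, as dictated by the precise hypotheses of the realization theorem, of) $T=k[[w,x,y,z]]/(wx,wy)$ with $k$ a field of characteristic $p$. This ring is reduced, with minimal primes $P=(w)T$ and $Q=(x,y)T$ satisfying $\dim T/P = 3 > 2 = \dim T/Q$; thus $\dim T = 3$ while $T$ is not equidimensional, and one checks routinely that $\depth T = 2$, that $p$ is a nonzerodivisor (automatic in equal characteristic $p$), and that the remaining hypotheses hold. It is essential that $T$ itself is very far from being weakly $F$-regular --- it is not even a domain --- so that the content of the realization theorem in this instance is the construction of a proper Noetherian subring $R\subsetneq T$.

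Applying the realization theorem yields a weakly $F$-regular --- hence normal, hence splinter --- local domain $(R,\fm_R)$ with $\widehat{R}\cong T$, arranged so that the non-maximal prime $\fq:=(x,y,z)T$ of $T$ is extended from $R$; set $\fp:=\fq\cap R$, so $\fq=\fp\widehat{R}$. Then $\dim R=\dim T=3$. On the other hand $\widehat{R/\fp}\cong \widehat{R}/\fp\widehat{R}\cong T/\fq\cong k[[w]]$, so $\dim R/\fp=1$, and since $R_\fp\to(\widehat{R})_\fq$ is faithfully flat with zero-dimensional closed fibre we get $\Height_R\fp=\Height_T\fq=1$ (the prime $\fq$ contains the minimal prime $Q$, with $\Height(\fq/Q)=1$, but does not contain $P$). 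Hence $(0)\subsetneq\fp\subsetneq\fm_R$ is a saturated chain of length $2$, whereas $\dim R=3$ provides a saturated chain of length $3$ between $(0)$ and $\fm_R$; therefore $R$ is not catenary. The point is that although $\widehat{R}=T$ is itself catenary, being complete, its failure of equidimensionality --- transmitted down to $R$ through the extended prime $\fq$ --- obstructs catenarity of $R$.

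The main obstacle is the realization theorem itself. Weak $F$-regularity is not known to localize or to be detected on the punctured spectrum, so one cannot simply invoke good behaviour of $T$ away from $\fm$; instead $R$ must be built as an increasing union of excellent (e.g.\ essentially finite type over a field) subrings of $T$, in the spirit of Heitmann's completion constructions, successively adjoining elements so as to force $\widehat{R}=T$, to keep $R$ a domain, to keep $\fq$ extended from $R$, and --- the delicate point --- to ensure that every potential failure of tight closure is eventually killed. Checking that the resulting union is Noetherian with the prescribed completion and is genuinely weakly $F$-regular is where essentially all of the work lies.
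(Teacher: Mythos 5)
Your proposal uses the same (up to relabeling) complete ring $T = k[[w,x,y,z]]/(wx,wy)$ that the paper uses in \cref{theorem:noncatenary}, and the same high-level strategy of reducing to the realization theorem; the route is essentially correct. The difference is in how noncatenarity is extracted: the paper's \cref{theorem:noncatenary} forces associates of $x+w$, $y+w$, $z+w$, $w$ into the precompletion $A$ and then exhibits two explicit saturated chains of lengths $2$ and $3$ from $(0)$ to $\fm_A$, whereas you exhibit a single prime $\fq=(x,y,z)T$ with $\Height_T\fq+\dim(T/\fq)=2<3=\dim T$ and transport that defect to $R$. Your method is closer to the paper's more general argument in \cref{theorem:f-regular-precompletion-non-gor-noncatenary} (which uses the SCA-subring machinery of \cref{theorem:p-SCA-subring-precompletion} to get a one-to-one correspondence between nonzero primes of $A$ and primes of $T$ avoiding a fixed set).

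The one step you should not leave implicit is the phrase ``arranged so that $\fq$ is extended from $R$,'' i.e., $\fq=\fp\widehat{R}$ where $\fp=\fq\cap R$. The realization theorem in the form announced (\cref{theorem:f-regular-precompletion-non-gor}) lets you force $T$-associates of a prescribed countable set of regular elements $v_i\in\fm\setminus\fp_0$ into $A$, where $\fp_0$ is the nonmaximal prime controlling $M_{I_s,y_s}$; it does not directly speak about extended primes. Merely forcing $z\in A$ (up to unit) would only give $z\in\fp$, hence $\dim T/\fp T\le\dim T/(z)=2$, not the needed equality $\dim T/\fp T=1$, so a saturated chain of length $2$ would not yet be in hand. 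To close this, note that one can apply the power-series trick (\cref{corollary:f-regular-power-series-precompletion-non-gor}) with $T=T'[[z]]$, $T'=k[[w,x,y]]/(wx,wy)$, so $\fp_0=(w,x,y)T$; then $z$, $y+z$, $x+z$ are regular elements of $T$ lying in $\fm\setminus\fp_0$, and forcing their associates into $R$ gives $\fp T\supseteq(z,y+z,x+z)T=\fq$, hence $\fp T=\fq$. Now $\dim R/\fp=\dim T/\fq=1$, and $\Height_R\fp\le\Height_T\fq=1$ by going down (with $\fp\neq 0$ giving equality), so $(0)\subsetneq\fp\subsetneq\fm_R$ is a saturated chain of length $2$ while $\dim R=3$, as you wanted. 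With that supplement your argument is complete and parallels the paper's.
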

\cref{theorem:noncatenary} is \emph{a fortiori} stronger than the above statement in that the ring we construct turns out to be \emph{weakly $F$-regular}. This property (which implies the splinter condition --- see for instance \cite[Remark 2.4.1(2)]{DT23}) prescribes that Hochster and Huneke's operation of tight closure is trivial on all ideals of the ring. If $(R,\fm)$ is a local ring of prime characteristic $p>0$ and $I\subseteq R$ is an ideal, recall that the \emph{tight closure of $I$}, denoted $I^*$, is the ideal consisting of all ring elements $r\in R$ for which $cr^{p^e}\in I^{[p^e]}$ for some $c\in R\setminus\bigcup\limits_{\min(R)}\fp$ and all $e\gg 0$. Here, $I^{[p^e]}$ is the ideal generated by elements of the form $r^{p^e}$ where $r\in I$. We say that $R$ is weakly $F$-regular if $I=I^*$ for every ideal $I\subseteq R$.

The rings exhibiting \cref{maintheorem:noncatenary} are not excellent, as all excellent rings are catenary by definition. On this thread, it should be noted that both weak $F$-regularity and the splinter condition are considerably better behaved within the class of excellent rings, as is the operation of tight closure more broadly. Indeed, if $(R,\fm,\sk)$ is an excellent local ring of prime characteristic $p>0$, then tight closure enjoys \emph{colon capturing} --- that is, for a system of parameters $x_1,\ldots, x_d\in \fm$ one always has the containment\footnote{Heitmann has constructed non-excellent examples (UFDs, in fact) for which the containment (\ref{eq:cc}) fails \cite{Hei10}.}
\begin{equation}
(x_1,\ldots, x_{d-1}):x_d\subseteq (x_1,\ldots, x_{d-1})^*.\label{eq:cc}
\end{equation}

If $R$ is further assumed to be a splinter (resp. weakly $F$-regular) then the $\fm$-adic completion $\wh{R}$ is also a splinter \cite[Theorem C]{DT23} (resp. weakly $F$-regular \cite[Corollary 7.28]{HH94}). In the absence of excellence, both of these statements are false in general as first demonstrated in \cite{LR01} (see also \cite[Example 3.2.1]{DT23}). We obtain \cref{maintheorem:noncatenary} by exploiting this failure. Specifically, we find sufficient conditions ensuring that a complete local ring is the completion of \emph{some} weakly $F$-regular local domain, and these conditions are broad enough to allow for the examples described in \cref{maintheorem:noncatenary}. Our first result in this vein is:

\begin{Theoremx}\label{maintheorem:precompletion-1} (= \cref{theorem:f-regular-precompletion-non-gor,theorem:f-regular-precompletion-non-gor-noncatenary})
    Let $(T,\fm,\sk)$ be a complete local reduced (hence approximately Gorenstein) ring of prime characteristic $p>0$ with $\dim(T)\geq 2$. Suppose that $\{I_s\}$ is an approximately Gorenstein sequence of ideals, and let $y_s\in T$ generate the socle of $T/I_s$. Make the additional two assumptions:
\begin{enumerate}
    \item $y_s^{p^e}\not\in I_s^{[p^e]}$ for all $s$ and all $e$;\label{theorem:theoremA-1}
    \item there exists a nonmaximal prime $\fp\in\Spec(T)$ such that $M_{I_s,y_s}\subseteq \fp$ for all $s$.\label{theorem:theoremA-2}
\end{enumerate}
Then there exists a weakly $F$-regular local domain $(A,\fm\cap A)$ such that $\wh{A}\cong T$. Moreover, if we assume that $T$ is not equidimensional and has $\depth T\geq 2$ then $A$ may be chosen to be noncatenary.
\end{Theoremx}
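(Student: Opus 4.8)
The plan is to realize $T$ as the completion of a well-chosen local domain $A$ by the now-standard machinery of building a subring of $T$ inside which a prescribed complete local ring is the completion, due to Heitmann and developed extensively by Loepp and collaborators. Concretely, one constructs $A$ as a direct limit of ``small'' subrings $R_\lambda \subseteq T$, each Noetherian of cardinality less than that of $T$, arranged so that: (i) $R_\lambda \to T$ induces an isomorphism $T/\fm^n \cong R_\lambda/(\fm^n \cap R_\lambda)$ for all $n$; (ii) every prime of $R_\lambda$ outside the generic locus is "maximally extended" so that the going-down and incomparability behavior between $\Spec A$ and $\Spec T$ is controlled; and (iii) certain elements have been adjoined to kill obstructions. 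Passing to the limit, $A$ is Noetherian (by the Noetherian criterion of Heitmann, one checks $\fm_A A = \fm$ and $A/\fm_A \cong T/\fm$ together with $\fm_A/\fm_A^2$ finite-dimensional) and $\wh{A} \cong T$. The domain condition is arranged because $T$ is reduced and one adjoins enough elements to separate minimal primes; in fact one forces $A$ to be a domain by ensuring a single minimal prime survives.

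The weak $F$-regularity of $A$ is where hypotheses (\ref{theorem:theoremA-1}) and (\ref{theorem:theoremA-2}) enter, and this is the crux. Since $T$ is reduced it is approximately Gorenstein, so by the Hochster--Huneke criterion it suffices to show that each $I_s A$ (or rather $I_s \cap A$, suitably interpreted) is tightly closed in $A$ — equivalently, that the socle element $y_s$ is not in $(I_s)^*_A$. The obstruction to $y_s \in (I_s)^*$ is exactly the existence of a test-type element $c$ with $c y_s^{p^e} \in I_s^{[p^e]}$ for all large $e$; assumption (\ref{theorem:theoremA-1}) says this already fails in $T$ with $c = 1$, but for a general $c$ one needs more. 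This is precisely what assumption (\ref{theorem:theoremA-2}) buys: $M_{I_s,y_s}$ is (presumably defined earlier as) the set of candidate multipliers $c$ witnessing $y_s \in I_s^*$ up to Frobenius, and requiring all of these to lie in a fixed nonmaximal prime $\fp$ means that during the limit construction we can always choose our adjoined elements to avoid $M_{I_s,y_s}$, so that in $A$ no legitimate test element $c$ (which must avoid minimal primes of $A$ but may be chosen cleverly) lands in the bad set. Thus each $I_s$ stays tightly closed in $A$, and approximate Gorenstein-ness promotes this to weak $F$-regularity of $A$. This is the step I expect to be the main obstacle: one must interleave the cardinality-reduction subring construction with the tight-closure bookkeeping, ensuring at each stage that adjoining elements to guarantee Noetherianness and the domain property does not accidentally create a multiplier realizing $y_s \in I_s^*$.

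For the "moreover" clause, suppose additionally that $T$ is not equidimensional and $\depth T \geq 2$. Noncatenarity of $A$ will follow from a dimension mismatch between $A$ and $\wh A = T$ that the construction transfers. Since $T$ is not equidimensional, it has minimal primes $\fq_1, \fq_2$ with $\dim T/\fq_1 \neq \dim T/\fq_2$; because $A$ is a domain, $\Spec A$ has a unique minimal prime $(0)$, but the fibers of $\Spec T \to \Spec A$ over $(0)$ are exactly $\{\fq_1,\fq_2,\ldots\}$. The key point is that the subring construction can be performed so that $A$ has a prime $\fp_A$ with $\fp_A T$ having (at least) two minimal primes $\mathfrak{Q}_1 \supseteq \fq_1$, $\mathfrak{Q}_2 \supseteq \fq_2$ lying over it with $\dim T/\mathfrak{Q}_1 \neq \dim T/\mathfrak{Q}_2$; since $T$ is catenary (it's complete local, hence excellent) while $\dim A/\fp_A = \dim T/\fp_A T$, the two saturated chains in $A$ from $(0)$ through $\fp_A$ to $\fm_A$ obtained by pulling back saturated chains through $\mathfrak{Q}_1$ versus $\mathfrak{Q}_2$ have different lengths. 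The depth hypothesis $\depth T \geq 2$ ensures the construction has enough room — it is the standard technical condition (cf. the Heitmann/Loepp framework) guaranteeing that $A$ can be taken to be a domain with $\wh A = T$ while keeping the relevant primes distinct — and hence $A$ is not catenary. I would cite \cref{theorem:noncatenary} and the preceding structural results for the precise bookkeeping, as the argument is a refinement of Ogoma's strategy adapted to the prime characteristic, weakly $F$-regular setting.
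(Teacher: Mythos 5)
Your high-level strategy for the first conclusion tracks the paper reasonably well, but the mechanism is a bit fuzzier in your account than it needs to be. The paper first performs a preparatory prime-avoidance step (replace $y_s$ by $y_s + z_s$ with $z_s \in I_s$) so that each socle generator is a regular element of $T$ lying outside $\fp$; this is not optional, as the subring machinery (\cref{theorem:p-subring-precompletion}) requires precisely that. One then builds $A$ with $\wh A \cong T$, $A \cap \fp = (0)$, and $y_s t_s \in A$ for some unit $t_s \in T$. Weak $F$-regularity is then immediate: if $0 \neq c \in A$ witnessed $y_s t_s \in J_s^*$ (where $J_s = I_s \cap A$ is the corresponding approximately Gorenstein sequence in $A$), then $c \in M_{I_s,y_s} \cap A \subseteq \fp \cap A = (0)$, a contradiction. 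Your phrasing of ``choosing adjoined elements to avoid $M_{I_s,y_s}$'' is close in spirit, but the actual mechanism is the maintained invariant $A \cap \fp = (0)$ throughout the construction, not a separate avoidance condition.

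The noncatenary ``moreover'' is where your argument has a genuine gap. You propose to pull back two saturated chains through distinct minimal primes $\mathfrak{Q}_1$, $\mathfrak{Q}_2$ of $\fp_A T$ to get chains of different lengths in $A$ from $(0)$ through $\fp_A$. But intersecting a saturated chain in $T$ with $A$ need not produce a saturated chain in $A$, and you have no control over which primes of $T$ contract to $(0)$ or whether heights are preserved. The paper handles this by upgrading from $\fp$-subrings to SCA-subrings (\cref{theorem:p-SCA-subring-precompletion}), which delivers two crucial extra properties: (i) the set of primes of $T$ contracting to $(0)$ in $A$ is exactly $G = \{P \subseteq Q : Q \in \Ass(T) \cup \{\fp\}\}$, and (ii) for every nonzero $J \in \Spec A$ one has $T \otimes_A \kappa(J) \cong \kappa(J)$. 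Together these give a height- and coheight-preserving bijection between nonzero primes of $A$ and primes of $T$ outside $G$. Combined with \cref{lemma:specialchains}, which produces a saturated chain $\fp_0 \subsetneq \fq_1 \subsetneq \cdots \subsetneq \fq_{n-1} \subsetneq \fm$ from a short minimal prime $\fp_0$ (with $n = \dim T/\fp_0 < \dim T$) such that each $\fq_i \not\subseteq \fp$ and $\fp_0$ is the unique minimal prime below each $\fq_i$, one sets $\fQ = \fq_{n-1} \cap A$ and computes $\Height \fQ + \dim(A/\fQ) \leq n < \dim A$. Your sketch does not establish the bijection, does not control the formal fibers, and as written the pullback chains might collapse or fail to be saturated; you would essentially need to reconstruct the SCA-subring refinement to make it rigorous.
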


The definition of the approximately Gorenstein property and of the ideals $M_{I_s,y_s}$ is postponed to \cref{section:non-gor}, but experts will recognize condition (\ref{theorem:theoremA-2}) as requiring (in case $T$ is $F$-finite and $F$-pure) that the \emph{splitting prime} of $T$ as defined in \cite{AE05} is not the maximal ideal (see also \cite[Lemma 6.2]{PT18}).

Notably absent from the list of assumptions on $T$ is the Cohen--Macaulay property, and it is this absence (among other details) which allows for the noncatenary ring described in \cref{maintheorem:noncatenary}. This points to another striking departure from the excellent scenario. Namely, if $(R,\fm)$ is a weakly $F$-regular local ring which is excellent (or more generally, is a homomorphic image of a Cohen-Macaulay local ring) then it follows that $R$ is Cohen--Macaulay\footnote{The same is true for excellent splinters, e.g. by \cite[Remark 2.4.1(3)]{DT23}.} by \cite[Proposition 4.2(c)]{HH94}. Since Cohen--Macaulay rings are catenary, \cref{maintheorem:noncatenary} also shows that there exist weakly $F$-regular rings which are not Cohen--Macaulay, which we believe to be novel in its own right. Using \cref{maintheorem:precompletion-1} we can even produce such examples which are unique factorization domains:

\begin{Theoremx}\label{maintheorem:noncm} (= \cref{corollary:non-cm-UFD})
    There exists a weakly $F$-regular local UFD which is not Cohen--Macaulay.
\end{Theoremx}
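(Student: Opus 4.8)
**Proof proposal for Theorem C (= Corollary \ref{corollary:non-cm-UFD}, existence of a weakly $F$-regular local UFD which is not Cohen–Macaulay).**

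The plan is to apply Theorem B (= \cref{maintheorem:precompletion-1}) to a carefully chosen complete local ring $T$ of prime characteristic, so it suffices to exhibit a complete local reduced ring $(T,\fm,\sk)$ with $\dim T \ge 2$, $\depth T \ge 2$, which is \emph{not equidimensional} (so that the resulting $A$ is noncatenary, hence not Cohen–Macaulay since Cohen–Macaulay rings are catenary and in fact universally catenary), carries an approximately Gorenstein sequence $\{I_s\}$ with socle generators $y_s$ satisfying the Frobenius non-containment condition \eqref{theorem:theoremA-1} and the ``splitting prime is not maximal'' condition \eqref{theorem:theoremA-2}, and additionally has the property that the weakly $F$-regular domain $A$ produced with $\wh A \cong T$ is a UFD. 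The first thing I would do is recall that, by Heitmann's refinement of Lech's theorem and the standard machinery for realizing complete local rings as completions of ``nice'' local rings (e.g. the Loepp–Rotthaus circle of ideas cited in the excerpt, and the divisor-class-group control developed there), $A$ will be a UFD as soon as $\wh A \cong T$ satisfies $T$ reduced, the class group of $A$ is forced to be trivial, and one can arrange $\operatorname{depth} T \ge 2$ together with a ``no embedded primes / unit multiplier'' genericity condition on the minimal primes of $T$; so the construction of $T$ must simultaneously be non-equidimensional (to kill Cohen–Macaulayness) and set up so the UFD-forcing hypotheses hold.

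Concretely, I would build $T$ as a quotient of a power series ring glued along two components of different dimensions meeting in a suitable way — the standard Nagata/Ogoma-type template. For instance, take $T = k[[x_1,\dots,x_m,y_1,\dots,y_n]]/\mathfrak{q}_1 \cap \mathfrak{q}_2$ where $\mathfrak{q}_1,\mathfrak{q}_2$ are primes with $\dim T/\mathfrak{q}_1 = 2$ and $\dim T/\mathfrak{q}_2 = 3$ (say), chosen so that $T$ is reduced, $\depth T \ge 2$ (achievable by taking the two components to meet in a $1$-dimensional, or better a $\ge 1$-dimensional connected-in-codimension-one, locus and invoking the standard depth estimate for a fiber-product / connectedness argument, e.g. via the Mayer–Vietoris sequence $0 \to T \to T/\mathfrak q_1 \oplus T/\mathfrak q_2 \to T/(\mathfrak q_1+\mathfrak q_2)\to 0$ and a local cohomology computation), and so that $T$ is ``$F$-pure at a non-maximal prime'' in the precise sense encoded by \eqref{theorem:theoremA-2}. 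The approximately Gorenstein sequence exists automatically since $T$ is reduced (as the excerpt parenthetically notes), and conditions \eqref{theorem:theoremA-1}–\eqref{theorem:theoremA-2} I would verify by choosing the $I_s$ to be generated by parameters that avoid a fixed non-maximal prime $\mathfrak p$ containing the relevant ideal $M_{I_s,y_s}$ — this is where the structure of $T$ as a gluing (so that it has a large ``$F$-split locus'') is used.

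The main obstacle I expect is twofold and genuinely interlocking: first, arranging that $T$ is \emph{simultaneously} non-equidimensional, reduced, of depth $\ge 2$, and has its splitting prime (condition \eqref{theorem:theoremA-2}) strictly below $\fm$ — the depth-$\ge 2$ and non-equidimensional requirements pull in opposite directions (non-equidimensionality wants components of different dimension, which tends to depress depth at the gluing locus), so the gluing locus must be chosen with care, likely forcing $\dim T \ge 3$; and second, controlling the divisor class group of the constructed domain $A$ so that it is actually a UFD, since the Loepp–Rotthaus-style construction builds $A$ as a directed union of subrings and one must track that \emph{no new height-one primes become non-principal} in the limit while \emph{also} preserving weak $F$-regularity — the two control mechanisms (the one from Theorem B ensuring weak $F$-regularity via the socle/tight-closure conditions, and the one ensuring $\Cl(A) = 0$) must be run in tandem without interfering. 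I would handle this by first fixing $T$ and the data $\{I_s\}, \{y_s\}, \mathfrak p$ making Theorem B applicable, then re-examining the proof of Theorem B (\cref{theorem:f-regular-precompletion-non-gor,theorem:f-regular-precompletion-non-gor-noncatenary}) to see that the approximation subrings can be enlarged at each stage to also kill any given non-principal height-one ideal class, exactly as in the classical constructions of noncatenary normal UFDs, and checking that these enlargements are compatible with the constraints already imposed; the upshot is a single directed union $A = \bigcup A_i$ that is at once a noncatenary (hence non-Cohen–Macaulay) weakly $F$-regular local domain with $\wh A \cong T$ and $\Cl(A) = 0$, i.e. a UFD.
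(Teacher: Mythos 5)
Your proposal does not match the paper's approach, and it runs into a genuine obstruction that the paper explicitly flags as open. You route everything through non-equidimensionality of $T$ (to force $A$ noncatenary, hence non-Cohen--Macaulay) while simultaneously trying to arrange $\Cl(A)=0$ by enlarging the approximating subrings. But the paper's machinery for producing UFD precompletions (the N-subring construction of \cref{section:gor}, culminating in \cref{theorem:p-subring-ufd-precompletion} and \cref{theorem:f-regular-precompletion-ufd}) requires $T$ to be Gorenstein; Gorenstein rings are Cohen--Macaulay, and completion preserves depth and dimension, so that machinery can never yield a non-Cohen--Macaulay $A$. You correctly sense that the two control mechanisms ``must be run in tandem,'' but making that work in the non-Gorenstein / non-equidimensional setting is precisely the second open question the authors pose in \cref{section:questions} (``Does there exist a noncatenary weakly $F$-regular ring which is also a UFD?''). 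Your plan, if it worked, would settle that question; as written it is not a proof, because the divisor-class-group step is asserted rather than established, and the available UFD-forcing lemmas in the paper do not apply. Note also that your candidate $T$ is non-equidimensional, hence has multiple minimal primes, hence is not a domain and a fortiori not a UFD; so you cannot fall back on the cheap implication ``$\wh A$ a UFD $\Rightarrow$ $A$ a UFD.''

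The paper sidesteps all of this. It takes $T$ to be a complete local $F$-pure UFD with $\depth T\geq 3$ that is \emph{already} non-Cohen--Macaulay (the explicit example is the completion of $\sk[x_1,x_2,x_3,x_4]^{\Z/4\Z}$ in characteristic $2$, which is an $F$-pure UFD of depth $3$ and dimension $4$). It then applies \cref{corollary:f-regular-power-series-precompletion-non-gor} --- the power-series variant, which builds in the splitting-prime condition automatically via $\fp = \fm T\llbracket X\rrbracket$ --- to get a weakly $F$-regular local domain $A$ with $\wh A \cong T\llbracket X\rrbracket$. Since $\depth T\geq 3$, Scheja's theorem gives that $T\llbracket X\rrbracket$ is again a UFD, and then $A$ is a UFD by the Bourbaki fact that a Noetherian local ring whose completion is a UFD is itself a UFD. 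Non-Cohen--Macaulayness of $A$ is immediate because depth and dimension pass through completion. No class-group control on $A$ is needed at all, and no appeal to noncatenarity is made --- the non-Cohen--Macaulay property comes straight from $T$, not from a dimension mismatch among components.
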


We can modify the setting of \cref{maintheorem:precompletion-1} slightly to ask which complete local rings (not necessarily UFDs) are completions of weakly $F$-regular local UFDs. Our methods for accomplishing this require us to additionally assume that $T$ is Gorenstein:

\begin{Theoremx}\label{maintheorem:precompletion-2} (= \cref{theorem:f-regular-precompletion-ufd})
     Let $(T,\fm)$ be a complete local reduced Gorenstein ring with $\dim T\geq 2$ of prime characteristic $p>0$. Let $I\subseteq T$ be a parameter ideal and let $y\in T$ generate the socle of $T/I$. Suppose that 
\begin{enumerate}
    \item $y^q\not\in I^{[q]}$ for all $q=p^e\gg 0$;
    \item there exists a nonmaximal prime $\fp\in\Spec T$ such that $y\not\in\fp$ and $M_{I,y}\subseteq\fp$.
\end{enumerate}
Then there exists an $F$-regular local UFD $(A,\fm\cap A)$ such that $\wh{A}\cong T$.
\end{Theoremx}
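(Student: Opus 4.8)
The plan is to build $A$ by the standard Loepp-style approximation technique, constructing a nested sequence of subrings of $T$ whose direct limit $A$ is a local domain with $\widehat{A}\cong T$, and then arrange along the way that $A$ is a UFD and is $F$-regular. Since $T$ is Gorenstein, the approximately Gorenstein sequence may be taken to be $\{I^{[p^e]}\}$ (or $\{I^{n}\}$), and \cref{maintheorem:precompletion-1}'s hypotheses (1) and (2) are exactly the single-ideal versions assumed here, with the added requirement $y\notin\fp$; so the first step is simply to invoke the machinery behind \cref{theorem:f-regular-precompletion-non-gor} to produce a weakly $F$-regular local domain $A$ with completion $T$. The point of the present theorem is to upgrade this to an $F$-regular UFD, so the real work is in the two extra features.

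For the UFD property, the strategy is to control $\operatorname{Cl}(\widehat{A})=\operatorname{Cl}(T)$ versus $\operatorname{Cl}(A)$. Since $T$ is Gorenstein (hence its local rings are Gorenstein, in particular $(S_2)$ and the punctured spectrum is far from being a problem), one wants to run the construction of $A$ so that $A$ is not merely a domain but satisfies Serre's condition so that divisor-class considerations make sense, and then kill the class group by a Nagata-type argument: at each stage of the direct-limit construction one adjoins enough elements so that every height-one prime of the limit is principal. Concretely, I would interleave the $F$-regularity-forcing step (adjoining elements that witness non-triviality of tight closure being avoided, i.e. keeping $y^q\notin I^{[q]}$ and $M_{I,y}\subseteq\fp$ so that the splitting-prime condition persists in $A$) with a step that, for each nonprincipal height-one prime appearing, passes to a localization-and-adjunction making it principal — this is the classical technique (used by Heitmann and by Loepp) for forcing the completion of a UFD to be an arbitrary complete local ring with the appropriate depth/connectedness hypotheses. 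The hypothesis $\dim T\ge 2$ and $T$ reduced ensures the generic formal fibre is manageable and that $A$ can be taken to be a domain; the Gorenstein hypothesis is what lets us conclude $A$ is $(S_2)$ hence that $\operatorname{Cl}(A)$ is the right object to kill.

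For $F$-regularity (as opposed to just weak $F$-regularity), the idea is that for Gorenstein rings weak $F$-regularity is equivalent to strong $F$-regularity under $F$-finiteness, but $A$ here need not be $F$-finite; instead one uses that a Gorenstein weakly $F$-regular local ring is $F$-rational, and $F$-rational Gorenstein rings are $F$-regular (indeed the test ideal equals the whole ring and localizes), or alternatively one shows directly that weak $F$-regularity of the constructed $A$ localizes because the single socle element $y$ mod $I$ serves as a uniform test element whose behaviour is controlled by hypothesis (1) at all the relevant primes — hypothesis $y\notin\fp$ guarantees the test element survives in the localization at $\fp$ and at every prime contained in it. So the logical skeleton is: (a) $T$ Gorenstein $\Rightarrow$ $T/I$ Gorenstein Artinian with $1$-dimensional socle spanned by $y$; (b) hypotheses (1),(2) let the \cref{maintheorem:precompletion-1} construction go through, yielding weakly $F$-regular local domain $A$ with $\widehat A=T$; (c) run the construction with the extra UFD-forcing adjunctions, using $\dim\ge 2$ and Gorenstein-ness of $T$ to keep $A$ a domain, $(S_2)$, and excellent-enough on the relevant fibres; (d) deduce $\operatorname{Cl}(A)=0$, so $A$ is a UFD; (e) deduce $F$-regularity from weak $F$-regularity plus the Gorenstein ($\Rightarrow F$-rational) structure and the surviving test element $y$.

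I expect the main obstacle to be step (c)/(d): simultaneously forcing the UFD property \emph{and} preserving the delicate tight-closure-theoretic conditions (1) and (2) across the infinitely many adjunction stages. Each UFD-forcing step enlarges the ring and could in principle destroy the property "$y^q\notin I^{[q]}$" or move $M_{I,y}$ out of $\fp$; one must check that the adjoined elements can always be chosen in $\fp$ (or outside the relevant ideals) so that both the class-group-killing and the $F$-regularity-witnessing conditions are maintained in the limit. Making these two transfinite bookkeeping processes compatible — essentially, showing the intersection of the two families of "good" choices is always nonempty at each stage — is the crux, and is where the hypothesis $y\notin\fp$ (not needed in \cref{maintheorem:precompletion-1}) earns its keep, since it decouples the test element from the prime we are localizing against.
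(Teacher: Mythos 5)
Your high-level plan matches the paper's --- build $A$ as a union of small quasi-local subrings of $T$ with controlled intersections, force $\widehat A\cong T$, use the approximately Gorenstein sequence $\{I^{[p^e]}\}$ and the hypotheses (1), (2) to get weak $F$-regularity, and separately arrange the UFD property --- but the mechanism you propose for the UFD step is not what the paper does and, as stated, has a gap.

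You propose to make $A$ a UFD by a Nagata-type class-group-killing argument, repeatedly adjoining elements so that height-one primes become principal in the limit. This presupposes that the intermediate rings (or at least $A$) are normal, or at a minimum Krull, so that $\operatorname{Cl}(-)$ is the right invariant to kill; nothing in the subring machinery guarantees this, and you explicitly flag as unresolved the problem of keeping this process compatible with the tight-closure constraints $y^q\notin I^{[q]}$ and $M_{I,y}\subseteq\fp$. That compatibility problem is exactly where the paper's argument diverges: instead of killing a class group a posteriori, the paper works with Heitmann's \emph{N-subrings} --- quasi-local UFD subrings $R\subseteq T$ satisfying, in addition to $R\cap Q=(0)$ for $Q\in\Ass(T)$, the condition that $\Height(R\cap J)\le 1$ whenever $J\in\Ass(T/tT)$ for $t$ a regular element. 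Every ring in the ascending chain is a UFD by definition; the adjunction lemmas (\cref{lemma:N-subring-transcendental}, \cref{theorem:N-subring-c-in-IS}, \cref{lemma:N-subring-adjoin-coset}) are engineered so that prime elements of $R_\beta$ remain prime in $R_\alpha$ for $\beta<\alpha$, and \cref{lemma:N-subring-unioning} then says the union is again a UFD. There is no separate normality or $\operatorname{Cl}=0$ step; the UFD property is built in throughout and never has to be rescued. This is also why $\depth T\ge 2$ appears (needed for \cref{lemma:N-subring-adjoin-coset}) rather than a ``$(S_2)$'' argument about $A$.

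Two smaller points. First, the passage from weak $F$-regularity to $F$-regularity requires no detour through $F$-rationality, test elements, or $F$-finiteness: the paper simply cites \cite[Corollary 4.7(a)]{HH94}, which says the two notions coincide for Gorenstein local rings, and applies it because $A$ is Gorenstein (being a precompletion of the Gorenstein ring $T$). Second, you emphasize the hypothesis $y\notin\fp$ as the place where the real work happens, but this is a red herring: in both \cref{theorem:f-regular-precompletion-non-gor} and \cref{theorem:f-regular-precompletion-ufd} the proof begins by replacing $y$ with $y+z$ for suitable $z\in I$ (prime avoidance) to arrange that $y$ is a regular element of $T$ lying outside $\fp$, and this replacement changes neither $M_{I,y}$ nor the socle of $T/I$ nor the condition $y^q\notin I^{[q]}$. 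So $y\notin\fp$ is a normalization that can always be assumed, not an extra resource that ``decouples'' the two bookkeeping processes.
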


A few remarks are in order regarding the assumptions of \cref{maintheorem:precompletion-1,maintheorem:precompletion-2}. First, the reducedness assumption is necessary. Indeed, if $(A,\fn)$ is a (not necessarily excellent) weakly $F$-regular local ring, then $A$ is $F$-pure by \cite[Remark 1.6]{FW89}. It follows that $\wh{A}$ is $F$-pure and in particular is reduced. Additionally, in the situation above, since $\wh{A}$ is $F$-pure we know that $I=I^F$ for every ideal $I\subseteq A$. Here, $I^F$ denotes the Frobenius closure of $I$, and this equality of ideals forces condition (\ref{theorem:theoremA-1}) to hold in both theorems.

\cref{maintheorem:precompletion-1,maintheorem:precompletion-2} are generalizations of earlier work by the first named author and Rotthaus. For example, the statement of \cite[Theorem 23]{LR01} is similar to \cref{maintheorem:precompletion-2} in that $T$ is still assumed to be Gorenstein, but $T$ is required to be a domain and the techniques of \emph{op. cit.} are not sufficiently flexible to conclude that $A$ is a UFD.

\subsection*{Notational conventions and organization of the article}
In this article, a \emph{quasi-local} ring $(R,\fm)$ is taken to mean a commutative ring $R$ with unit and unique maximal ideal $\fm$. We say $R$ is \emph{local} if $R$ is quasi-local and Noetherian. $\min(R)$ refers to the set of minimal prime ideals of $R$, and $\Spec^\circ(R)$ denotes the punctured spectrum $\Spec(R)\setminus\{\fm\}$. Given a complete local ring $(T,\fm)$, we occasionally use the terminology \emph{precompletion of $T$} to refer to any local subring $(A,\fm\cap A)$ of $T$ such that the $\fm\cap A$-adic completion $\wh{A}\cong T$. We denote by $|B|$ the cardinality of a set $B$, and we say that $B$ is countable if it is either finite or in bijection with the natural numbers.

In \cref{section:non-gor} we prove the first statement of \cref{maintheorem:precompletion-1} and derive \cref{maintheorem:noncm} as a consequence. In \cref{section:gor} we prove \cref{maintheorem:precompletion-2}, followed by a specific example of a weakly $F$-regular UFD whose completion is not weakly $F$-regular using a construction of \cite{HRW97}. In \cref{section:noncatenary} we first prove \cref{maintheorem:noncatenary} using the results of \cref{section:non-gor}, and then finish the proof of \cref{maintheorem:precompletion-1}. We conclude the article with a list of questions in \cref{section:questions}.

\subsection*{Acknowledgments} We are grateful to Anurag Singh for helpful discussions concerning \cref{corollary:fermat-quintic}.

\section{Weakly \texorpdfstring{$F$}{F}-regular precompletions}\label{section:non-gor}
The goal of this section is to prove \cref{maintheorem:precompletion-1}, modulo the statement concerning (non)catenarity. Critical to our proof is Hochster's notion of an \emph{approximately Gorenstein ring}:

\begin{definition}\label{definition:approximately Gorenstein}
    A local ring $(R,\fm,\sk)$ is said to be \textit{approximately Gorenstein} if there exists a descending chain of $\fm$-primary ideals $I_1\supseteq I_2\supseteq\cdots\supseteq I_t\supseteq\cdots$ such that $R/I_t$ is a Gorenstein local ring for all $t$, and for every $N>0$, $I_t\subseteq \fm^N$ for all $t\gg 0$. We call such a chain $\{I_t\}$ an \textit{approximately Gorenstein sequence}.
\end{definition}

Note that the condition that the rings $R/I_t$ are Gorenstein is equivalent to $\dim_\sk \Soc(R/I_t)=1$. It is known that every excellent reduced ring is approximately Gorenstein, as is every local ring $R$ with $\depth(R)\geq 2$ \cite[Theorem 1.7]{Hoc77}. Moreover, a (not necessarily excellent) local ring $(R,\fm)$ is approximately Gorenstein if and only if $\wh{R}$ is \cite[Corollary 2.2(c)]{Hoc77}. In particular, any analytically unramified local ring is approximately Gorenstein.

Starting with a complete local approximately Gorenstein ring $T$ as in \cref{maintheorem:precompletion-1}, we will manufacture our weakly $F$-regular precompletion $A$ by ensuring that it contains the socle generators of the approximately Gorenstein sequence up to multiplying by a unit of $T$. We also ensure that $A$ contains no zerodivisors of $T$, which guarantees that $A$ is a domain. Finally, we construct $A$ so that it contains no nonzero elements of a given particular nonmaximal prime ideal $\fp$ of $T$. Constructing this ring $A$ is the goal of the next several results.

We start with the definiton of a $\fp$-subring of $T$, taken from \cite{Loe98}. Generally speaking, a $\fp$-subring of $T$ is a quasi-local subring of $T$ that is either countable or "small" compared to the residue field of $T$, contains no zerodivisors of $T$ and contains no nonzero elements of a chosen nonmaximal prime ideal $\fp$ of $T$.

\begin{definition} \cite[Definition 5]{Loe98}
Let $(T,\fm)$ be a complete local ring and let $(R,R \cap \fm)$ be a quasi-local subring of $T$.  Let $\fp \neq \fm$ be a prime ideal of $T$.  Suppose
\begin{enumerate}
\item $|R| \leq \sup(\aleph_0, |T/\fm|)$ with equality implying $T/\fm$ is countable, \label{item:p-subring-1}
\item $R \cap \fp = (0)$, and \label{item:p-subring-2}
\item $R \cap \fq = (0)$ for every $\fq \in \Ass(T)$.\label{item:p-subring-3}
\end{enumerate}
Then we call $R$ a \emph{$\fp$-subring of $T$}.
\end{definition}

Before we begin our construction, we state two useful lemmas showing that, under certain conditions, if $y$ is a nonunit of $T$ and $C$ is a set of nonmaximal prime ideals of $T$ with $y \not\in \fq$ for all $\fq \in C$, then we can find an associate of $y$ that avoids a selected set of cosets of the elements of $C$. We use \cref{lemma:countable-prime-avoidance} in the case that our $\fp$-subring is countable, and \cref{lemma:uncountable-prime-avoidance} in the case that it is not countable.

\begin{lemma}\label{lemma:countable-prime-avoidance} \cite[Lemma 16]{LR01}
Let $(T,\fm)$ be a complete local ring and let $C$ be a countable set of nonmaximal prime ideals of $T$.  Let $D$ be a countable set of elements of $T$.  Let $y \in \fm$ such that $y \not\in \fq$ for all $\fq \in C$.  Then there exists a unit $t$ of $T$ such that
$$yt \not\in \bigcup \{\fq + r \, | \,\fq \in C, \, r \in D\}.$$
\end{lemma}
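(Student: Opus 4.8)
The plan is to produce $t$ as the $\fm$-adic limit of a sequence of units of $T$, destroying one forbidden coset at a time. This succeeds because $T$ is $\fm$-adically complete (and $\bigcap_n\fm^n=0$ by Krull, so Cauchy sequences have unique limits), because each $\fq\in C$ is $\fm$-adically closed (apply Krull's intersection theorem in the complete local ring $T/\fq$, whose maximal ideal is $\fm/\fq$), and because $(\fq:_T y)=\fq$ for every $\fq\in C$, since $y\notin\fq$ and $\fq$ is prime. Since $C$ and $D$ are countable, so is $\{(\fq,r):\fq\in C,\ r\in D\}$; enumerate it as $(\fq_1,r_1),(\fq_2,r_2),\dots$ (if this set is finite the argument only simplifies).

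I would recursively build units $t_0=1,t_1,t_2,\dots$ and integers $N_0\le N_1\le\cdots$ with $N_n\ge n+1$ such that $t_{n+1}-t_n\in\fm^{N_n}$, such that $yt_n\notin\fq_i+r_i$ for every $i\le n$, and such that $\big((yt_n-r_i)+\fm^{N_n}\big)\cap\fq_i=\varnothing$ for every $i\le n$ --- the last condition being attainable after $t_n$ is known, since each $\fq_i$ is closed and $yt_n-r_i\notin\fq_i$. For the base case take $t_0=1$ and $N_0=1$ (the conditions for $i\le 0$ are vacuous). Given $t_n$ and $N_n$, the only new requirement on $t_{n+1}=t_n+\delta$ with $\delta\in\fm^{N_n}$ is that $(yt_n-r_{n+1})+y\delta\notin\fq_{n+1}$: the set of $\delta\in T$ violating this is, if nonempty, a single coset of $(\fq_{n+1}:_T y)=\fq_{n+1}$, and $\fm^{N_n}$ lies in no coset of $\fq_{n+1}$ --- a containment $\fm^{N_n}\subseteq s+\fq_{n+1}$ would force $s\in\fq_{n+1}$ (as $0\in\fm^{N_n}$) and hence $\fm^{N_n}\subseteq\fq_{n+1}$, contradicting $\fq_{n+1}\neq\fm$ --- so a suitable $\delta$ exists. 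Then $t_{n+1}$ is again a unit ($\delta\in\fm$), the constraints for $i\le n$ persist because $y\delta\in\fm^{N_n}$, and one finally picks $N_{n+1}\ge\max\{N_n,n+2\}$ as above for $t_{n+1}$.

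Setting $t=\lim_n t_n\in T$, for each fixed $n$ the tail $t-t_n=\sum_{k\ge n}(t_{k+1}-t_k)$ lies in the closed ideal $\fm^{N_n}$, so $yt-r_i=(yt_n-r_i)+y(t-t_n)\notin\fq_i$ for all $i\le n$; letting $n$ vary gives $yt\notin\fq+r$ for all $\fq\in C$, $r\in D$, while $t\equiv t_0=1\pmod{\fm}$ shows $t$ is a unit. The one point requiring care --- and the only place the hypotheses on the $\fq$'s enter beyond primality --- is the mismatch of topologies: a coset $\fq+r$ of a nonmaximal prime need not sit inside a high power of $\fm$, so one cannot simply perturb within $\fm^N$ and forget previously handled cosets. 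The $\fm$-adic closedness of each $\fq$ repairs this by furnishing the ``safety radius'' $\fm^{N_n}$. Equivalently, the whole statement can be read off from the Baire category theorem: each coset of a nonmaximal prime is a closed nowhere-dense subset of the complete metric space $T$, whereas the units form a nonempty open --- hence Baire --- subspace, so they are not covered by the countably many such cosets (and the reduction to this covering statement is exactly the observation that $\{t:yt\in\fq+r\}$ is, if nonempty, a coset of $\fq$).
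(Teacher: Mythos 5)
The paper does not prove this lemma---it is quoted verbatim from Loepp--Rotthaus \cite[Lemma~16]{LR01}---so there is no in-paper argument to compare against. Your $\fm$-adic iterative construction is correct and is the standard technique behind such countable avoidance lemmas in this literature: the reduction of the bad set of $\delta$'s to a single coset of $(\fq:_T y)=\fq$, the $\fm$-adic closedness of each $\fq$ (Krull's intersection theorem in $T/\fq$), the ``safety radius'' $\fm^{N_n}$ that keeps previously handled cosets avoided in the limit, and the observation that $\fm^{N_n}$ cannot lie inside a coset of a nonmaximal prime all check out, and the concluding Baire-category reformulation is an accurate and clean way to package the same idea.
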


\begin{lemma} (cf. \cite[Lemma 4]{Loe97})\label{lemma:uncountable-prime-avoidance}
Suppose $(T,\fm)$ is a local ring such that $T/\fm$ is infinite.  Let $C$ be a set of prime ideals of $T$ and let $y \in T$ such that $y \not\in \fq$ for all $\fq \in C$.  Let $D$ be a subset of $T$.  If $|C \times D| < |T/\fm|$, then there exists a unit $t$ of $T$ such that
$$yt \not\in \bigcup \{\fq + r \, | \,\fq \in C, \, r \in D\}.$$
\end{lemma}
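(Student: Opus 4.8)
The plan is to show that the union we must avoid is, in a suitable sense, too small to cover the full collection of associates $\{yt : t \in T^\times\}$, and then to conclude by a counting argument comparing cardinalities against $|T/\fm|$. First I would reduce to understanding, for a fixed pair $(\fq, r)$ with $\fq \in C$ and $r \in D$, the set of units $t$ for which $yt \in \fq + r$. The key observation is that if $yt_1$ and $yt_2$ both lie in the coset $\fq + r$, then $y(t_1 - t_2) \in \fq$, and since $y \notin \fq$ and $\fq$ is prime we get $t_1 - t_2 \in \fq$; hence the ``bad'' units for the pair $(\fq, r)$ all lie in a single coset of $\fq$ in $T$ (if there are any at all). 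So the set of bad units coming from $(\fq, r)$, pushed into $T/\fq$, is a single element, and consequently the set of bad residues modulo $\fq$ as $(\fq,r)$ ranges over $C \times D$ with $\fq$ fixed has cardinality at most $|D| < |T/\fm| \le |T/\fq|$ (the last inequality since $\fq \subseteq \fm$ gives a surjection $T/\fq \twoheadrightarrow T/\fm$).

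Next I would leverage that $T/\fm$ is infinite, hence $T/\fq$ is infinite for every $\fq \in C$, to guarantee that for each fixed $\fq$ there is a residue class modulo $\fq$ which is not bad for any $r \in D$; but more is needed, because we must avoid bad cosets for all $\fq \in C$ simultaneously while staying inside the unit group. Here I would proceed by transfinite induction (or a direct cardinality count) on the set $C \times D$, which by hypothesis has cardinality $\kappa := |C \times D| < |T/\fm|$. At each stage, having avoided countably-or-fewer-than-$\kappa$ many bad cosets so far, the complement still has cardinality $|T/\fm|$ since $\kappa < |T/\fm|$ and $T/\fm$ is a field, and in a field of cardinality $\lambda$ the complement of fewer than $\lambda$ cosets of the zero ideal is nonempty — indeed of size $\lambda$. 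The subtlety is that the cosets live in different quotients $T/\fq$, so I would instead phrase the count directly in $T$: the full set of elements to avoid is $\bigcup\{\fq + r : \fq \in C, r \in D\}$, but this is not the right thing to count since each $\fq + r$ is large. The correct move is to work with the candidate associates: the map $T^\times \to T$, $t \mapsto yt$, and ask when its image meets $\bigcup\{\fq+r\}$.

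The cleanest route, which I would actually write up, is: pick any unit $u \in T$ (e.g. $u=1$); I claim that for all but $< |T/\fm|$ many elements $\lambda$ of a coset-representative set for $\fm$ in $T$... — more carefully, use that $T \to T/\fm$ is surjective and choose $t$ of the form $1 + m$ is not general enough, so instead I would use the following standard device: since $|T/\fm|$ is infinite and $|C \times D| < |T/\fm|$, and each pair $(\fq,r)$ forbids $t$ from lying in at most one coset of $\fq$, and each such coset of $\fq$ meets the unit group $T^\times$ in a set whose image in $T/\fm$ is a single coset of $\fm/\fm = \{0\}$ — i.e. a single element of $T/\fm$ or empty — the total set of forbidden residues in $T/\fm$ has size $\le |C\times D| < |T/\fm|$. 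Choosing $\bar t \in T/\fm$ outside this forbidden set (possible since $T/\fm$ is a field larger than the forbidden set), any lift $t \in T$ of $\bar t$ is automatically a unit (as $\bar t \ne 0$) and satisfies $yt \notin \fq + r$ for every pair, because $yt \in \fq+r$ would force $t$ into the unique bad coset of $\fq$, whose image in $T/\fm$ is the forbidden residue we avoided. I expect the main obstacle to be exactly this last point — verifying that ``$t$ lies in a fixed coset of $\fq$'' implies ``$\bar t$ lies in a fixed coset of $\fm$,'' which is immediate since $\fq \subseteq \fm$ so the coset of $\fq$ maps into a single coset of $\fm$ — and making sure the cardinality bookkeeping ($|C \times D| < |T/\fm|$, $T/\fm$ infinite, field structure) is deployed correctly; compared with \cite[Lemma 4]{Loe97} the only new wrinkle is allowing $C$ to be an arbitrary (not necessarily prime-avoidance-via-countability) set, which is handled purely by the cardinality hypothesis.
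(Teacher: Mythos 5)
Your argument is correct and is the standard proof of this statement (the paper does not reproduce a proof, citing \cite[Lemma 4]{Loe97} instead). The key steps you identify are exactly the right ones: for a fixed pair $(\fq,r)$, if $yt_1, yt_2 \in \fq+r$ then $y(t_1-t_2)\in\fq$, so primality of $\fq$ together with $y\notin\fq$ forces $t_1-t_2\in\fq$, hence the bad set $\{t\in T : yt\in\fq+r\}$ is either empty or a single coset $t_0+\fq$; since $\fq\subseteq\fm$ that coset projects to a single element of $T/\fm$; so the forbidden residues in $T/\fm$ number at most $|C\times D| < |T/\fm|$, and because $T/\fm$ is an infinite field one can choose a nonzero residue outside this set and lift it to a unit $t$. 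One stylistic note: the middle of your write-up contains several abandoned false starts (``more carefully,'' ``is not general enough,'' etc.) that should be excised --- the paragraph beginning ``The cleanest route'' is the actual proof and stands on its own.
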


To construct $A$, we build an increasing chain of $\fp$-subrings of $T$. To get from one $\fp$-subring to the next one, we adjoin a carefully chosen element of $T$ such that the resulting subring of $T$ itself is a $\fp$-subring of $T$. \cref{lemma:transcendental} gives sufficient conditions on an element $x$ of $T$ so that, if $R$ is a $\fp$-subring of $T$, then $R[x]_{(R[x] \cap \fm)}$ is also a $\fp$-subring of $T$. Later, we use \cref{lemma:countable-prime-avoidance,lemma:uncountable-prime-avoidance} to find elements of $T$ that satisfy these sufficient conditions.

\begin{lemma}\label{lemma:transcendental}
Let $(T,\fm)$ be a complete local ring and let $\fp$ be a nonmaximal prime ideal of $T$. Let $(R,R \cap \fm)$ be a $\fp$-subring of $T$ and let $C = \{\fp\} \cup \Ass(T)$.  If for every $\fq \in C$ we have that $x + \fq \in T/\fq$ is transcendental over $R/(R \cap \fq) \cong R$, then $S = R[x]_{(R[x] \cap \fm)}$ is a $\fp$-subring of $T$.  Moreover, if $R$ is finite, then $S$ is countably infinite, while if $R$ is infinite, then $|S|= |R|.$
\end{lemma}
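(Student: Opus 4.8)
The plan is to verify the three defining conditions of a $\fp$-subring for $S = R[x]_{(R[x]\cap\fm)}$, using the transcendence hypothesis to control each one, and then to read off the cardinality statement from the structure of $S$ as a localization of a polynomial ring. First I would record the cardinality bookkeeping: since $x$ is transcendental over $R$ modulo each $\fq\in C$, in particular $x$ is transcendental over $R$ itself (take any $\fq$, e.g. a minimal prime; injectivity of $R\hookrightarrow R/(R\cap\fq)$ holds because $R\cap\fq=(0)$ by conditions \eqref{item:p-subring-2}–\eqref{item:p-subring-3}), so $R[x]$ is a genuine polynomial ring over $R$. Hence $|R[x]| = \sup(\aleph_0,|R|)$, and localizing does not change cardinality, so $|S| = \sup(\aleph_0,|R|)$; this gives both "$R$ finite $\Rightarrow$ $S$ countably infinite" and "$R$ infinite $\Rightarrow$ $|S|=|R|$" at once. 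Combined with condition \eqref{item:p-subring-1} for $R$, this yields $|S|\le\sup(\aleph_0,|T/\fm|)$, and if equality holds then $|R|$ is uncountable (since $|S|=\sup(\aleph_0,|R|)$), so $|R|=|S|=\sup(\aleph_0,|T/\fm|)$ forces $T/\fm$ countable — wait, that is vacuous; more carefully, equality $|S|=\sup(\aleph_0,|T/\fm|)$ with $|S|=\sup(\aleph_0,|R|)$ gives $\sup(\aleph_0,|R|)=\sup(\aleph_0,|T/\fm|)$, and I must then invoke the equality clause for $R$ only when $T/\fm$ is itself uncountable, in which case $|R|=|T/\fm|$ already forced $T/\fm$ countable — a contradiction unless the sup is $\aleph_0$; so the equality clause for $S$ holds because it can only be triggered when $T/\fm$ is countable. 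I would phrase this last point cleanly rather than as a case chase.

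Next, conditions \eqref{item:p-subring-2} and \eqref{item:p-subring-3}: I must show $S\cap\fq=(0)$ for every $\fq\in C=\{\fp\}\cup\Ass(T)$. It suffices to show $R[x]\cap\fq=(0)$, since an element of $S$ that lies in $\fq$ can be written as $a/u$ with $a\in R[x]$, $u\in R[x]\setminus\fm$, and then $a\in\fq$ (as $\fq$ is prime and $u\notin\fq$ — note $u\notin\fm\supseteq\fq$ since $\fq$ is nonmaximal, using $\fp\ne\fm$ and $\Ass(T)$ consisting of nonmaximal primes... actually associated primes of $T$ need not be nonmaximal in general, but if $\fm\in\Ass(T)$ then $R\cap\fm$ would contain a nonzero element forcing $|R|$ issues — more to the point, condition \eqref{item:p-subring-3} for $R$ already presupposes $R\cap\fq=(0)$ which is automatic; I will just use that $u$ is a unit of the local ring $S$, hence $u\notin\fq S$). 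So fix $\fq\in C$ and suppose $f = r_0 + r_1 x + \cdots + r_n x^n \in R[x]\cap\fq$ with $r_i\in R$. Reducing mod $\fq$, the image $\bar f = \sum \bar r_i \bar x^i = 0$ in $T/\fq$. Since $\bar x$ is transcendental over the subring $R/(R\cap\fq)$, all coefficients $\bar r_i = 0$ in $T/\fq$, i.e. $r_i \in R\cap\fq = (0)$, so $f=0$. This is the crux of the argument and it is short; the only care needed is the identification $R/(R\cap\fq)\cong R$ and the observation that $R[x]$ maps to $T/\fq$ compatibly.

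Finally I would note $S$ is quasi-local by construction (it is a localization of $R[x]$ at the prime $R[x]\cap\fm$), and $(S, S\cap\fm)$ is a subring of $T$ with $S\cap\fm$ its maximal ideal since $S\cap\fm = (R[x]\cap\fm)S$. I expect essentially no serious obstacle here: the lemma is a packaging result and every step is routine once the transcendence hypothesis is unwound. The one place to be slightly careful is the cardinality equality clause in condition \eqref{item:p-subring-1} — tracking when it can be "triggered" and ensuring it propagates from $R$ to $S$ — and making sure that in conditions \eqref{item:p-subring-2}–\eqref{item:p-subring-3} the passage from $R[x]$ to its localization $S$ genuinely introduces no new elements of $\fq$, which follows because units of $S$ lie outside every $\fq\in C$ (as each such $\fq$ is contained in $\fm$). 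I would present the argument in the order: (i) $x$ transcendental over $R$, $R[x]$ a polynomial ring; (ii) $R[x]\cap\fq=(0)$ for $\fq\in C$ via comparing coefficients; (iii) localize to get $S$, check it is a $\fp$-subring; (iv) cardinality count.
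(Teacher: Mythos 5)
Your proof is correct and takes essentially the same approach as the paper: the key step is the coefficient-comparison argument showing $R[x]\cap\fq=(0)$ for each $\fq\in C$ via transcendence of $x+\fq$ over $R/(R\cap\fq)\cong R$, and the cardinality statement falls out of the polynomial-ring structure (the paper's version is terser, omitting the explicit check of condition (1) and the passage from $R[x]$ to its localization). One small note: the worry about needing $u\notin\fq$ when descending from $R[x]$ to $S$ is unnecessary, since if $s=a/u\in S\cap\fq$ with $a,u\in R[x]$ then $a=su\in\fq$ immediately, $\fq$ being an ideal of $T$, so $R[x]\cap\fq=(0)$ already forces $S\cap\fq=(0)$.
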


\begin{proof}
Note that $x$ is transcendental over $R$ and so if $R$ is finite, then $S$ is countably infinite, while if $R$ is infinite, then $|S| = |R|$. Let $\fq \in C$ and suppose $f \in R[x] \cap \fq$.  Then $f = r_n(x)^n +  \cdots + r_1(x) + r_0 \in \fq$ where $r_i \in R$ for all $i \in \{1,2, \ldots,n\}$.  Since $x + \fq \in T/\fq$ is transcendental over $R/(R \cap \fq) \cong R$, we have that $r_i \in R \cap \fq = (0)$ for all $i \in \{1,2, \ldots,n\}$, and it follows that $S \cap \fq = (0)$ for all $\fq \in C$. Thus, $S$ is a $\fp$-subring of $T$.
\end{proof}

\cref{lemma:yt-in-ring} should be compared with \cite[Lemma 18]{LR01}, which further assumes that $T$ is a domain. In fact, the statement of the lemma, as well as the proof are based on \cite[Lemma 18]{LR01}. The importance of \cref{lemma:yt-in-ring} is that it shows, given a particular element $y$ of $T$, there exists a $\fp$-subring of $T$ that contains an associate of $y$. We use the result repeatedly to ensure that our ring $A$ contains the desired socle generators (up to a unit) of an approximately Gorenstein sequence.

\begin{lemma}\label{lemma:yt-in-ring}
Let $(T,\fm)$ be a complete local ring and let $\fp$ be a nonmaximal prime ideal of $T$.  Let $y \in \fm \setminus \fp$ be a regular element of $T$, and let $(R, R \cap \fm)$ be a $\fp$-subring of $T$.  Then there exists a $\fp$-subring $(S, S \cap \fm)$ of $T$ such that $R \subseteq S \subseteq T$ and $yt \in S$ for some unit $t$ of $T$.  Moreover, if $R$ is finite, then $S$ is countably infinite, while if $R$ is infinite, then $|S| = |R|$.
\end{lemma}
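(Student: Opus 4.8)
The plan is to reduce everything to \cref{lemma:transcendental}: it will suffice to produce a single unit $t$ of $T$ with the property that, setting $C=\{\fp\}\cup\Ass(T)$, the image $yt+\fq$ is transcendental over $R/(R\cap\fq)\cong R$ for every $\fq\in C$. Granting this, \cref{lemma:transcendental} applied to $x=yt$ immediately gives that $S:=R[yt]_{(R[yt]\cap\fm)}$ is a $\fp$-subring of $T$ with $R\subseteq S\subseteq T$ and $yt\in S$, and also yields the cardinality statement ($S$ countably infinite when $R$ is finite, $|S|=|R|$ when $R$ is infinite). Observe that $C$ is finite, since $T$ is Noetherian, and that $y\notin\fq$ for all $\fq\in C$: this is the hypothesis when $\fq=\fp$, and when $\fq\in\Ass(T)$ it holds because $y$ is a regular element while associated primes consist of zerodivisors. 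Hence $y$ meets the hypotheses of the prime-avoidance \cref{lemma:countable-prime-avoidance,lemma:uncountable-prime-avoidance}.

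To pin down which cosets $t$ must dodge, fix $\fq\in C$. Since $R\cap\fq=(0)$ by the definition of a $\fp$-subring, the map $R\to T/\fq$ is injective, so for each nonzero polynomial $P\in R[X]$ its image $\overline{P}\in (T/\fq)[X]$ is again nonzero and therefore has only finitely many roots in the domain $T/\fq$; pick a lift in $T$ of each such root. Let $D\subseteq T$ be the union of all of these lifts, over all $\fq\in C$ and all nonzero $P\in R[X]$. Then $|D|\leq\sup(\aleph_0,|R|)$, using that $|R[X]|=\sup(\aleph_0,|R|)$ and that $C$ is finite. If $R$ is countable --- which by clause (1) of the definition of a $\fp$-subring is automatic whenever $T/\fm$ is countable, and trivially holds when $R$ is finite --- then $D$ is countable and \cref{lemma:countable-prime-avoidance} supplies a unit $t$ of $T$ with $yt\notin\bigcup\{\fq+r : \fq\in C,\ r\in D\}$. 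If instead $R$ is uncountable, then clause (1) forces $T/\fm$ to be uncountable and $|R|<|T/\fm|$, so $|C\times D|\leq\sup(\aleph_0,|R|)=|R|<|T/\fm|$ and \cref{lemma:uncountable-prime-avoidance} produces a $t$ with the same avoidance property.

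It remains to check that this $t$ works. Suppose $yt+\fq$ were algebraic over $R$ for some $\fq\in C$, say $P(yt)\in\fq$ for some nonzero $P\in R[X]$; then $yt+\fq$ is a root of $\overline{P}$ in $T/\fq$, hence $yt\in d+\fq$ for the chosen lift $d\in D$ of that root, contradicting the choice of $t$. So $yt+\fq$ is transcendental over $R\cong R/(R\cap\fq)$ for every $\fq\in C$, and \cref{lemma:transcendental} finishes the argument. The one place demanding care is the cardinality bookkeeping in the uncountable case: one must notice that clause (1) in the definition of a $\fp$-subring already delivers the \emph{strict} inequality $|R|<|T/\fm|$ as soon as $T/\fm$ is uncountable, which is precisely what \cref{lemma:uncountable-prime-avoidance} requires; apart from that, the proof is a direct assembly of the three preceding lemmas.
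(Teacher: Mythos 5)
Your proof is correct and follows essentially the same route as the paper's: set $C=\{\fp\}\cup\Ass(T)$, build $D$ out of coset representatives that would make $yt$ algebraic over $R$ modulo some $\fq\in C$, invoke \cref{lemma:countable-prime-avoidance} or \cref{lemma:uncountable-prime-avoidance} to find the unit $t$, and then apply \cref{lemma:transcendental} to $S=R[yt]_{(R[yt]\cap\fm)}$. The only differences are cosmetic: you spell out that $y\notin\fq$ for $\fq\in\Ass(T)$ because $y$ is regular, you give the slightly sharper and more honest bound $|D|\leq\sup(\aleph_0,|R|)$ (the paper writes $|D|\leq|R|$, which is off in the finite case but immaterial since one then uses the countable lemma), and you explicitly unpack why clause (1) of the $\fp$-subring definition yields the strict inequality $|R|<|T/\fm|$ needed for \cref{lemma:uncountable-prime-avoidance}.
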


\begin{proof}
Define $C = \{\fp\} \cup \Ass(T)$, and note that $C$ is a finite set with $y \not\in \fq$ for all $\fq \in C$.  Let $\fq \in C$ and define $D_{(\fq)} \subset T$ to be a full set of coset representatives of the elements $u + \fq$ of $T/\fq$ that are algebraic over $R/(R \cap \fq) \cong R$.  Define $D = \bigcup_{\fq \in C}D_{(\fq)}$ and note that $|D| \leq |R|.$ Now use \cref{lemma:countable-prime-avoidance} if $R$ is countable and \cref{lemma:uncountable-prime-avoidance} if not to find a unit $t$ of $T$ such that $yt \not\in \bigcup \{\fq + r \, | \,\fq \in C, \, r \in D\}.$ Define $S = R[yt]_{(R[yt] \cap \fm)}$ and note that $R \subseteq S \subseteq T$ and $yt \in S$. By our choice of $t$, $yt + \fq \in T/\fq$ is transcendental over $R/(R \cap \fq) \cong R$ for all $\fq \in C$. It follows by \cref{lemma:transcendental} that $S$ is a $\fp$-subring of $T$ and that if $R$ is finite, then $S$ is countably infinite, while if $R$ is infinite, then $|S| = |R|$. 
\end{proof}

Given an ascending chain of $\fp$-subrings, it will prove useful to be able to conclude that the union of the chain is also a $\fp$-subring. Our next result gives conditions under which this holds. We begin with a useful definition.

\begin{definition}
Let $\Omega$ be a well-ordered set and let $\alpha \in \Omega$.  We define $\gamma(\alpha) = \sup\{\beta \in \Omega \, | \, \beta < \alpha\}$.
\end{definition}

\begin{lemma}\label{lemma:unioning}
Let $(T,\fm)$ be a complete local ring and let $\fp$ be a nonmaximal prime ideal of $T$.  Let $\Omega$ be a well-ordered index set with least element $1$ such that either $\Omega$ is countable or, for every $\alpha \in \Omega$, we have $|\{\beta \in \Omega \, | \, \beta < \alpha \}| < |T/\fm|$. Let $\{(R_{\beta}, R_{\beta} \cap \fm)\}_{\beta \in \Omega}$ be an ascending collection of infinite quasi-local subrings of $T$ such that $R_1$ is a $\fp$-subring of $T$, if $\gamma(\alpha) < \alpha$ then $R_{\alpha}$ is a $\fp$-subring of $T$ with $|R_{\alpha}| = |R_{\gamma(\alpha)}|$ and if $\gamma(\alpha) = \alpha$ then $R_{\alpha} = \bigcup_{\beta < \alpha}R_{\beta}$. Then $S = \bigcup_{\beta \in \Omega} R_{\beta}$ satisfies the conditions to be a $\fp$-subring of $T$ except for possibly the cardinality condition (\ref{item:p-subring-1}), and moreover satisfies the inequality $|S| \leq \sup(|R_1|, |\Omega|).$ 
\end{lemma}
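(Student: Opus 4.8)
The plan is to verify, one condition at a time, that $S=\bigcup_{\beta\in\Omega}R_\beta$ is a quasi-local subring of $T$ with maximal ideal $S\cap\fm$, that $S\cap\fp=(0)$, and that $S\cap\fq=(0)$ for every $\fq\in\Ass(T)$ (i.e.\ properties (\ref{item:p-subring-2}) and (\ref{item:p-subring-3})), and then to bound $|S|$ by a transfinite cardinal count. The hypothesis on $\Omega$ is not actually needed for this statement; it is recorded because in applications one wants to upgrade the conclusion to ``$S$ is a $\fp$-subring'' by separately checking the cardinality condition (\ref{item:p-subring-1}).

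First, $S$ is a subring of $T$, being an ascending union of subrings, and it is quasi-local with maximal ideal $S\cap\fm$: if $x\in S\setminus(S\cap\fm)$ then $x\in R_\beta$ for some $\beta$ with $x\notin R_\beta\cap\fm$, so $x$ is a unit of the quasi-local ring $(R_\beta,R_\beta\cap\fm)$ and hence $x^{-1}\in R_\beta\subseteq S$. Next I would prove by transfinite induction on $\beta\in\Omega$ that $R_\beta\cap\fp=(0)$ and $R_\beta\cap\fq=(0)$ for every $\fq\in\Ass(T)$: the base case $\beta=1$ and the successor case $\gamma(\alpha)<\alpha$ hold because $R_1$, respectively $R_\alpha$, is by hypothesis a $\fp$-subring, while at a limit stage $\gamma(\alpha)=\alpha$ we have $R_\alpha=\bigcup_{\beta<\alpha}R_\beta$, so its intersection with $\fp$ (resp.\ $\fq$) is the union of the $R_\beta\cap\fp$ (resp.\ $R_\beta\cap\fq$) for $\beta<\alpha$, which vanish by the inductive hypothesis. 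Taking the union over all $\beta\in\Omega$ then yields $S\cap\fp=(0)$ and $S\cap\fq=(0)$. (Equivalently, any element of $S$ lies in $R_\alpha$ for some non-limit $\alpha$---descend through limit stages, a process that terminates by well-foundedness of $\Omega$---and each such $R_\alpha$ is a $\fp$-subring.)

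For the cardinality bound, put $\kappa=\sup(|R_1|,|\Omega|)$, which is an infinite cardinal since the $R_\beta$ are assumed infinite. A second transfinite induction gives $|R_\beta|\le\kappa$ for all $\beta$: this is clear for $\beta=1$; for a successor $\alpha$ it follows from $|R_\alpha|=|R_{\gamma(\alpha)}|$ and the inductive hypothesis; and for a limit $\alpha$ we have $|R_\alpha|\le\sum_{\beta<\alpha}|R_\beta|\le|\{\beta\in\Omega:\beta<\alpha\}|\cdot\kappa\le|\Omega|\cdot\kappa=\kappa$, using that a product of two cardinals, at least one of them infinite, equals their maximum. Hence $|S|\le|\Omega|\cdot\kappa=\kappa$, as required.

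The argument is essentially transfinite bookkeeping, so I do not expect a genuine obstacle; the only points requiring care are matching the inductive steps precisely to the successor/limit dichotomy encoded by $\gamma$, and, in the last step, keeping track that all cardinals occurring at limit stages are infinite---which is exactly why the $R_\beta$ are required to be infinite.
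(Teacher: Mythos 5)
Your proof is correct and takes essentially the same approach as the paper, which simply asserts $R_\beta\cap\fq=(0)$ for all $\beta$ (what your transfinite induction verifies) and then cites the proof of \cite[Lemma 6]{Hei93} for the cardinality bound $|S|\leq\sup(|R_1|,|\Omega|)$, which you instead establish directly by the same standard cardinal arithmetic. The only difference is that you make the limit-stage bookkeeping explicit rather than deferring it; your side remark that the hypothesis on $\Omega$ is not used in proving this particular statement is also accurate.
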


\begin{proof}
First note that if $\fq \in \{\fp\} \bigcup \Ass(T)$, then $S \cap \fq = (0)$ follows since $R_{\beta} \cap \fq = (0)$ for all $\beta \in \Omega$. The inequality $|S| \leq \sup(|R_1|, |\Omega|)$ follows from the proof of \cite[Lemma 6]{Hei93}.
\end{proof}

\cref{lemma:base-ring} is crucial in showing that our final ring $A$ contains the socle generators of the approximately Gorenstein sequence (up to a unit).  To prove the lemma, we repeatedly apply \cref{lemma:yt-in-ring}.

\begin{lemma}\label{lemma:base-ring}
Let $(T,\fm)$ be a complete local ring and let $\Pi$ be the prime subring of $T$.  Suppose that all elements of $\Pi$ are regular elements of $T$ and let $\fp$ be a nonmaximal prime ideal of $T$ such that $\Pi \cap \fp = (0)$.  Let $y_1, y_2, \ldots$ be a countable collection of regular elements of $T$ with $y_i \in \fm \setminus \fp$ for all $i = 1,2, \ldots$.  Then there exists a countably infinite $\fp$-subring $(R, R \cap \fm)$ of $T$ such that, for every $i = 1,2, \ldots$, there is a unit $t_i$ of $T$ satisfying $y_it_i \in R$.  
\end{lemma}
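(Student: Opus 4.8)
The plan is to realize $R$ as the union of a countable ascending chain of countably infinite $\fp$-subrings of $T$, where at the $i$-th stage we adjoin an associate $y_it_i$ of $y_i$ using \cref{lemma:yt-in-ring}, and then invoke \cref{lemma:unioning} to see that the union is again a $\fp$-subring.

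First I would produce a base $\fp$-subring. Since $T$ is local, every element of the prime subring $\Pi$ not lying in $\fm$ is a unit of $T$, so the localization $\Pi_0 := \Pi_{(\Pi \cap \fm)}$ embeds into $T$ as a quasi-local subring with maximal ideal $\Pi_0 \cap \fm$, and $|\Pi_0| \leq \aleph_0$. For $\fq \in \{\fp\} \cup \Ass(T)$: any nonzero $a \in \Pi$ is a regular element of $T$ by hypothesis, hence $a \notin \fq$ when $\fq \in \Ass(T)$, while $a \notin \fp$ since $\Pi \cap \fp = (0)$; multiplying by a unit of $T$ preserves this, so $\Pi_0 \cap \fq = (0)$. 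Therefore $\Pi_0$ is a $\fp$-subring of $T$.

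Next I would iterate \cref{lemma:yt-in-ring}. Applying it to the $\fp$-subring $\Pi_0$ and the regular element $y_1 \in \fm \setminus \fp$ yields a $\fp$-subring $R_1 \supseteq \Pi_0$ with $y_1 t_1 \in R_1$ for some unit $t_1$ of $T$; as $\Pi_0$ is countable, $R_1$ is countably infinite. Inductively, given a countably infinite $\fp$-subring $R_i$, applying \cref{lemma:yt-in-ring} to $R_i$ and the regular element $y_{i+1} \in \fm \setminus \fp$ produces a countably infinite $\fp$-subring $R_{i+1} \supseteq R_i$ with $y_{i+1} t_{i+1} \in R_{i+1}$ for some unit $t_{i+1}$. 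This gives an ascending chain $R_1 \subseteq R_2 \subseteq \cdots$ of countably infinite $\fp$-subrings of $T$ with $y_i t_i \in R_i$ for every $i$.

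Finally, set $R = \bigcup_{i \geq 1} R_i$. Taking $\Omega = \Z_{\geq 1}$ (countable, with least element and no limit points) in \cref{lemma:unioning}, $R$ satisfies every condition to be a $\fp$-subring except possibly the cardinality bound, and $|R| \leq \sup(|R_1|, |\Omega|) = \aleph_0$; since $R \supseteq R_1$ is infinite, $|R| = \aleph_0$, so condition (\ref{item:p-subring-1}) holds (equality in $|R| \leq \sup(\aleph_0, |T/\fm|)$ forces $T/\fm$ countable). Thus $R$ is a countably infinite $\fp$-subring of $T$, and $y_i t_i \in R_i \subseteq R$ for all $i$, as required. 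No step presents a genuine obstacle; the only points needing care are verifying that the localized prime subring is a $\fp$-subring — this is exactly where the regularity hypothesis on $\Pi$ and the assumption $\Pi \cap \fp = (0)$ are used — and keeping every ring in the chain countably infinite so that \cref{lemma:unioning} applies and returns a countable union.
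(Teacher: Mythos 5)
Your proof is correct and follows exactly the same route as the paper's: localize the prime subring to get a base $\fp$-subring $\Pi_{(\Pi\cap\fm)}$, iteratively apply \cref{lemma:yt-in-ring} to build an ascending chain of countably infinite $\fp$-subrings $R_1\subseteq R_2\subseteq\cdots$ with $y_it_i\in R_i$, and take the union via \cref{lemma:unioning}. The only difference is that you spell out the verification that $\Pi_{(\Pi\cap\fm)}$ is a $\fp$-subring and the cardinality check at the end, both of which the paper leaves implicit.
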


\begin{proof}
Define $R_0$ to be $\Pi_{(\Pi \cap \fm)}$ and note that, by our hypotheses, $R_0$ is a $\fp$-subring of $T$.  Now use \cref{lemma:yt-in-ring} to find a countably infinite $\fp$-subring $(R_1, R_1 \cap \fm)$ of $T$ such that $R_0 \subseteq R_1 \subseteq T$ and $y_1t_1 \in R_1$ for some unit $t_1$ of $T$. Again use \cref{lemma:yt-in-ring} to find a countably infinite $\fp$-subring $(R_2, R_2 \cap \fm)$ of $T$ such that $R_1 \subseteq R_2 \subseteq T$ and $y_2t_2 \in R_2$ for some unit $t_2$ of $T$.  Continue inductively to define $R_n$ for every $n = 1,2, \ldots,$ so that $(R_n, R_n \cap \fm)$ is a countably infinite $\fp$-subring of $T$ satisfying $R_{n-1} \subseteq R_n \subseteq T$ and $y_nt_n \in R_n$ for some unit $t_n$ of $T$.  Define $R = \bigcup_{i = 1}^{\infty}R_i$.  By \cref{lemma:unioning}, $R$ is a $\fp$-subring of $T$ and by construction, for every $i = 1,2, \ldots$, there is a unit $t_i$ of $T$ satisfying $y_it_i \in R$.  
\end{proof}

To construct the domain $A$, we start with the $\fp$-subring guaranteed to exist by \cref{lemma:base-ring}, which has the effect of ensuring that our final ring $A$ will contain associates of the socle generators of the approximately Gorenstein sequence. This ring, however, may not have the desired completion.  We next adjoin elements to our starting $\fp$-subring of $T$, maintaining the $\fp$-subring properties, until the ring has the desired completion. The next result, taken from \cite{Hei94}, gives sufficient conditions for a subring of a complete local ring $T$ to have completion isomorphic to $T$. 

\begin{proposition}\label{proposition:heitmann-completion} \cite[Proposition 1]{Hei94}
Let $(R, R \cap \fm)$ be a quasi-local subring of a complete local ring $(T,\fm)$ such that the map $R \longrightarrow T/\fm^2$ is onto and $IT \cap R = I$ for every finitely generated ideal $I$ of $R$. Then $R$ is Noetherian and the natural homomorphism $\wh{R} \longrightarrow T$ is an isomorphism.
\end{proposition}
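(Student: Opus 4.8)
The plan is to prove the two assertions --- that $R$ is Noetherian, and that the natural map $\wh R\to T$ is an isomorphism --- with the contraction hypothesis $IT\cap R=I$ as the common engine, together with Noetherianity of $T$. Write $\fn=R\cap\fm$ for the maximal ideal of $R$. The one preliminary I would set up first is a good generating set for $\fn$. Since $T$ is Noetherian, $\fm/\fm^2$ is a finite-dimensional $T/\fm$-vector space; pick $y_1,\dots,y_n\in\fm$ whose classes form a basis and, using that $R\to T/\fm^2$ is onto, choose $x_i\in R$ with $x_i\equiv y_i\pmod{\fm^2}$. Then $x_i\in R\cap\fm=\fn$, the $x_i$ still generate $\fm/\fm^2$, and Nakayama gives $\fm=(x_1,\dots,x_n)T$. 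Hence for every $k$ the ideal $\fm^k$ is generated over $T$ by the degree-$k$ monomials in the $x_i$, which lie in $R$, so $\fm^k=\big((x_1,\dots,x_n)^kR\big)T$; applying the hypothesis to the finitely generated ideal $(x_1,\dots,x_n)^kR$ of $R$ gives $\fm^k\cap R=(x_1,\dots,x_n)^kR$, and in particular (at $k=1$) $\fn=(x_1,\dots,x_n)R$, so that $\fm^k\cap R=\fn^k$ for all $k\ge 1$.

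For Noetherianity, let $J\subseteq R$ be an arbitrary ideal. Then $JT$ is finitely generated over the Noetherian ring $T$; being generated as a $T$-module by $\{j:j\in J\}$, it is already generated by finitely many elements $j_1,\dots,j_m\in J$. The finitely generated ideal $I=(j_1,\dots,j_m)R\subseteq J$ of $R$ satisfies $IT=JT$, so the hypothesis gives $I=IT\cap R=JT\cap R\supseteq J$; hence $J=I$ is finitely generated, and $R$ is Noetherian.

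For the completion statement I would first bootstrap surjectivity of $R\to T/\fm^2$ to surjectivity of $R\to T/\fm^k$ for all $k\ge 1$, by induction on $k$: given $t\in T$ and $r_0\in R$ with $t-r_0\in\fm^k$, expand $t-r_0$ as a $T$-linear combination of degree-$k$ monomials in the $x_i$, replace each coefficient by an element of $R$ agreeing with it modulo $\fm$ (using $R\twoheadrightarrow T/\fm$), and observe that the error lies in $\fm^{k+1}$. Together with $\fm^k\cap R=\fn^k$ from the preliminary, this shows the inclusion $R\hookrightarrow T$ induces an isomorphism $R/\fn^k\xrightarrow{\sim}T/\fm^k$ for every $k$, and these are compatible with the transition maps of the two towers. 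Since $\fn^k=\fm^k\cap R$, the inclusion is continuous for the $\fn$-adic and $\fm$-adic topologies and so induces $\wh R\to\wh T$; passing to the inverse limit identifies this with $\varprojlim_k R/\fn^k\xrightarrow{\sim}\varprojlim_k T/\fm^k$, and as $T$ is complete the target is $T$. Thus $\wh R\cong T$.

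I do not anticipate a real obstacle: the strength of the assumption $IT\cap R=I$ is exactly that it yields both Noetherianity of $R$ outright and the equalities $\fm^k\cap R=\fn^k$ that make the residue maps injective, while surjectivity of $R\to T/\fm^2$ supplies (via the $x_i$ and the bootstrap) the needed surjectivity. The only point that needs a little care is verifying that the isomorphisms $R/\fn^k\cong T/\fm^k$ are compatible across $k$, so that their inverse limit legitimately computes the natural map $\wh R\to T$; that verification is routine and is where completeness of $T$ enters.
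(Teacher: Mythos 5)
Your proof is correct, and it is essentially the standard argument (and, to my recollection, the one in Heitmann's cited Proposition~1): lift a $T$-generating set of $\fm$ into $R$ via the surjection onto $T/\fm^2$, use the contraction hypothesis to get $\fm^k\cap R=\fn^k$ and to force finite generation of arbitrary ideals of $R$, bootstrap surjectivity of $R\to T/\fm^k$ by induction, and identify $\wh R\cong\varprojlim R/\fn^k\cong\varprojlim T/\fm^k=T$. The paper itself only cites the result, so there is no internal proof to compare against; your write-up fills that in faithfully.
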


We now work to construct our ring $A$ to satisfy the conditions of \cref{proposition:heitmann-completion}. The following two lemmas will be useful for that purpose. We note that both lemmas can be thought of as generalizations of the prime avoidance theorem.

\begin{lemma}\label{lemma:countable}\cite[Lemma 2]{Hei93}
Let $(T,\fm)$ be a complete local ring and let $C$ be a countable set of nonmaximal prime ideals of $T$.  Let $D$ be a countable set of elements of $T$.  If $I\subseteq T$ is an ideal which is contained in no single $\fq\in C$, then $I \not\subseteq \bigcup \{\fq + r \, | \, \fq \in C, r \in D\}$.
\end{lemma}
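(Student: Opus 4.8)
The plan is to run a Baire category argument inside the ideal $I$, viewed as a metric space. Since $T$ is a Noetherian local ring, Krull's intersection theorem gives $\bigcap_n \fm^n = (0)$, so the $\fm$-adic topology on $T$ is metrizable via $d(a,b) = 2^{-v(a-b)}$, where $v(a) = \sup\{n\ge 0 : a\in\fm^n\}$ (with $v(0)=\infty$ and $2^{-\infty}:=0$); that $T$ is \emph{complete} local says exactly that $(T,d)$ is a complete metric space. Every ideal of $T$ is $\fm$-adically closed (Krull again, applied to $T/I$), so in particular $I$ is closed, hence $(I,d)$ is a nonempty (it contains $0$) complete metric space, and therefore a Baire space. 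It thus suffices to show that for each pair $(\fq,r)$ with $\fq\in C$ and $r\in D$ the set $(\fq+r)\cap I$ is closed and nowhere dense in $I$: then $\bigcup\{(\fq+r)\cap I : \fq\in C,\ r\in D\}$ is a countable union of nowhere dense sets, hence meager, hence a proper subset of $I$ — which is precisely the assertion $I\not\subseteq\bigcup\{\fq+r : \fq\in C,\ r\in D\}$.

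Closedness of $(\fq+r)\cap I$ is immediate, since $\fq$ is $\fm$-adically closed, translation by $r$ is a homeomorphism of $T$, and intersecting with the closed set $I$ preserves closedness. The one step that genuinely uses the hypotheses — and the step I expect to be the crux — is nowhere-density. Suppose it fails; being closed, $(\fq+r)\cap I$ would then contain a nonempty basic open subset of $I$, i.e.\ there would be $w\in(\fq+r)\cap I$ and an integer $N\ge 1$ with $(w+\fm^N)\cap I\subseteq\fq+r$. Since $w-r\in\fq$, subtracting $w$ gives $I\cap\fm^N\subseteq\fq$. But $\fq\ne\fm$ and $\fq$ is prime, so $\fm^N\not\subseteq\fq$; combined with the hypothesis $I\not\subseteq\fq$ and primality of $\fq$, this forces $I\fm^N\not\subseteq\fq$, contradicting $I\fm^N\subseteq I\cap\fm^N\subseteq\fq$. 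Hence each $(\fq+r)\cap I$ is nowhere dense and the Baire category theorem finishes the proof.

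If one prefers not to invoke Baire's theorem as a black box, the same proof can be written as an explicit recursion: enumerate $C\times D$ as $\{(\fq_i,r_i)\}_{i\ge 1}$ (possible, as it is countable) and construct a descending chain of nonempty sets $B_i=(w_i+\fm^{N_i})\cap I$ with $N_i\to\infty$ and $B_i\cap(\fq_i+r_i)=\emptyset$, at each stage using the nowhere-density computation above to pick a point of $B_{i-1}$ off $\fq_i+r_i$ and then a small enough ball around it still contained in $B_{i-1}$; the centers $w_i$ form a Cauchy sequence whose limit exists in $I$ by completeness, lies in every $B_i$, and hence misses every $\fq_i+r_i$. Pleasantly, and in contrast to \cref{lemma:countable-prime-avoidance,lemma:uncountable-prime-avoidance}, no case split on the cardinality of $T/\fm$ is needed here; the only delicate point in either presentation is the nowhere-density step, where $\fq$ prime, $\fq\ne\fm$, and $I\not\subseteq\fq$ are used simultaneously, and everything else is soft point-set topology together with Krull's intersection theorem.
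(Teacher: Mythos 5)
Your proposal is correct, and the argument is essentially the standard one from the cited reference [Hei93, Lemma 2]: the ``explicit recursion'' you describe in the last paragraph (enumerate $C\times D$, build a nested chain of balls $(w_i+\fm^{N_i})\cap I$ avoiding successive cosets, and take the limit of the Cauchy sequence of centers using completeness) is precisely Heitmann's nested-ball argument, and your Baire-category formulation is a clean repackaging of the same idea. The nowhere-density computation is the genuine content and you handle it correctly: from $(w+\fm^N)\cap I\subseteq\fq+r$ with $w\in(\fq+r)\cap I$ you correctly deduce $I\cap\fm^N\subseteq\fq$, and then $I\fm^N\subseteq\fq$ together with $I\not\subseteq\fq$ and $\fq$ prime forces $\fm^N\subseteq\fq$, contradicting that $\fq$ is nonmaximal.
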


\begin{lemma}\label{lemma:uncountable}\cite[Lemma 3]{Hei93}
Let $(T,\fm)$ be a local ring and let $C$ be a set of prime ideals of $T$.  Let $D$ be a set of elements of $T$.  Suppose $|C \times D| < |T/\fm|$.  If $I\subseteq T$ is an ideal which is contained in no single $\fq \in C$, then $I \not\subseteq \bigcup \{\fq + r \, | \, \fq \in C, r \in D\}$.
\end{lemma}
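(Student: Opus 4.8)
The plan is to prove the contrapositive: assuming $I\subseteq\bigcup\{\fq+r\ :\ \fq\in C,\ r\in D\}$, I will produce a single prime $\fq\in C$ with $I\subseteq\fq$. First I would clear the degenerate cases. If $C=\emptyset$ or $D=\emptyset$ the right-hand union is empty and there is nothing to prove, so assume both are nonempty; then $|T/\fm|>|C\times D|\ge 1$. If $I=(0)$ then $I\subseteq\fq$ for any $\fq\in C$ and we are done, so assume $I\ne(0)$; since $T$ is Noetherian, $I$ is finitely generated, and in particular $I\ne\fm I$ by Nakayama.

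The first substantive step is to recast the containment as a covering statement for a vector space. For each $\fq\in C$ the $T$-module $\fq\cap I$ is a submodule of $I$, proper because $I\not\subseteq\fq$; and for each $(\fq,r)\in C\times D$ with $(\fq+r)\cap I\ne\emptyset$, this intersection is a single coset of $\fq\cap I$ in $I$ (if $a,b$ lie in it then $a-b\in\fq\cap I$). Hence $I$ is covered by at most $|C\times D|$ cosets of the submodules $\{\fq\cap I\}_{\fq\in C}$. Pushing forward along $I\twoheadrightarrow V:=I/\fm I$, which is a nonzero finite-dimensional $T/\fm$-vector space, and writing $W_\fq$ for the image of $\fq\cap I$, we find that $V$ is a union of at most $|C\times D|<|T/\fm|$ cosets of the subspaces $W_\fq$.

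The engine is then the following covering lemma: \emph{a nonzero vector space over a field $F$ is never a union of strictly fewer than $|F|$ cosets of proper subspaces}. Granting it, not every $W_\fq$ can be proper, so $W_\fq=V$ for some $\fq$; this says $(\fq\cap I)+\fm I=I$, and Nakayama (applied to $I/(\fq\cap I)$) then gives $I=\fq\cap I$, i.e.\ $I\subseteq\fq$ — the desired contrapositive conclusion. I would prove the covering lemma by induction on $\dim_F V$. For $\dim_F V=1$ the cosets of proper subspaces are single points, so covering $V$ takes at least $|V|=|F|$ of them. For $\dim_F V\ge 2$, suppose toward a contradiction that $V=\bigcup_{j\in\Lambda}(v_j+W_j)$ with each $W_j$ proper and $|\Lambda|<|F|$, and intersect with an arbitrary linear hyperplane $H\subseteq V$: each nonempty $(v_j+W_j)\cap H$ is a coset of $W_j\cap H$ in $H$, and $W_j\cap H$ is proper in $H$ unless $W_j\supseteq H$, which forces $W_j=H$ by comparing dimensions. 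So either the induced cover of $H$ uses only proper subspaces of $H$, contradicting the inductive hypothesis (it still uses fewer than $|F|$ cosets), or some $W_j$ equals $H$; since this alternative must then hold for \emph{every} linear hyperplane $H$, every linear hyperplane of $V$ appears among $\{W_j\}_{j\in\Lambda}$. But a vector space of dimension $\ge 2$ over $F$ has at least $|F|$ linear hyperplanes, hence more than $|\Lambda|$ of them, while the $W_j$ number at most $|\Lambda|$ — a contradiction.

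The one genuinely delicate point, which I expect to be the main obstacle, is the covering lemma and especially the cardinal bookkeeping when the index set $\Lambda$ is infinite; the reduction to $V=I/\fm I$, the Nakayama steps, and the arithmetic around $|C\times D|<|T/\fm|$ are routine. I would also note that this residue-field hypothesis is precisely what makes the dimension-counting argument available, and that it has no analogue in \cref{lemma:countable}, where one instead exploits completeness of $T$ to handle a (possibly large, but only countable) set $C$.
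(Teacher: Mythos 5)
Your proof is correct, and the reduction to a vector-space covering lemma is a clean way to organize the argument. Since the paper cites Heitmann \cite{Hei93} rather than reproving the statement, there is no in-paper proof to compare against directly; but the route in \cite{Hei93} (and in most treatments of this flavor of coset prime avoidance) is the same cardinality count pushed to $T/\fm$, just phrased more concretely at the level of elements: roughly, one arranges for a single element $a\in I$ with $a\notin\fq$ for all $\fq\in C$ (possibly after a preliminary reduction), and then notes that for each pair $(\fq,r)$ at most one residue class $u\bmod\fm$ can place $ua$ in $\fq+r$, so that $|C\times D|<|T/\fm|$ pairs cannot exhaust every residue class. Your approach abstracts this into a covering lemma for $V=I/\fm I$: you verify that each nonempty $(\fq+r)\cap I$ is a coset of the submodule $\fq\cap I$, push forward to $V$, and then appeal to the fact that a nonzero $F$-vector space cannot be covered by fewer than $|F|$ cosets of proper subspaces, finishing with Nakayama. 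This buys a uniform treatment of an arbitrary finitely generated $I$ at the cost of establishing the covering lemma, which you prove carefully by induction on dimension with the hyperplane count, and the cardinal bookkeeping checks out in both the finite and infinite residue field cases. The only presentational point worth flagging is that your covering lemma is stated for ``a nonzero vector space'' but proved (via induction on dimension) only for finite-dimensional ones; since $I/\fm I$ is finite-dimensional this is all you use, but it would be cleaner to say so when stating the lemma.
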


We next show that we can adjoin an element of an arbitrary coset $u + \fm^2$ without disturbing the $\fp$-subring properties. We use the result to construct $A$ so that the map $A \longrightarrow T/\fm^2$ is onto, which is a needed condition for \cref{proposition:heitmann-completion}. \cref{lemma:adjoin-coset} should be compared with \cite[Lemma 19]{LR01} where $T$ is further assumed to be a domain.

\begin{lemma}\label{lemma:adjoin-coset}
Let $(T,\fm)$ be a complete local ring with $\fm \not\in \Ass(T)$ and let $\fp$ be a nonmaximal prime ideal of $T$. Let $(R,R \cap \fm)$ be an infinite $\fp$-subring of $T$ and let $u \in T$.  Then there exists a $\fp$-subring $(S, S \cap \fm)$ of $T$ such that $R \subseteq S \subseteq T$, $u + \fm^2$ is in the image of the map $S \longrightarrow T/\fm^2$, and $|S| = |R|$.
\end{lemma}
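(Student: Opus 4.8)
The plan is to realize $S$ in the form $R[x]_{(R[x]\cap\fm)}$ for a single element $x$ chosen so that $x \equiv u \pmod{\fm^2}$ and $x + \fq$ is transcendental over $R/(R\cap\fq)\cong R$ for every $\fq$ in the finite set $C := \{\fp\}\cup\Ass(T)$. Once such an $x$ is produced, \cref{lemma:transcendental} immediately gives that $S$ is a $\fp$-subring of $T$ with $R\subseteq S\subseteq T$ and $|S| = |R|$ (using that $R$ is infinite), while $u + \fm^2$ lies in the image of $S\to T/\fm^2$ by the congruence $x\equiv u$. So the entire task is the choice of $x$.

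First I would identify the set of ``bad'' elements to avoid. For each $\fq\in C$, note $R\cap\fq = (0)$ by the definition of a $\fp$-subring, so $R/(R\cap\fq)\cong R$, and let $D_{(\fq)}\subseteq T$ be a full set of coset representatives modulo $\fq$ of the elements of the domain $T/\fq$ that are algebraic over the image of $R$. Because $R$ is infinite the polynomial ring $R[X]$ has cardinality $|R|$, and each nonzero polynomial has only finitely many roots in the domain $T/\fq$, so $|D_{(\fq)}|\le|R|$; hence $D := \bigcup_{\fq\in C}D_{(\fq)}$ has $|D|\le|R|$. By \cref{lemma:transcendental}, any $x$ with $x\notin\bigcup\{\fq + r : \fq\in C,\ r\in D\}$ is a valid choice, since then $x+\fq$ is transcendental over $R/(R\cap\fq)\cong R$ for every $\fq\in C$.

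The second step is to meet this avoidance condition inside the coset $u+\fm^2$, which I would do by a translation trick: it suffices to find $v\in\fm^2$ with $v\notin\bigcup\{\fq+(r-u) : \fq\in C,\ r\in D\}$ and then set $x := u+v$. Put $D' := \{r-u : r\in D\}$, so $|D'|\le|R|$. The ideal $\fm^2$ is contained in no $\fq\in C$: if $\fm^2\subseteq\fq$ then $\fm\subseteq\fq$ since $\fq$ is prime, forcing $\fq=\fm$, which is impossible because $\fp\ne\fm$ and $\fm\notin\Ass(T)$. Now if $R$ is countable, then $C$ and $D'$ are countable and \cref{lemma:countable} produces such a $v$; if $R$ is uncountable, then the cardinality clause in the definition of a $\fp$-subring forces $|R|<|T/\fm|$ (equality $|R|=|T/\fm|$ would make $T/\fm$ countable, a contradiction), so $|C\times D'|\le|R|<|T/\fm|$ and \cref{lemma:uncountable} produces $v$. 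Setting $x=u+v$ then finishes the argument via \cref{lemma:transcendental} as described in the first paragraph.

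I do not expect a genuine obstacle here: the proof is essentially bookkeeping that packages \cref{lemma:transcendental} together with the prime-avoidance \cref{lemma:countable,lemma:uncountable}. The only subtle point is that one must avoid cosets inside $u+\fm^2$ rather than inside an honest ideal, which is precisely what the translation by $-u$ and the observation $\fm^2\not\subseteq\fq$ handle; verifying the two cardinality estimates ($|D|\le|R|$, and in the uncountable case $|R|<|T/\fm|$) is the other thing that needs care.
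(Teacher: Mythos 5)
Your proof is correct and takes essentially the same approach as the paper: your translated set $D'=\{r-u : r\in D\}$ is exactly the paper's set $D$ (defined directly as coset representatives of the $z$ with $z+u+\fq$ algebraic over $R$), and both arguments then invoke \cref{lemma:countable} or \cref{lemma:uncountable} with $I=\fm^2$ followed by \cref{lemma:transcendental}. Your write-up spells out the cardinality estimate $|D|\le|R|$ and the case split on $|R|$ a bit more explicitly than the paper does, but the argument is the same.
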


\begin{proof}
Define $C = \{\fp\} \cup \Ass(T)$ and let $\fq \in C$.  Let $D_{(\fq)}$ be a full set of coset representatives of the elements $z + \fq \in T/\fq$ that make $z + u + \fq$ algebraic over $R$, and define $D = \bigcup_{\fq \in C}D_{(\fq)}$.  Since $\fm \not\subseteq \fq$ for all $\fq \in C$, we have $\fm^2 \not\subseteq \fq$ for all $\fq \in C$. Use \cref{lemma:countable} if $R$ is countable and \cref{lemma:uncountable} if not, both with $I = \fm^2$, to find $x \in \fm^2$ such that $x \not\in \bigcup \{\fq + r \, | \,\fq \in C, \, r \in D\}.$  Define $S = R[x + u]_{(R[x + u] \cap \fm)}$.  Note that $R \subseteq S \subseteq T$ and $u + \fm^2$ is in the image of the map $S \longrightarrow T/\fm^2$. By \cref{lemma:transcendental}, $S$ is a $\fp$-subring of $T$ and $|S| = |R|$.
\end{proof}

To use \cref{proposition:heitmann-completion} it will be crucial to ensure that our final ring $A$ satisfies the condition that $IT \cap A = I$ for every finitely generated ideal $I$ of $A$. The next two lemmas are in service of this.

\begin{lemma}\label{lemma:c-in-IS}
Let $(T,\fm)$ be a complete local ring and let $\fp$ be a nonmaximal prime ideal of $T$. Let $(R,R \cap \fm)$ be a $\fp$-subring of $T$.  Suppose $I$ is a finitely generated ideal of $R$ and let $c \in IT \cap R$.  Then there exists a $\fp$-subring $(S, S \cap \fm)$ of $T$ such that $R \subseteq S \subseteq T$ and $c \in IS$.  Moreover, if $R$ is infinite, then $|S| = |R|$.
\end{lemma}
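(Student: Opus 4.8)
The plan is to produce $S$ by adjoining to $R$ a block of $n$ carefully chosen elements of $T$, where $I=(a_1,\dots,a_n)R$; after discarding the zero generators (and taking $S=R$ outright when $I=(0)$ or $c=0$) I reindex so that $a_n\neq 0$. The first thing I would record is that, since $R$ is a $\fp$-subring, the third condition in the definition of a $\fp$-subring gives $R\cap\fq=(0)$ for every $\fq\in\Ass(T)$; as the zerodivisors of $T$ are $\bigcup_{\fq\in\Ass(T)}\fq$, this means every nonzero element of $R$ — in particular $a_n$ — is a nonzerodivisor of $T$, and consequently the ideal $a_nT$ is contained in no member of $C:=\{\fp\}\cup\Ass(T)$. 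I would then fix a presentation $c=\sum_{i=1}^n a_iu_i$ with $u_i\in T$.

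Next I would build an ascending chain $R=R_0\subseteq R_1\subseteq\dots\subseteq R_{n-1}$ of $\fp$-subrings, adjoining one element at a time. Having produced $R_{i-1}$ (a $\fp$-subring, hence with $R_{i-1}\cap\fq=(0)$ and $R_{i-1}/(R_{i-1}\cap\fq)\cong R_{i-1}$ for all $\fq\in C$), I apply \cref{lemma:countable} or \cref{lemma:uncountable} — according to whether $R_{i-1}$ is countable — to the ideal $a_nT$ to find a correction $\delta_i\in a_nT$ lying outside $\bigcup\{\fq+r : \fq\in C,\ r\in D\}$, where $D$ is a union over $\fq\in C$ of coset representatives chosen so that avoiding these cosets forces $u_i+\delta_i+\fq$ to be transcendental over $R_{i-1}$ for every $\fq\in C$ simultaneously; the cardinality bookkeeping needed to invoke these lemmas is exactly the one built into the definition of a $\fp$-subring, just as in the proofs of \cref{lemma:yt-in-ring} and \cref{lemma:adjoin-coset}. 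Setting $x_i:=u_i+\delta_i\in T$, \cref{lemma:transcendental} then upgrades $R_{i-1}$ to the $\fp$-subring $R_i:=R_{i-1}[x_i]_{(R_{i-1}[x_i]\cap\fm)}$ of the same cardinality (countably infinite if $R$ was finite).

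The crux is the last element: I set $x_n:=\bigl(c-\sum_{i=1}^{n-1}a_ix_i\bigr)/a_n$ and take $S:=R_{n-1}[x_n]_{(R_{n-1}[x_n]\cap\fm)}$. The reason for confining every $\delta_i$ to $a_nT$ is that then $c-\sum_{i<n}a_ix_i=a_nu_n-\sum_{i<n}a_i\delta_i$ is divisible by $a_n$ in $T$, so $x_n$ is a genuine element of $T$ (here I use that $a_n$ is a nonzerodivisor), and by design $c=\sum_{i=1}^n a_ix_i\in IS$; the cardinality clause is immediate since only finitely many elements were adjoined. The remaining — and main — issue is that $x_n$ is now \emph{algebraically dependent} on $x_1,\dots,x_{n-1}$ over $R$, so \cref{lemma:transcendental} no longer applies and I must verify directly that $S\cap\fq=(0)$ for every $\fq\in C$. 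I would do this by observing that $a_nx_n\in R_{n-1}$ with $a_n$ a nonzero element of the domain $R_{n-1}$, so $R_{n-1}[x_n]\subseteq\Frac(R_{n-1})$; for $\fq\in C$ the reduction map restricts to an embedding $R_{n-1}\hookrightarrow T/\fq$ (valid because $R_{n-1}\cap\fq=(0)$ and $T/\fq$ is a domain), which extends to $\Frac(R_{n-1})\hookrightarrow\Frac(T/\fq)$, and this extension agrees on $R_{n-1}[x_n]$ with the composite $R_{n-1}[x_n]\hookrightarrow T\to T/\fq\hookrightarrow\Frac(T/\fq)$ since both send $x_n$ to $\bigl(\bar c-\sum\bar a_i\bar x_i\bigr)/\bar a_n$ (legitimate as $\bar a_n\neq 0$). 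Injectivity of the former map forces $R_{n-1}[x_n]\cap\fq=(0)$, and localizing at $R_{n-1}[x_n]\cap\fm$ preserves this because elements of $R_{n-1}[x_n]\setminus\fm$ are units of $T$. When $n=1$ the chain is vacuous and one simply takes $S=R[c/a_1]_{(R[c/a_1]\cap\fm)}$.

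I expect the main obstacle to be precisely this last verification: one cannot afford to throw the $u_i$ directly into $R$ (they may meet some associated prime of $T$), the "generic" corrections $\delta_i$ must nonetheless be chosen inside $a_nT$ so that the dependent element $x_n$ still lands in $T$, and the cleanest way to see that the final ring continues to avoid every prime in $C$ is to exhibit it inside $\Frac(R_{n-1})$ rather than attempting to apply \cref{lemma:transcendental}.
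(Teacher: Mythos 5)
Your proof is correct and follows essentially the same strategy as \cite[Lemma 6]{Loe98}, which the paper invokes without reproving: adjoin $n-1$ "generic" corrections of the coefficients $u_i$ in a fixed presentation $c=\sum a_iu_i$ — chosen via the prime-avoidance Lemmas \ref{lemma:countable}/\ref{lemma:uncountable} so that each residue is transcendental over the previous stage, and confined to $a_nT$ so that the forced final element $x_n=(c-\sum_{i<n}a_ix_i)/a_n$ still lies in $T$ — and then verify $R_{n-1}[x_n]\cap\fq=(0)$ by recognizing $R_{n-1}[x_n]$ as a subring of $\Frac(R_{n-1})$. You correctly identify and handle the key obstruction (that \cref{lemma:transcendental} cannot be applied to the algebraic element $x_n$), and the cardinality bookkeeping matches the paper's remark that $S$ arises from $R$ by adjoining finitely many elements and localizing.
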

\begin{proof}
    The first statement regarding the existence of the desired $\fp$-subring $S$ is \cite[Lemma 6]{Loe98}. Since $S$ is constructed from $R$ by adjoining finitely many elements from $T$ and then localizing, the second statement holds.
\end{proof}

\begin{lemma} (cf. \cite[Lemma 21]{LR01})\label{lemma:intermediate-p-subring}
Let $(T,\fm)$ be a complete local ring with $\fm \not\in \Ass(T)$ and let $\fp$ be a nonmaximal prime ideal of $T$. Let $(R,R \cap \fm)$ be an infinite $\fp$-subring of $T$ and let $u \in T$.  Then there exists a $\fp$-subring $(S, S \cap \fm)$ of $T$ such that\begin{enumerate}
    \item $R \subseteq S \subseteq T$,
    \item $u + \fm^2$ is in the image of the map $S \longrightarrow T/\fm^2$,
    \item For every finitely generated ideal $I$ of $S$, we have $IT \cap S = I$, and
    \item $|S| = |R|$.
\end{enumerate}
\end{lemma}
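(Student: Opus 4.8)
The plan is to build $S$ as the union of a countable ascending chain of $\fp$-subrings of $T$, each of cardinality $|R|$, arranged so that the union inherits every $\fp$-subring property while also acquiring (2) and (3). First I would apply \cref{lemma:adjoin-coset} to $R$ and $u$ to obtain a $\fp$-subring $R_1$ with $R\subseteq R_1\subseteq T$, with $u+\fm^2$ in the image of $R_1\to T/\fm^2$, and with $|R_1|=|R|$; this is the step where $\fm\notin\Ass(T)$ is used. Property (2) then persists in every larger subring, so it only has to be arranged this once.

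Next I would inductively construct $R_1\subseteq R_2\subseteq\cdots$ of $\fp$-subrings of $T$, each of cardinality $|R|$, such that for every finitely generated ideal $J$ of $R_n$ and every $c\in JT\cap R_n$ one has $c\in JR_{n+1}$. To pass from $R_n$ to $R_{n+1}$: since $R_n$ is infinite, the set of finitely generated ideals $J$ of $R_n$ has cardinality $|R_n|=|R|$, and for each such $J$ the set of $c\in JT\cap R_n$ has cardinality at most $|R|$, so the collection $\Lambda_n$ of all pairs $(J,c)$ satisfies $|\Lambda_n|\le|R|$. Enumerate $\Lambda_n$ as $\{(J_\alpha,c_\alpha)\}_{\alpha<\kappa}$, where $\kappa$ is chosen countable when $R$ is countable and otherwise equal to the least ordinal of cardinality $|R|$, so that every proper initial segment of $\kappa$ has cardinality $<|R|\le|T/\fm|$ — the last inequality holding because an uncountable $\fp$-subring satisfies $|R|\le\sup(\aleph_0,|T/\fm|)=|T/\fm|$. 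Now perform a transfinite recursion inside this round: set $T_{n,0}=R_n$; at a successor step $\alpha+1$, apply \cref{lemma:c-in-IS} to the $\fp$-subring $T_{n,\alpha}$, taking as ideal the one generated inside $T_{n,\alpha}$ by the chosen generators of $J_\alpha$ and noting that $c_\alpha$ lies in its extension to $T$ (which equals $J_\alpha T$), to obtain a $\fp$-subring $T_{n,\alpha+1}$ with $c_\alpha\in J_\alpha T_{n,\alpha+1}$ and $|T_{n,\alpha+1}|=|R|$; at a limit step $\lambda$, set $T_{n,\lambda}=\bigcup_{\beta<\lambda}T_{n,\beta}$, which by \cref{lemma:unioning} together with $|T_{n,\lambda}|\le\sup(|R|,|\lambda|)=|R|$ is again a $\fp$-subring of cardinality $|R|$. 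Put $R_{n+1}=\bigcup_{\alpha<\kappa}T_{n,\alpha}$; the same reasoning shows it is a $\fp$-subring with $|R_{n+1}|=|R|$, and every pair in $\Lambda_n$ has been handled.

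Finally set $S=\bigcup_{n\ge1}R_n$. This is a countable union, so \cref{lemma:unioning} (with index set $\N$) gives that $S$ meets all the $\fp$-subring conditions except possibly the cardinality bound, and $|S|\le\sup(|R_1|,\aleph_0)=|R|$; since $R\subseteq S$, we get $|S|=|R|$, which with $R$ a $\fp$-subring yields the cardinality condition (\ref{item:p-subring-1}) for $S$ as well, so $S$ is a $\fp$-subring of $T$. This establishes (1), (4), and — since (2) already holds in $R_1\subseteq S$ — also (2). For (3), let $I$ be a finitely generated ideal of $S$ with chosen generators $a_1,\dots,a_k\in S$ and let $c\in IT\cap S$. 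Choose $n$ so large that $a_1,\dots,a_k,c\in R_n$, and set $J=(a_1,\dots,a_k)R_n$, so $JT=IT$ and hence $c\in JT\cap R_n$. By the construction of $R_{n+1}$ we get $c\in JR_{n+1}\subseteq(a_1,\dots,a_k)S=I$. Thus $IT\cap S\subseteq I$, the reverse inclusion is trivial, and (3) follows.

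The conceptual content — the ``closing up'' that produces (3) — is immediate from \cref{lemma:c-in-IS}, and the adjunction of a preimage of $u+\fm^2$ from \cref{lemma:adjoin-coset}. The part demanding care, and the main obstacle to a clean write-up, will be the cardinality bookkeeping: one must pick the indexing ordinals so that each union step falls under the hypotheses of \cref{lemma:unioning} (every proper initial segment of cardinality $<|T/\fm|$ in the uncountable case, countable index set otherwise) and so that cardinality does not grow, which is precisely what the inequality $|R|\le\sup(\aleph_0,|T/\fm|)$ in the definition of a $\fp$-subring is designed to guarantee.
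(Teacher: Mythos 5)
Your proposal is correct and takes essentially the same approach as the paper: apply \cref{lemma:adjoin-coset} once to secure condition (2), then build an $\omega$-chain $R_1\subseteq R_2\subseteq\cdots$ where each step handles all pairs $(J,c)$ from the previous ring via a transfinite iteration of \cref{lemma:c-in-IS} glued together by \cref{lemma:unioning}, and finally take the countable union and verify (3) by moving far enough out in the chain. Your write-up is slightly more explicit than the paper's about choosing $\kappa$ to be the initial ordinal of cardinality $|R|$ so that the predecessor-cardinality hypothesis of \cref{lemma:unioning} is met, but this is the same argument.
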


\begin{proof}
First use \cref{lemma:adjoin-coset} to find a $\fp$-subring $(R_1, R_1 \cap \fm)$ of $T$ such that $R \subseteq R_1 \subseteq T$, $u + \fm^2$ is in the image of the map $R_1 \longrightarrow T/\fm^2$, and $|R_1| = |R|$.  Let $$\Omega = \{(I,c) \, | \, I \mbox{ is a finitely generated ideal of } R_1 \mbox{ and } c \in IT \cap R_1\}.$$
Note that $|\Omega| = |R_1|$.  Well-order $\Omega$ so that $1$ is its least element and so that it does not have a maximal element.  We inductively define an ascending collection $\{R_{\alpha}\}_{\alpha \in \Omega}$ of $\fp$-subrings of $T$.  We have already defined $R_1$.  Let $\alpha \in \Omega$ and assume that $R_{\beta}$ has been defined for all $\beta < \alpha$.  If $\gamma(\alpha) < \alpha$ and $\gamma(\alpha) = (I,c)$ then define $R_{\alpha}$ to be the $\fp$-subring of $T$ obtained from \cref{lemma:c-in-IS} so that $R_{\gamma(\alpha)} \subseteq R_{\alpha} \subseteq T$, $c \in IR_{\alpha}$, and $|R_{\alpha}| = |R_{\gamma(\alpha)}|$. If, on the other hand, $\gamma(\alpha) = \alpha$, then define $R_{\alpha} = \bigcup_{\beta < \alpha}R_{\beta}$. By \cref{lemma:unioning}, $R_2 = \bigcup_{\alpha \in \Omega}R_{\alpha}$ is a $\fp$-subring of $T$ and $|R_2| = |R_1|$. Now let $I$ be a finitely generated ideal of $R_1$ and let $c \in IT \cap R_1$.  Then $(I,c) = \gamma(\alpha)$ for some $\alpha \in \Omega$ with $\gamma(\alpha) < \alpha$. By construction, $c \in IR_{\alpha} \subseteq IR_2$.  It follows that $IT \cap R_1 \subseteq IR_2$ for every finitely generated ideal $I$ of $R_1$.

Repeat the above construction replacing $R_1$ with $R_2$ to find a $\fp$-subring $R_3$ of $T$ such that $R_2 \subseteq R_3 \subseteq T$, $|R_3| = |R_2|$, and $IT \cap R_2 \subseteq IR_3$ for every finitely generated ideal $I$ of $R_2$.  Continue to obtain a chain of $\fp$-subrings $R_1 \subseteq R_2 \subseteq R_3 \subseteq \cdots$ of $T$ such that, for all $n$, $|R_n| = |R|$, and, for every finitely generated ideal $I$ of $R_n$, we have $IT \cap R_n \subseteq R_{n + 1}$. Let $S = \bigcup_{i = 1}^{\infty}R_i$. By \cref{lemma:unioning}, $S$ is a $\fp$-subring of $T$ and $|S| = |R|$. Now suppose that $I\subseteq S$ is a finitely generated ideal.  Write $I = (a_1, \ldots ,a_m)$ for some $a_i \in S$. Let $c \in IT \cap S$. Then there is an $N$ such that $c \in R_N$ and $a_i \in R_N$ for all $i \in \{1,2, \ldots ,m\}$. Now, $c \in (a_1,a_2, \ldots,a_m)T \cap R_N \subseteq (a_1,a_2, \ldots,a_m)R_{N+1} \subseteq I$. Hence $IT \cap S = I$, and it follows that $S$ is the desired $\fp$-subring of $T$.
\end{proof}

We now have all of the tools needed to construct our final ring $A$.

\begin{theorem}\label{theorem:p-subring-precompletion}
Let $(T,\fm)$ be a complete local ring and let $\Pi$ be the prime subring of $T$.  Suppose that all elements of $\Pi$ are regular elements of $T$ and let $\fp$ be a nonmaximal prime ideal of $T$ such that $\Pi \cap \fp = (0)$.  Let $y_1, y_2, \ldots$ be a countable collection of regular elements of $T$ with $y_i \in \fm \setminus \fp$ for all $i = 1,2, \ldots$.  Then there exists a local domain $A$ such that $\wh{A} \cong T$, $A \cap \fp = (0)$, and, for every $i = 1,2, \ldots$, there is a unit $t_i$ of $T$ satisfying $y_it_i \in A$.  
\end{theorem}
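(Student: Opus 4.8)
The plan is to build $A$ as the union of an ascending transfinite chain of $\fp$-subrings of $T$, interleaving three kinds of steps: (a) a base step that installs associates $y_it_i$ of all the prescribed regular elements inside a single countable $\fp$-subring, via \cref{lemma:base-ring}; (b) closure steps that repeatedly apply \cref{lemma:intermediate-p-subring} so that, in the limit, the surjectivity $A\to T/\fm^2$ and the colon condition $IT\cap A=I$ for finitely generated $I$ of \cref{proposition:heitmann-completion} both hold; and (c) union steps (controlled by \cref{lemma:unioning}) to pass through limit ordinals while keeping the $\fp$-subring axioms. At the very end, invoke \cref{proposition:heitmann-completion} to conclude $A$ is Noetherian with $\wh A\cong T$, note $A\cap\fp=(0)$ is inherited from each stage, and observe that $A$ is a domain because $A\cap\fq=(0)$ for all $\fq\in\Ass(T)$ (axiom \ref{item:p-subring-3}) forces $A$ to contain no zerodivisors of $T$, hence to embed in the total quotient ring, but more directly: any minimal prime of $T$ meets $A$ in $(0)$, so $A$ has no nonzero nilpotents and no zerodivisors coming from $T$, and since $A\subseteq T$ this means $A$ is a domain.

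In more detail: first I would observe $\fm\notin\Ass(T)$, which is needed for \cref{lemma:intermediate-p-subring,lemma:adjoin-coset}; this holds because every element of the prime subring $\Pi$ is regular, so in particular some integer is a nonzerodivisor, which (together with $\Pi\cap\fp=(0)$ forcing $\dim T\ge 1$, or simply directly) rules out $\fm$ being associated — more carefully, if $\fm\in\Ass(T)$ then every element of $\fm$ is a zerodivisor, contradicting regularity of any nonzero non-unit integer, or of the prescribed $y_i\in\fm$. Next, apply \cref{lemma:base-ring} to obtain a countably infinite $\fp$-subring $R^{(0)}$ containing $y_it_i$ for suitable units $t_i$. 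Then enumerate a set $\{u_\lambda\}_{\lambda<\mu}$ of coset representatives for $T/\fm^2$, where $\mu\le\sup(\aleph_0,|T/\fm|)$, and build an ascending chain indexed by $\mu$: at successor stage $\lambda+1$ apply \cref{lemma:intermediate-p-subring} with $u=u_\lambda$ to the current ring to get a $\fp$-subring of the same cardinality in whose image $u_\lambda+\fm^2$ lies and for which $IT\cap(\text{ring})=I$ for all finitely generated $I$; at limit stages take unions and apply \cref{lemma:unioning}. Call the union after one full pass $R^{(1)}$. The subtlety is that passing to the union may create new finitely generated ideals whose colon condition has not yet been certified, exactly as in the proof of \cref{lemma:intermediate-p-subring}; so I would iterate the entire pass countably many times, $R^{(0)}\subseteq R^{(1)}\subseteq R^{(2)}\subseteq\cdots$, and set $A=\bigcup_n R^{(n)}$. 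A final application of \cref{lemma:unioning} shows $A$ is a $\fp$-subring (the cardinality bound $|A|\le\sup(\aleph_0,|T/\fm|)$ with equality only if $T/\fm$ is countable is maintained throughout since $|\Omega|$ at each stage is bounded by $\sup(\aleph_0,|T/\fm|)$ and we only take countable or $<|T/\fm|$-indexed unions).

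Now the verification of the hypotheses of \cref{proposition:heitmann-completion}: the map $A\to T/\fm^2$ is onto because every coset representative $u_\lambda$ was handled in pass $R^{(1)}$. For the colon condition, let $I=(a_1,\dots,a_m)\subseteq A$ be finitely generated and $c\in IT\cap A$; choose $N$ with all $a_i,c\in R^{(N)}$; then $I':=(a_1,\dots,a_m)R^{(N)}$ is a finitely generated ideal of $R^{(N)}$ with $c\in I'T\cap R^{(N)}$, and since $R^{(N)}$ was produced by a full pass of \cref{lemma:intermediate-p-subring} closures, $I'T\cap R^{(N)}=I'\subseteq I$, so $c\in I$. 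Hence \cref{proposition:heitmann-completion} gives $\wh A\cong T$. Finally $A\cap\fp=(0)$ and $A\cap\fq=(0)$ for $\fq\in\Ass(T)$ are preserved by all the lemmas used and by unions, and the latter makes $A$ a domain as explained.

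\textbf{Main obstacle.} The delicate point is the bookkeeping that makes the colon condition $IT\cap A=I$ hold for \emph{all} finitely generated ideals of the final ring $A$, not merely of the intermediate rings: each closure pass only controls ideals defined over the ring at the start of that pass, and unions introduce genuinely new ideals, so one must iterate the closure construction countably many times and argue (via the "find $N$ large enough" trick above) that this suffices. Keeping the cardinality bookkeeping compatible with the countable-vs-uncountable dichotomy in \cref{lemma:unioning,lemma:countable-prime-avoidance,lemma:uncountable-prime-avoidance} throughout all the nested transfinite inductions is the other place where care is needed, though it is routine given the cardinality clauses already built into each lemma.
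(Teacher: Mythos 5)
Your proof takes essentially the same approach as the paper: build an ascending transfinite chain of $\fp$-subrings starting from the base ring of \cref{lemma:base-ring}, close up via \cref{lemma:intermediate-p-subring}, pass through limits with \cref{lemma:unioning}, and invoke \cref{proposition:heitmann-completion} at the end. The one divergence is the iteration over countably many passes, which is unnecessary: \cref{lemma:intermediate-p-subring} already produces a ring $S$ with $IT\cap S = I$ for \emph{all} finitely generated ideals of $S$ (not just those pulled back from the starting ring), so a single transfinite pass suffices --- given a finitely generated ideal of the final union with a witness $c\in IT$, all relevant data lands in some $R_\beta$, and the successor stage $R_\alpha \supseteq R_\beta$ (with $\gamma(\alpha)<\alpha$) already satisfies the colon condition, exactly as the paper argues. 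Your verification of the colon condition via ``find $N$ large enough'' is in fact correct because (as you can check by the same find-a-successor argument inside the $N$-th pass) the union $R^{(N)}$ of one full pass does satisfy $I'T\cap R^{(N)}=I'$ for all finitely generated $I'\subseteq R^{(N)}$; but this makes the premise of your ``Main Obstacle'' --- that ``each closure pass only controls ideals defined over the ring at the start of that pass'' --- a slight misreading of what \cref{lemma:intermediate-p-subring} delivers, and it is what led you to the redundant outer iteration. The remaining points ($\fm\notin\Ass(T)$ via regularity of the $y_i$, $A\cap\fp=(0)$ and $A\cap\fq=(0)$ persisting through unions, and $A$ being a domain because $\Ass(T)$ is avoided) are all handled correctly and match the paper.
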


\begin{proof}
Let $\Omega = T/\fm^2$, and well-order $\Omega$ so that every element of $\Omega$ has fewer than $|\Omega$| predecessors.  Let $1$ denote the least element of $\Omega$. Let $(R_1, R_1 \cap \fm)$ be the countably infinite $\fp$-subring of $T$ given by \cref{lemma:base-ring}. So, for every $i = 1,2, \ldots,$ $R_1$ contains $y_it_i$ for some unit $t_i \in T$.  We now recursively define an ascending collection $\{R_{\alpha}\}_{\alpha \in \Omega}$ of infinite quasi-local subrings of $T$. Note that $R_1$ has already been defined. 
 Let $\alpha \in \Omega$ and assume that $R_{\beta}$ has been defined for every $\beta < \alpha$.  If $\gamma(\alpha) < \alpha$, define $R_{\alpha}$ to be the $\fp$-subring obtained from \cref{lemma:intermediate-p-subring} so that $R_{\gamma(\alpha)} \subseteq R_{\alpha} \subseteq T$, $\gamma(\alpha)$ is in the image of the map $R_{\alpha} \longrightarrow T/\fm^2$, $IT \cap R_{\alpha} = I$ for every finitely generated ideal $I$ of $R_{\alpha}$, and $|R_{\alpha}| = |R_{\gamma(\alpha)}|$.  If $\gamma(\alpha) = \alpha$, define $R_{\alpha} = \bigcup_{\beta < \alpha}R_{\beta}$.  Now define $A = \bigcup_{\alpha \in \Omega}R_{\alpha}$.  By \cref{lemma:unioning}, $A \cap \fp = (0)$ and if $\fq \in \Ass(T)$ then $A \cap \fq = (0)$. By construction, the map $A \longrightarrow T/\fm^2$ is onto. Let $I\subseteq A$ be a finitely generated ideal and let $c \in IT \cap A$.  Write $I = (a_1, a_2, \ldots,a_m)$ for $a_i \in A$.  Choose $\alpha$ so that $\gamma(\alpha) < \alpha$ and $c, a_1, a_2, \ldots ,a_m \in R_{\gamma(\alpha)}$. Now, $c \in (a_1,a_2, \ldots ,a_m)T \cap R_{\alpha} = (a_1, a_2, \ldots,a_m)R_{\alpha} \subseteq I$.  It follows that $IT \cap A = I$ for all finitely generated ideals $I\subseteq A$. By \cref{proposition:heitmann-completion}, $\wh{A} \cong T$, and since $\fq \cap A = (0)$ for all $\fq \in \Ass(T)$, $A$ is a domain.
\end{proof}

Having developed the necessary completion machinery, we can now proceed with our investigation of weakly $F$-regular precompletions. We recall the following characterization of weak $F$-regularity in terms of an approximately Gorenstein sequence.

\begin{lemma} \cite[Proposition 8.8]{HH90}\label{lemma:tc-approx-gor}
    Let $(R,\fm,\sk)$ be a local approximately Gorenstein ring of prime characteristic $p>0$ with approximately Gorenstein sequence $\{I_s\}$. Then $R$ is weakly $F$-regular if and only if $I_s=I_s^*$ for all $t$.
\end{lemma}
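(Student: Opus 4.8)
The forward direction is immediate from the definition of weak $F$-regularity applied to the ideals $I_s$, so the plan is to prove the converse: assuming $I_s = I_s^*$ for all $s$, I will show $I = I^*$ for an arbitrary ideal $I \subseteq R$. Fix $x \in I^*$ and suppose for contradiction that $x \notin I$. Since $\{I_s\}$ is an approximately Gorenstein sequence we have $I_s \subseteq \fm^N$ for all $s \gg 0$, so the Krull intersection theorem (applied in $R/I$) gives $\bigcap_s (I + I_s) = I$, and hence I may choose an index $s$ with $x \notin I + I_s$. The whole argument is then pushed down into the zero-dimensional Gorenstein local ring $\bar R := R/I_s$, whose socle is one-dimensional over $\sk$ and is generated by the image of a chosen socle generator $y_s$ of $R/I_s$; in particular $y_s \notin I_s$.

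The heart of the proof is to produce a single multiplier $z' \in R$ with $z' I \subseteq I_s$ and $z' x \equiv y_s \pmod{I_s}$. For this I would use the self-injectivity of the Artinian Gorenstein ring $\bar R$: Matlis duality over $\bar R$ gives $\big(0 :_{\bar R} (0 :_{\bar R} \bar I)\big) = \bar I$ for the image $\bar I$ of $I+I_s$, so from $\bar x \notin \bar I$ there is $\bar z$ annihilating $\bar I$ with $\bar z\,\bar x \neq 0$. The nonzero ideal $(\bar z\, \bar x)$ meets the one-dimensional socle $\sk\cdot\bar y$ nontrivially, so after multiplying $\bar z$ by an appropriate ring element and rescaling by a unit I obtain $\bar z'$ with $\bar z'\,\bar I = 0$ and $\bar z'\,\bar x = \bar y$; lifting $\bar z'$ back to $R$ yields the desired $z'$.

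Everything after that is a routine Frobenius manipulation. From $x \in I^*$ pick $c \in R$ lying outside every minimal prime of $R$ with $c x^q \in I^{[q]}$ for all $q = p^e \gg 0$. Then $c(z'x)^q = c(z')^q x^q \in (z')^q I^{[q]} = (z'I)^{[q]} \subseteq I_s^{[q]}$, while $z'x - y_s \in I_s$ forces $(z'x)^q = (z'x-y_s)^q + y_s^q \equiv y_s^q \pmod{I_s^{[q]}}$; combining these, $c\, y_s^q \in I_s^{[q]}$ for all $q \gg 0$, so $y_s \in I_s^* = I_s$, contradicting $y_s \notin I_s$. This shows $x \in I$, hence $R$ is weakly $F$-regular. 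I expect the only genuinely non-formal step to be the middle one: extracting $z'$ is precisely where the Gorenstein hypothesis on the quotients $R/I_s$ (rather than merely their being Artinian) is used, and once $z'$ is in hand the colon-ideal and $q$-th-power bookkeeping is straightforward.
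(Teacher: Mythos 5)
The paper does not prove this lemma; it is cited from Hochster--Huneke \cite[Proposition 8.8]{HH90}. Your argument is correct and is, in substance, the standard proof of that result. The forward direction is trivial. For the converse, the reduction via Krull intersection to an index $s$ with $x\notin I+I_s$ is sound, the use of the double-annihilator identity $(0:(0:\bar I))=\bar I$ in the zero-dimensional Gorenstein quotient $R/I_s$ (together with the fact that every nonzero ideal of an Artinian local ring meets the one-dimensional socle) to extract the multiplier $z'$ is exactly the role the Gorenstein hypothesis plays, and the Frobenius bookkeeping $c\,y_s^q\in I_s^{[q]}$ forcing $y_s\in I_s^*=I_s$ is the standard contradiction. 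This is essentially the Hochster--Huneke argument, so there is no gap and nothing genuinely different to compare against the paper.
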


Here and in the sequel we will employ the ideals $M_{I,y}$ from \cite{LR01}. For an ideal $I\subseteq R$ and an element $y\in R$, we define

\begin{align*}
    M_{I,y}:=\bigcup\limits_{e\geq 1}\bigcap\limits_{f\geq e} (I^{[p^f]}:y^{p^f}).
\end{align*}

\begin{theorem}\label{theorem:f-regular-precompletion-non-gor}
    Let $(T,\fm,\sk)$ be a complete local reduced (approximately Gorenstein) ring of prime characteristic $p>0$ and $\dim(T)\geq 2$. Suppose that $\{I_s\}$ is an approximately Gorenstein sequence of ideals, and let $y_s\in T$ generate the socle of $T/I_s$. Make the additional two assumptions:
\begin{enumerate}
    \item $y_s^{p^e}\not\in I_s^{[p^e]}$ for all $s$ and all $e$;
    \item there exists a nonmaximal prime $\fp\in\Spec^\circ(T)$ such that for all $s$, $M_{I_s,y_s}\subseteq \fp$.
\end{enumerate}
Then there exists a weakly $F$-regular local domain $(A,\fm\cap A)$ such that $\wh{A}\cong T$. Moreover, given countably many regular elements $v_1,v_2,\ldots\in \fm \setminus \fp$, the ring $A$ may be chosen to contain a $T$-associate of each $v_i$.
\end{theorem}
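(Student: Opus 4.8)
The plan is to obtain $A$ from a single application of \cref{theorem:p-subring-precompletion} and then to verify weak $F$-regularity with the tight-closure criterion of \cref{lemma:tc-approx-gor}, playing the hypothesis $M_{I_s,y_s}\subseteq\fp$ off against the defining property $A\cap\fp=(0)$ of a $\fp$-subring.

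First I would arrange the input to \cref{theorem:p-subring-precompletion}. Since $T$ has prime characteristic, its prime subring is $\F_p$, whose nonzero elements are units in $T$ (hence regular) and which meets $\fp$ only in $(0)$. Applying \cref{theorem:p-subring-precompletion} to $T$, the nonmaximal prime $\fp$, and the countable family $\{v_i\}$ (augmented, if it is empty, by a single regular element of $\fm\setminus\fp$, which exists by prime avoidance since $\dim T\geq 2$ and $\Ass T=\min T$) yields a local domain $A\subseteq T$ with $\wh A\cong T$, with $A\cap\fp=(0)$, and containing a $T$-associate of each $v_i$. By its construction, $A$ is Noetherian and satisfies $IT\cap A=I$ for every finitely generated ideal $I$ of $A$ (this is built into the proof of \cref{theorem:p-subring-precompletion} via \cref{proposition:heitmann-completion}); combined with $\wh A\cong T$ this gives, writing $\fm_A=\fm\cap A$, isomorphisms $A/\fm_A^N\cong T/\fm^N$ and the identity $\fm^N\cap A=\fm_A^N$ for every $N$.

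Next I would pin down the relevant approximately Gorenstein sequence for $A$. For each $s$, pick $N$ with $\fm^N\subseteq I_s$; then $A\twoheadrightarrow A/\fm_A^N\cong T/\fm^N$ shows $A\to T/I_s$ is surjective, so $A/(I_s\cap A)\cong T/I_s$ is Artinian Gorenstein. With the identity $\fm^N\cap A=\fm_A^N$ in hand, one checks routinely that $\{I_s\cap A\}$ is an approximately Gorenstein sequence for $A$. Choose $a_s\in A$ mapping to the socle generator $\bar y_s$ of $T/I_s$, so that $a_s-y_s\in I_s$; since $T$ has characteristic $p$ the Frobenius is additive, whence $a_s^{p^e}\equiv y_s^{p^e}\pmod{I_s^{[p^e]}}$ for all $e$.

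The crux is then the following. By \cref{lemma:tc-approx-gor} it is enough to show $(I_s\cap A)^*=I_s\cap A$ in $A$ for every $s$. If not, then since $A/(I_s\cap A)$ is Artinian Gorenstein with socle generated by the image of $a_s$, the nonzero ideal $(I_s\cap A)^*/(I_s\cap A)$ contains that socle, so $a_s\in(I_s\cap A)^*$; hence there is a nonzero $c\in A$ with $c\,a_s^{p^e}\in(I_s\cap A)^{[p^e]}\subseteq I_s^{[p^e]}$ for all $e\gg 0$. Using $a_s^{p^e}\equiv y_s^{p^e}\pmod{I_s^{[p^e]}}$ we get $c\,y_s^{p^e}\in I_s^{[p^e]}$ for all $e\gg 0$, i.e.\ $c\in\bigcap_{f\geq e_0}(I_s^{[p^f]}:y_s^{p^f})\subseteq M_{I_s,y_s}\subseteq\fp$. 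But $c\in A$ and $A\cap\fp=(0)$, forcing $c=0$ --- a contradiction. (Incidentally, hypothesis (1) is subsumed here, since $M_{I_s,y_s}\subseteq\fp\subsetneq T$ already forces $1\notin M_{I_s,y_s}$, i.e.\ $y_s^{p^e}\notin I_s^{[p^e]}$ for all $e$.) I expect the main obstacle to be the bookkeeping relating ideals and Frobenius powers across the inclusion $A\subseteq T$ --- chiefly the isomorphism $A/(I_s\cap A)\cong T/I_s$ and the passage of the tight-closure witness $c$ down into $A$ --- which is precisely where the completion-theoretic output of \cref{theorem:p-subring-precompletion,proposition:heitmann-completion} is needed; granting those, the characteristic-$p$ congruence together with the containment $M_{I_s,y_s}\subseteq\fp$ dispatches the rest.
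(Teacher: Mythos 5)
Your proposal is correct, and it departs from the paper's argument in one substantive way. The paper begins with a prime-avoidance step, replacing each $y_s$ by $y_s+z_s$ (with $z_s\in I_s$) so that the new socle generator is $T$-regular and lies outside $\fp$; these modified $y_s$'s are then fed together with the $v_i$'s into \cref{theorem:p-subring-precompletion}, so that $A$ contains associates $y_st_s$, and the weak $F$-regularity argument uses the unit cancellation $M_{I_s,y_st_s}=M_{I_s,y_s}$ to send a tight-closure witness $c$ into $M_{I_s,y_s}\cap A\subseteq\fp\cap A=(0)$. You bypass the prime-avoidance preprocessing entirely and never feed the $y_s$'s into the precompletion machine: you build $A$ from the $v_i$'s alone (padding with one auxiliary regular element if needed), use the surjection $A\twoheadrightarrow T/I_s$ to produce $a_s\in A$ with $a_s-y_s\in I_s$, and then Frobenius additivity gives $a_s^{p^e}\equiv y_s^{p^e}\pmod{I_s^{[p^e]}}$, so a witness $c$ for $a_s\in(I_s\cap A)^*$ again lands in $M_{I_s,y_s}\cap A=(0)$. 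Both routes reduce to the same mechanism --- the witness must lie in $M_{I_s,y_s}\cap A\subseteq\fp\cap A$ --- but yours trades the prime-avoidance modification and the unit bookkeeping $M_{I_s,y_st_s}=M_{I_s,y_s}$ for the Frobenius congruence $a_s^{p^e}\equiv y_s^{p^e}$, which is a clean alternative. Your observations that $\{I_s\cap A\}$ is an approximately Gorenstein sequence for $A$, that $A/(I_s\cap A)\cong T/I_s$ via $\fm^N\cap A=\fm_A^N$, and that hypothesis (1) is implied by hypothesis (2) (since $y_s^{p^e}\in I_s^{[p^e]}$ for one $e$ would force $1\in M_{I_s,y_s}$) are all correct.
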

\begin{proof}
Let $\min(T)=\{\fq_1,\dots, \fq_n\}$ be the minimal prime ideals of $T$ and fix a natural number $s$. Since $I_s$ is $\fm$-primary and $\fp$ is nonmaximal, we know that $I_s+(y_s)\nsubseteq\fp\cup\bigcup\limits_{j=1}^n\fq_j$. Hence, by prime avoidance, there exists an element $z_s\in I_s$ such that $y_s+z_s\not\in\fq_j$ for all $j$ and $y_s+z_s\not\in\fp$. Note also that $M_{I_s,y_s}=M_{I_s,y_s+z_s}$ and $(y_s+z_s)^{p^e}\not\in I_s^{[p^e]}$ for all $e$ by assumption. Since $T$ is reduced, after replacing $y_s$ with $y_s+z_s$ for each $s$, we may assume that each $y_s$ is a regular element of $T$.

We may now construct a local domain $(A,\fm\cap A)$ via \cref{theorem:p-subring-precompletion} such that $\fp\cap A=(0)$, $\wh{A}\cong T$, and such that for every $i$ there exists a unit $t_i\in T$ where $y_it_i\in A$. Let $\{J_i\}$ be the corresponding approximately Gorenstein sequence for $A$ such that $\wh{J_i}=I_i$. We claim that $A$ is weakly $F$-regular, and it suffices by \cref{lemma:tc-approx-gor} to show that $J_s=J_s^*$ for all $s$. Fix $s$ and note that $y_st_s$ generates the socle of $A/J_s$. It suffices to show that $y_st_s\not\in J_s^*$. If not, there exists $0\neq c\in A$ so that $c(y_st_s)^{p^e}\in J_s^{[p^e]}$ for all $e$. It follows that $c\in M_{I_s,y_st_s}\cap A=M_{I_s,y_s}\cap A\subseteq \fp\cap A=(0)$, a contradiction. Applying \cref{theorem:p-subring-precompletion} to $\{v_i\}\cup\{y_i\}$ in place of $\{y_i\}$ above does not affect the weak $F$-regularity of $A$, so we immediately obtain the second statement as well.
\end{proof}
We are able to extend the previous result by adjoining a formal power series variable via the following:
\begin{proposition}\cite[Proposition 3]{LR01}\label{proposition:multiplier}
    Let $(A,\fm)$ be a reduced local ring of prime characteristic $p>0$, let $I\subseteq A\llbracket t\rrbracket$ be an ideal, and let $x\in A\llbracket t\rrbracket$. Then either $M_{I,x}=A\llbracket t\rrbracket$ or $M_{I,x}\subseteq \fm A\llbracket t\rrbracket$.
\end{proposition}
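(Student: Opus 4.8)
The statement is a dichotomy, so the plan is to prove it in the equivalent form: \emph{if $M_{I,x}\not\subseteq\fm A\llbracket t\rrbracket$ then $M_{I,x}=A\llbracket t\rrbracket$}. First I would dispose of trivialities and set up notation: if $I=A\llbracket t\rrbracket$ or $x=0$ then $M_{I,x}=A\llbracket t\rrbracket$ outright, so assume not; since $A\llbracket t\rrbracket$ is Noetherian, the ideals $\bigcap_{f\geq e}(I^{[p^f]}:x^{p^f})$ increase with $e$ to a limit already attained at some $e_0$, so $M_{I,x}=\bigcap_{f\geq e_0}(I^{[p^f]}:x^{p^f})$ is a genuine ideal and every element of it kills $x^{p^f}$ modulo $I^{[p^f]}$ for all $f\geq e_0$. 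Then I would pick $g\in M_{I,x}\setminus\fm A\llbracket t\rrbracket$: expanding $g=\sum a_it^i$, some coefficient is a unit of $A$, and if $n$ is the least index with $a_n\notin\fm$ then $g=w_0+t^nu$ with $u=a_n+a_{n+1}t+\cdots$ a unit of $A\llbracket t\rrbracket$ and $w_0=a_0+\cdots+a_{n-1}t^{n-1}\in\fm A\llbracket t\rrbracket$; replacing $g$ by $u^{-1}g\in M_{I,x}$ I may assume $g=t^n+w$ with $w\in\fm A\llbracket t\rrbracket$. If $n=0$ then $g$ is a unit of $A\llbracket t\rrbracket$ and $M_{I,x}=A\llbracket t\rrbracket$; so assume $n\geq1$.

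The key step is to reduce modulo $\fm A\llbracket t\rrbracket$ and work in the discrete valuation ring $\sk\llbracket t\rrbracket=A\llbracket t\rrbracket/\fm A\llbracket t\rrbracket$. Forming $p^f$-th power ideals commutes with the surjection $\varphi\colon A\llbracket t\rrbracket\to\sk\llbracket t\rrbracket$, so if $c\in M_{I,x}$ then $\varphi(c)\varphi(x)^{p^f}\in\varphi(I)^{[p^f]}$ for $f\geq e_0$, i.e.\ $\varphi(M_{I,x})\subseteq M_{\overline I,\overline x}$ (bars denoting images in $\sk\llbracket t\rrbracket$). A one-line DVR computation shows $M_{\overline I,\overline x}$ is either $(0)$ or all of $\sk\llbracket t\rrbracket$: writing $\overline I=(t^a)$ and $\overline x=t^b\cdot(\text{unit})$ one gets $(\overline I^{[p^f]}:\overline x^{p^f})=(t^{\max(0,(a-b)p^f)})$, whose intersection over $f\geq e$ is $(0)$ when $a>b$ (as $(a-b)p^f\to\infty$) and $\sk\llbracket t\rrbracket$ when $a\leq b$, with the degenerate cases $\overline x=0$ and $\overline I=0$ handled directly. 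Since $\varphi(g)=t^n$ with $n\geq1$ is a nonzero element of $\varphi(M_{I,x})$, the ideal $M_{\overline I,\overline x}$ is nonzero, hence equals $\sk\llbracket t\rrbracket$; thus $\overline x^{p^f}\in\overline I^{[p^f]}$ for all $f\gg0$, which in a DVR forces $\overline x\in\overline I$. Lifting gives $x=\xi+x'$ with $\xi\in I$ and $x'\in\fm A\llbracket t\rrbracket$; because $\binom{p^f}{k}\equiv0\pmod p$ for $0<k<p^f$ we get $x^{p^f}=\xi^{p^f}+(x')^{p^f}$ with $\xi^{p^f}\in I^{[p^f]}$ and $(x')^{p^f}\in\fm^{[p^f]}A\llbracket t\rrbracket$, and moreover $g(x')^{p^f}=gx^{p^f}-g\xi^{p^f}\in I^{[p^f]}$ for $f\geq e_0$, so $g\in M_{I,x'}\setminus\fm A\llbracket t\rrbracket$. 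Hence the whole statement reduces to the special case $x\in\fm A\llbracket t\rrbracket$: granting the proposition there, $M_{I,x'}=A\llbracket t\rrbracket$, so $(x')^{p^f}\in I^{[p^f]}$ and therefore $x^{p^f}\in I^{[p^f]}$ for $f\gg0$, i.e.\ $M_{I,x}=A\llbracket t\rrbracket$.

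The remaining case $x\in\fm A\llbracket t\rrbracket$ is where I expect the real difficulty, since now the previous reduction is vacuous ($\overline x=0$), and reducing modulo any fixed power $\fm^NA\llbracket t\rrbracket$ is useless because $x^{p^f}\in\fm^{[p^f]}A\llbracket t\rrbracket\subseteq\fm^NA\llbracket t\rrbracket$ as soon as $p^f\geq N$. Here I would exploit that $A$ is reduced: writing $\fp_1,\dots,\fp_s$ for the minimal primes of $A$, the inclusion $A\llbracket t\rrbracket\hookrightarrow\prod_i(A/\fp_i)\llbracket t\rrbracket$ is module-finite, each factor is a power series ring over a domain, and the image of $g=t^n+w$ in the $i$-th factor is again of the shape ``$t^n+(\text{element of the maximal ideal})$'', hence outside $\fm(A/\fp_i)\llbracket t\rrbracket$. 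Applying the proposition over each domain $A/\fp_i$---the essential base case, where $\operatorname{ord}_t$ is a genuine valuation---gives $\pi_i(x)^{p^f}\in\pi_i(I)^{[p^f]}$ for $f\gg0$ and all $i$, i.e.\ $x^{p^f}\in I^{[p^f]}\bigl(\prod_i(A/\fp_i)\llbracket t\rrbracket\bigr)\cap A\llbracket t\rrbracket$ for $f\gg0$. The final and most delicate task is to descend this back to $x^{p^f}\in I^{[p^f]}$ inside $A\llbracket t\rrbracket$: this is not automatic since the extension is not flat, but it should follow by feeding in the relation $gx^{p^f}\in I^{[p^f]}$ together with $g\notin\fm A\llbracket t\rrbracket$ and a careful comparison of the torsion/primary components of $I^{[p^f]}$ that is uniform in $f$. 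This descent (and the domain base case feeding it) is the main obstacle; everything preceding it is straightforward bookkeeping.
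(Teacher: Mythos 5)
Your reduction to the case $x\in\fm A\llbracket t\rrbracket$ is correct: passing to $\sk\llbracket t\rrbracket$, the DVR calculation does show $M_{\overline I,\overline x}\neq(0)\Rightarrow\overline x\in\overline I$, and since $\varphi(g)=t^n\neq0$ one may replace $x$ by $x-\xi$ with $\xi\in I$ and thereby assume $x\in\fm A\llbracket t\rrbracket$. But that is where the argument stops being a proof. The two steps you yourself flag are genuine, unfilled gaps, and they are not minor. First, the ``domain base case'' is never proven, and it is not clear why it should be easier: for $A$ a domain you have already used up the residue-field reduction, so you are left with exactly the hard configuration $x\in\fm A\llbracket t\rrbracket$, $g=t^n+w$, $n\geq1$, $gx^{p^f}\in I^{[p^f]}$, and the remark that $\operatorname{ord}_t$ is a valuation gives no obvious leverage, since $I$ is an arbitrary ideal of $A\llbracket t\rrbracket$ and neither $I^{[p^f]}$ nor $(I^{[p^f]}:x^{p^f})$ is monomial or $\operatorname{ord}_t$-determined. (Indeed, for a fixed $f$ the implication ``$(I^{[p^f]}:x^{p^f})\not\subseteq\fm A\llbracket t\rrbracket\Rightarrow x^{p^f}\in I^{[p^f]}$'' is simply false --- e.g.\ $A$ a DVR with uniformizer $u$, $I=(t^2,ut)$, $x=t$ gives $(I^{[q]}:x^q)=(t^q,u^q)\ni t^q\notin\fm A\llbracket t\rrbracket$ while $t^q\notin I^{[q]}$ --- so any domain argument must genuinely exploit the intersection over all $f$, which you don't address.)

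Second, the descent from $\prod_i(A/\fp_i)\llbracket t\rrbracket$ back to $A\llbracket t\rrbracket$ is not established, and the obvious mechanism fails. Letting $B=\prod_i A/\fp_i$ and $\fc=(A:_A B)$ the conductor, the information $x^{p^f}\in I^{[p^f]}B\llbracket t\rrbracket$ only yields $\fc\,x^{p^f}\subseteq I^{[p^f]}$, i.e.\ $\fc\subseteq M_{I,x}$; but if $A$ is reduced local and not a domain then $\fc\subseteq\fm$, so this sits inside $\fm A\llbracket t\rrbracket$ and is useless for forcing $M_{I,x}=A\llbracket t\rrbracket$. You correctly observe that one must feed in $g\notin\fm A\llbracket t\rrbracket$ together with $gx^{p^f}\in I^{[p^f]}$, but the proposed ``comparison of primary components of $I^{[p^f]}$ uniform in $f$'' is exactly the content of the proposition and is not carried out. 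As it stands the proposal is a correct reduction plus a plan, not a proof; note also that the paper does not reprove this statement but cites \cite[Proposition~3]{LR01}, whose argument does not proceed by decomposing $A$ into domains.
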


\begin{corollary}\label{corollary:f-regular-power-series-precompletion-non-gor}
    Let $(T,\fm)$ be a complete local reduced (approximately Gorenstein) ring of prime characteristic $p>0$ with $\dim T\geq 1$. Suppose that $\{I_s\}$ is an approximately Gorenstein sequence of ideals, and let $y_s\in T$ generate the socle of $T/I_s$. If $y_s\not\in I_s^{[p^e]}$ for all $s$ and all $e$ (for example, we may take $T$ to be $F$-pure), then there exists a weakly $F$-regular local domain $A$ such that $\wh{A}\cong T\llbracket X\rrbracket$. Moreover, given countably many regular elements $v_1,v_2,\ldots\in (\fm,X)\setminus \fm T\llbracket X\rrbracket$, we may choose $A$ to contain a $T\llbracket X\rrbracket$-associate of each $v_i$.
\end{corollary}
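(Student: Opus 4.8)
The plan is to apply \cref{theorem:f-regular-precompletion-non-gor} to the complete local ring $T\llbracket X\rrbracket$ in place of $T$. First I would check the structural hypotheses: $T\llbracket X\rrbracket$ is complete local with maximal ideal $\fn:=(\fm,X)$, has prime characteristic $p$, is reduced (a power series ring over a reduced Noetherian ring is reduced, since $T\hookrightarrow\prod_{\fq\in\min T}T/\fq$ induces $T\llbracket X\rrbracket\hookrightarrow\prod_{\fq\in\min T}(T/\fq)\llbracket X\rrbracket$, a finite product of domains), and satisfies $\dim T\llbracket X\rrbracket=\dim T+1\geq 2$.

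Next I would produce an explicit approximately Gorenstein sequence for $T\llbracket X\rrbracket$, namely $I_s':=I_sT\llbracket X\rrbracket+(X^s)$, together with its socle generators $y_s':=y_sX^{s-1}$. From the isomorphism $T\llbracket X\rrbracket/I_s'\cong (T/I_s)[X]/(X^s)$, a direct socle computation shows this quotient is Artinian local with one-dimensional socle generated by the image of $y_sX^{s-1}$: an element $\sum_{i=0}^{s-1}a_iX^i$ is annihilated by $X$ only if it equals $a_{s-1}X^{s-1}$, and is then annihilated by $\fm$ only if $a_{s-1}\in\Soc(T/I_s)=(y_s)$. Hence each $T\llbracket X\rrbracket/I_s'$ is Gorenstein. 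The chain $\{I_s'\}$ is visibly descending, and it is cofinal in the powers of $\fn$ because $I_s\subseteq\fm^N$ for $s\gg 0$ while $X^s\in\fn^N$ once $s\geq N$; so $\{I_s'\}$ is an approximately Gorenstein sequence with socle generators $\{y_s'\}$.

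I would then verify conditions (1) and (2) of \cref{theorem:f-regular-precompletion-non-gor} for $(T\llbracket X\rrbracket,\fn)$ with these data. For (1): since $(I_s')^{[p^e]}=I_s^{[p^e]}T\llbracket X\rrbracket+(X^{sp^e})$ and $(y_s')^{p^e}=y_s^{p^e}X^{(s-1)p^e}$ with $(s-1)p^e<sp^e$, a relation $(y_s')^{p^e}\in(I_s')^{[p^e]}$ would force the $X^{(s-1)p^e}$-coefficient $y_s^{p^e}$ to lie in $I_s^{[p^e]}$, contrary to the hypothesis (which holds in particular when $T$ is $F$-pure, as then Frobenius closure is trivial and $y_s\notin I_s$). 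For (2): applying \cref{proposition:multiplier} with $A=T$, $t=X$, $I=I_s'$, and $x=y_s'$ gives that $M_{I_s',y_s'}$ is either all of $T\llbracket X\rrbracket$ or contained in $\fm T\llbracket X\rrbracket$, and the first alternative is excluded by (1) (it would give $(y_s')^{p^f}\in(I_s')^{[p^f]}$ for $f\gg 0$); hence $M_{I_s',y_s'}\subseteq\fp:=\fm T\llbracket X\rrbracket$ for every $s$. Since $T\llbracket X\rrbracket/\fm T\llbracket X\rrbracket\cong\sk\llbracket X\rrbracket$ is a domain and $X\notin\fm T\llbracket X\rrbracket$, the ideal $\fp$ lies in $\Spec^\circ(T\llbracket X\rrbracket)$, so condition (2) holds with this single prime for all $s$.

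\cref{theorem:f-regular-precompletion-non-gor} then yields a weakly $F$-regular local domain $A$ with $\wh A\cong T\llbracket X\rrbracket$, and its closing clause---applied with the prime $\fp=\fm T\llbracket X\rrbracket$, so that ``$\fm\setminus\fp$'' there reads ``$(\fm,X)\setminus\fm T\llbracket X\rrbracket$'' here---lets us additionally require that $A$ contain a $T\llbracket X\rrbracket$-associate of each prescribed regular element $v_i$, giving the asserted ``moreover''. I expect the only step requiring genuine care is the socle computation identifying $y_sX^{s-1}$ as a generator of $\Soc(T\llbracket X\rrbracket/I_s')$, along with the coefficient comparison it feeds into for condition (1); the rest is routine bookkeeping on top of the machinery of \cref{section:non-gor}.
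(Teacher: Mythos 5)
Your argument is correct and follows the paper's strategy --- pass to $T\llbracket X\rrbracket$, use the approximately Gorenstein sequence $L_s:=(I_s,X^s)$, verify conditions (1)--(2) of \cref{theorem:f-regular-precompletion-non-gor} via \cref{proposition:multiplier}, and apply that theorem with $\fp=\fm T\llbracket X\rrbracket$. The one place you diverge is the socle computation, and there your version is the correct one. The paper takes $z_s:=y_s+X^s$ as the socle generator of $T\llbracket X\rrbracket/L_s$; but since $X^s\in L_s$, this is just the class of $y_s$, and $y_s$ does \emph{not} lie in $\Soc(T\llbracket X\rrbracket/L_s)$ when $s>1$, because $X\cdot y_s\notin L_s$ (as $y_s\notin I_s$). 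As you compute, the socle of $T\llbracket X\rrbracket/L_s\cong (T/I_s)[X]/(X^s)$ is spanned by $y_sX^{s-1}$: for instance with $T=\sk\llbracket t\rrbracket$ and $I_s=(t^s)$, the socle of $\sk[t,X]/(t^s,X^s)$ is $(t^{s-1}X^{s-1})$, not $(t^{s-1})$. The slip propagates into the paper's displayed computation of $M_{L_s,z_s}$, whose third equality silently drops a summand $(X^{sp^f})$ from $(L_s^{[p^f]}:z_s^{p^f})$. Because the proof of \cref{theorem:f-regular-precompletion-non-gor} genuinely relies on the chosen elements being socle generators --- it reduces weak $F$-regularity of $A$ to the assertion that $y_st_s\notin J_s^*$, where $y_st_s$ spans $\Soc(A/J_s)$ --- your substitution of $y_sX^{s-1}$ is essential rather than cosmetic, and your verifications of conditions (1), (2), and the ``moreover'' clause with that generator all go through cleanly.
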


\begin{proof}
     For each $s$, let $L_s:=(I_s,X^s)$. Note that $\{L_s\}$ is an approximately Gorenstein sequence for $T\llbracket X\rrbracket$ and $z_s:=y_s+X^s$ generates the socle of $T\llbracket X\rrbracket/L_s$. By our assumptions we know that $z_s^{p^e}=y_s^{p^e}+X^{sp^e}\not\in(I_s^{[p^e]}, X^{sp^e})= L_s^{[p^e]}$. Let $\fp:=\fm T\llbracket X\rrbracket$ and note that $z_s\not\in\fp$ for all $s$. By \cref{proposition:multiplier}, we know that for any fixed $s$, either $M_{L_s,z_s}=T\llbracket X\rrbracket$ or $M_{L_s,z_s}\subseteq \fp$. However, it must be the latter because
    \begin{align}
        M_{L_s,z_s}&=\bigcup\limits_{e\geq 1}\bigcap\limits_{f\geq e}\left(L_s^{[p^f]}:_{T\llbracket X\rrbracket} z_s^{p^f}\right)\nonumber\\
        & =\bigcup\limits_{e\geq 1}\bigcap\limits_{f\geq e}\left(\left(I_s^{[p^f]},X^{sp^f}\right):_{T\llbracket X\rrbracket} (y_s^{p^f}+X^{sp^f})\right)\nonumber\\
        & = \bigcup\limits_{e\geq 1}\bigcap\limits_{f\geq e}\left (I_s^{[p^f]}T\llbracket X\rrbracket:_{T\llbracket X\rrbracket} y_s^{p^f}\right)\nonumber\\
        & = \bigcup\limits_{e\geq 1}\bigcap\limits_{f\geq e} (I_s^{[p^f]}:_T y_s^{p^f})T\llbracket X\rrbracket\label{eq:power-series-1}\\
        & = M_{I_s T\llbracket X\rrbracket, y_s}\nonumber\\
        &\neq T\llbracket X\rrbracket\label{eq:power-series-2}
    \end{align}
    where (\ref{eq:power-series-1}) follows from flatness of $T\rightarrow T\llbracket X\rrbracket$ and (\ref{eq:power-series-2}) from the fact that $y_s^{p^e}\not\in I_s^{[p^e]}$. We may now apply \cref{theorem:f-regular-precompletion-non-gor} to $\fp\subseteq T\llbracket X\rrbracket$ to obtain both conclusions simultaneously.
\end{proof}

\subsection{A weakly \texorpdfstring{$F$}{F}-regular UFD which is not Cohen--Macaulay}
We are now able to deduce \cref{maintheorem:noncm} from \cref{corollary:f-regular-power-series-precompletion-non-gor}.
\begin{theorem}\label{corollary:non-cm-UFD}
    There exists a weakly $F$-regular local UFD $A$ which is not Cohen--Macaulay.
\end{theorem}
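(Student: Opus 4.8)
The plan is to feed a concrete non-Cohen--Macaulay ring into \cref{corollary:f-regular-power-series-precompletion-non-gor} and then upgrade the weakly $F$-regular domain it produces to a UFD. Take $T=\mathbb{F}_p\llbracket a,b,c,d\rrbracket/\bigl((a,b)\cap(c,d)\bigr)=\mathbb{F}_p\llbracket a,b,c,d\rrbracket/(ac,ad,bc,bd)$, the completed face ring of two coordinate $2$-planes meeting at the origin of $\mathbb{A}^4$. Then $T$ is complete local reduced of dimension $2$ with $\depth T=1$ (its punctured spectrum $V(a,b)\cup V(c,d)$ minus the closed point is disconnected, so $H^1_\fm(T)\neq 0$), hence $T$ is not Cohen--Macaulay; being a Stanley--Reisner ring it is Frobenius split, in particular $F$-pure, so by the parenthetical remark in \cref{corollary:f-regular-power-series-precompletion-non-gor} the socle condition on an approximately Gorenstein sequence of $T$ holds automatically. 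Applying that corollary yields a weakly $F$-regular local domain $A$ with $\widehat A\cong T\llbracket X\rrbracket$. Since $\depth T\llbracket X\rrbracket=\depth T+1=2<3=\dim T\llbracket X\rrbracket$, the completion $\widehat A$ is not Cohen--Macaulay, and because depth and dimension are unchanged under completion, $A$ is not Cohen--Macaulay either.

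It remains to arrange that $A$ is a UFD. Note first that $A$, being a weakly $F$-regular local domain, is already normal, so the only issue is to kill $\Cl(A)$. Since $\widehat A=T\llbracket X\rrbracket$ has depth $2$ and contains no nonzero integer as a zerodivisor (the prime ring $\mathbb{F}_p$ is a field), Heitmann's characterization of completions of unique factorization domains \cite{Hei93} applies to it; moreover the local UFD realizing that conclusion is built, exactly as in \cref{theorem:p-subring-precompletion}, as a transfinite union of ``small'' subrings of $T\llbracket X\rrbracket$, where at suitable stages one adjoins an element forcing a would-be height-one prime to become principal. The plan is to run a single transfinite construction performing simultaneously Heitmann's principalization steps and the steps of \cref{theorem:p-subring-precompletion}/\cref{corollary:f-regular-power-series-precompletion-non-gor} (surjectivity onto $T\llbracket X\rrbracket/\fm^2$, the colon condition $IT\llbracket X\rrbracket\cap A=I$ of \cref{proposition:heitmann-completion}, and the inclusion in $A$ of a $T\llbracket X\rrbracket$-associate of each relevant socle generator $y_s$, which by \cref{lemma:tc-approx-gor} together with $M_{I_s,y_s}\subseteq\fp$ is all that weak $F$-regularity of $A$ requires). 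All of these adjunctions are of the $\fp$-subring type developed throughout \cref{section:non-gor}, taken with $\fp=\fm T\llbracket X\rrbracket$, so they are mutually compatible, and the resulting union $A$ is at once a domain with $\widehat A\cong T\llbracket X\rrbracket$, weakly $F$-regular, and a UFD; non-Cohen--Macaulayness was established in the first paragraph.

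I expect the one genuine obstacle to be this interleaving: one must verify that Heitmann's UFD-producing adjunctions can be carried out while staying inside the class of $\fp$-subrings and without disturbing $\fp\cap A=(0)$, the presence of the socle associates, or the hypotheses of \cref{proposition:heitmann-completion}---none of which interacts with tight closure, so the verification is bookkeeping rather than substance, but it does have to be written out. (An alternative worth attempting first: if one instead starts from a complete local normal \emph{domain} $T$ that is $F$-pure but not Cohen--Macaulay, for instance the completed affine cone over an ordinary elliptic curve, then the $T\llbracket X\rrbracket$-associate $x\in A$ of $X$ furnished by \cref{corollary:f-regular-power-series-precompletion-non-gor} is a prime element of $A$, since $A/xA$ embeds in $T\llbracket X\rrbracket/(X)=T$, and Nagata's criterion reduces the claim to showing that $A[1/x]$ is a UFD; whether this lower-dimensional problem is actually easier than the direct approach is unclear, so I would keep the interleaved construction as the main route.)
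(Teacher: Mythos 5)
Your first paragraph is fine: feeding a complete local reduced $F$-pure non--Cohen--Macaulay $T$ into \cref{corollary:f-regular-power-series-precompletion-non-gor} does produce a weakly $F$-regular local domain $A$ with $\widehat A\cong T\llbracket X\rrbracket$ that is not Cohen--Macaulay. The paper's proof of \cref{corollary:non-cm-UFD} does exactly that, but with a different, and much more consequential, choice of $T$.

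The paper's trick is to make $T$ itself a complete local UFD of $\depth\geq 3$ (Bertin's invariant ring $\sk[x_1,\dots,x_4]^{\Z/4\Z}$ in characteristic $2$, completed). Then Scheja's theorem gives that $T\llbracket X\rrbracket$ is again a UFD, so $\widehat A$ is a UFD, and the Bourbaki result (\cite[Ch.~VII, \S3.6, Prop.~4]{Bou89}) \emph{descends} factoriality from $\widehat A$ to $A$ for free, with no further transfinite construction. Your $T$ is the union of two coordinate $2$-planes, which is not even a domain; $T\llbracket X\rrbracket$ is then nowhere near a UFD, and this cheap descent is unavailable to you, which is exactly why you were forced into the second paragraph.

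That second paragraph contains the genuine gap, and I would not call it bookkeeping. Heitmann's principalization steps do not live in the $\fp$-subring framework of \cref{section:non-gor}; they live in the N-subring framework of \cref{section:gor}, whose definition carries the extra height-one condition (condition (3)) on $\Ass(T/tT)$ that is what actually forces the limit to be a UFD. The two notions are genuinely different, and \cref{lemma:transcendental}, \cref{lemma:yt-in-ring}, etc.\ do not give you that extra condition. Moreover, \cref{section:gor}'s N-subring machinery (in particular \cref{lemma:N-subring-base-ring} and \cref{theorem:p-subring-ufd-precompletion}) is only written to place a \emph{single} socle-generator associate $yt$ in the ring, which is why \cref{theorem:f-regular-precompletion-ufd} needs $T$ Gorenstein so that one parameter ideal suffices. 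Your non-Gorenstein $T\llbracket X\rrbracket$ has a genuinely infinite approximately Gorenstein sequence, and you would have to extend the N-subring results to adjoin associates of countably many $y_s$ while preserving the N-subring axioms, the "prime stays prime" property needed for \cref{lemma:N-subring-unioning}, and the $\fp$-avoidance. That is a real extension of the paper's Section~3, not a routine verification; the paper deliberately avoids needing it by choosing $T$ to be a UFD in the first place.
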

\begin{proof}
    Let $T$ be any complete local UFD with $\depth T\geq 3$ which is $F$-pure and not Cohen--Macaulay. Since $T$ is reduced and complete, $T$ is approximately Gorenstein and so the conditions of \cref{corollary:f-regular-power-series-precompletion-non-gor} apply to give a weakly $F$-regular local domain $A$ such that $\wh{A}\cong T\llbracket X\rrbracket$. Moreover, $T\llbracket X\rrbracket$ is a (non-Cohen--Macaulay) UFD by \cite[Satz 2]{Sch67}, hence so is $A$ by \cite[Ch. VII, \S3.6, Proposition 4]{Bou89}, as desired.

    We can produce specific examples of $T$ with the above properties as follows. Let $\sk$ be a field of characteristic $2$ and let $R=\sk[x_1,x_2,x_3,x_4]^{\Z/4\Z}$ be the invariant subring of $\sk[x_1,x_2,x_3,x_4]$ with respect to the action which cyclically permutes the variables. Let $\fn=(x_1,x_2,x_3,x_4)\cap R$, and finally let $T:=\wh{R_\fn}$. It is known that $R$ is a non-Cohen--Macaulay UFD \cite{Ber67} which is $F$-pure \cite[Proposition 2.4(a)]{Gla95}, and the same is true for $T$ by \cite{FG75}. The computation for $\depth T=3$ is handled in \cite[Ch. VI, Proposition 2.3]{AF78}.
\end{proof}

\section {\texorpdfstring{$F$}{F}-regular UFD precompletions of a complete local Gorenstein ring}\label{section:gor}
The goal of this section is to prove \cref{maintheorem:precompletion-2}. Analogous to the previous section, the ring $A$ which we produce will be $F$-regular. The improvement is that $A$ will also be a UFD, but the compromise is that we will require $T$ to be Gorenstein. Recall that a ring $R$ is \emph{$F$-regular} if $S^{-1}R$ is weakly $F$-regular for all multiplicative subsets $S\subseteq R$. We do not distinguish in this section between $F$-regularity and weak $F$-regularity because the two notions coincide in the Gorenstein setting \cite[Corollary 4.7(a)]{HH94}.

The construction of our UFD $A$ is similar to the construction of $A$ in the previous section. One difference is that in this section, we use the concept of N-subrings of $T$ in place of $\fp$-subrings from the previous section.

\begin{definition} \cite{Hei93}
Let $(T,\fm)$ be a complete local ring and let $(R, R \cap \fm)$ be a quasi-local unique factorization domain contained in $T$ satisfying
\begin{enumerate}
\item $|R| \leq \sup(\aleph_0, |T/\fm|)$ with equality only if $T/\fm$ is countable,\label{item:N-subring-1}
\item $R \cap Q = (0)$ for all $Q \in \Ass(T)$, and\label{item:N-subring-2}
\item if $t \in T$ is regular and $J \in \Ass(T/tT)$, then ht$(R \cap J) \leq 1$. \label{item:N-subring-3}
\end{enumerate}
Then $R$ is called an \emph{N-subring of $T$}.
\end{definition}

The following result is  analogous to \cref{lemma:transcendental} from the previous section. That is, the result gives sufficient conditions on an element $x$ of $T$ so that, given an N-subring $R$ of $T$, the ring $R[x]_{(R[x] \cap \fm)}$ is an N-subring.

\begin{lemma}\label{lemma:N-subring-transcendental}\cite[Lemma 11]{Loe97}
Let $(T,\fm)$ be a complete local ring and $\fp \in \Spec(T)$. Let $(R,R \cap \fm)$ be an N-subring of $T$ with $R \cap \fp = (0)$.  Suppose $C \subseteq \Spec(T)$ satisfies the following conditions:
\begin{enumerate}
\item $\fm \not\in C$,
\item $\fp \in C$,
\item $\{\fq \in \Spec(T) \, | \, \fq \in \Ass(T/rT) \mbox{ with } 0 \neq r \in R\} \subseteq C$, and
\item $\Ass(T) \subseteq C$.
\end{enumerate}
Let $x \in T$ be such that $x \not\in \fq$ and $x + \fq \in T/\fq$ is transcendental over $R/(R \cap \fq)$ for every $\fq \in C$.  Then $S = R[x]_{(R[x] \cap \fm)}$ is an N-subring of $T$ properly containing $R$, $|S| = \sup(\aleph_0, |R|)$, and $S \cap \fp = (0)$.
\end{lemma}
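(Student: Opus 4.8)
The plan is to establish, in order, that $S$ is a quasi-local UFD contained in $T$, that $R\subsetneq S$ with $|S|=\sup(\aleph_0,|R|)$, that $S\cap\fp=(0)$, and that $S$ satisfies the three defining conditions of an N-subring; everything except condition~(3) is bookkeeping, so I would concentrate the effort there.

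\emph{Structural preliminaries.} Since $\fp\in C$ and $R\cap\fp=(0)$, the natural map $R\to T/\fp$ is injective and $x+\fp$ is transcendental over its image, so $x$ is transcendental over $R$ as an element of $T$. Hence the $R$-subalgebra $R[x]\subseteq T$ is a genuine polynomial ring over the UFD $R$, so it is a UFD, and so is the localization $S=R[x]_{(R[x]\cap\fm)}$; as every element of $R[x]\setminus\fm$ is a unit of $T$, this localization literally sits inside $T$, is quasi-local, and contains $R$ properly because $x\notin R$. The identity $|S|=|R[x]|=\sup(\aleph_0,|R|)$ is immediate, and together with $|R|\le\sup(\aleph_0,|T/\fm|)$ (an N-subring axiom for $R$) it yields axiom~(1) for $S$. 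For axiom~(2) and for $S\cap\fp=(0)$: if $\fq$ is $\fp$ or a member of $\Ass(T)$, then $\fq\in C$ and $R\cap\fq=(0)$; writing any $f\in S\cap\fq$ as $g/h$ with $g\in R[x]$ and $h$ a unit of $T$ gives $g=fh\in R[x]\cap\fq$, and reducing $g=\sum_i r_ix^i$ modulo $\fq$ while using that $x+\fq$ is transcendental over $R\cong R/(R\cap\fq)$ forces each $r_i\in R\cap\fq=(0)$, so $f=0$.

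\emph{Axiom (3), the main step.} Let $t\in T$ be regular and $J\in\Ass(T/tT)$; since $t\in J$ is regular, localizing at $J$ gives $\depth T_J=1$, so the grade of $J$ on $T$ equals $1$. Because $S=R[x]_{(R[x]\cap\fm)}$ and $R[x]\cap J\subseteq R[x]\cap\fm$, one has $\Ht(S\cap J)=\Ht_{R[x]}\fP$ where $\fP:=R[x]\cap J$, so it is enough to show $\Ht_{R[x]}\fP\le 1$. If $R\cap J=(0)$, then $\fP$ misses $R\setminus(0)$, so it extends to a prime of $\Frac(R)[x]$ of the same height, giving $\Ht_{R[x]}\fP\le\dim\Frac(R)[x]=1$. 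Suppose $R\cap J\ne(0)$. Since $R$ is a UFD, choose a prime element $p$ of $R$ lying in $R\cap J$; then $pR[x]$ is a height-one prime of $R[x]$ with $pR[x]\subseteq\fP$, and I may assume $\fP\supsetneq pR[x]$, for otherwise $\Ht_{R[x]}\fP=1$. Pick $g\in\fP\setminus pR[x]$. As $p\in R\setminus(0)$ avoids every associated prime of $T$, $p$ is a regular element of $T$; since $p,g\in J$ and the grade of $J$ on $T$ is $1$, the pair $p,g$ cannot be a $T$-regular sequence, so $g$ is a zerodivisor on $T/pT$ and hence $g\in J'$ for some $J'\in\Ass(T/pT)$. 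The crucial point is that $p\in R\setminus(0)$ forces $J'\in C$, so $x+J'$ is transcendental over $R/(R\cap J')$, whence $R[x]\cap J'=(R\cap J')R[x]$ and every coefficient of $g$ lies in $R\cap J'$. But $R\cap J'$ is a prime of $R$ containing $p$ with $\Ht(R\cap J')\le 1$ (this is axiom~(3) for $R$, applied to the regular element $p$ and to $J'\in\Ass(T/pT)$), so $R\cap J'=pR$; hence $g\in pR[x]$, contradicting the choice of $g$. Therefore $\Ht_{R[x]}\fP\le 1$, which is axiom~(3).

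\emph{The main obstacle.} The hard part is precisely axiom~(3). The hypothesis on $C$ only controls $x$ modulo the associated primes of $T/rT$ for $r\in R$, whereas axiom~(3) concerns $\Ass(T/tT)$ for an arbitrary regular $t\in T$; moreover, since $R$ need not be Noetherian, the usual polynomial-ring height inequality $\Ht\fP\le\Ht(\fP\cap R)+1$ is unavailable. The device that resolves both points is to discard $t$ and work with a prime element $p$ of $R$ inside $R\cap J$, which relocates the problem to $\Ass(T/pT)\subseteq C$ and to height-one data in $R$, where the N-subring property of $R$ and the transcendence hypothesis can be combined. The point needing the most care is the grade computation that turns ``$p,g$ is not a regular sequence'' into ``$g$ lies in an associated prime of $T/pT$.''
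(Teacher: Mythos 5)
The paper cites this statement as \cite[Lemma 11]{Loe97} and does not reproduce a proof, so there is no in-paper argument to compare against; I will therefore assess the proposal on its own terms. Your proof is correct and complete. The structural preliminaries are routine and handled correctly: transcendence of $x$ over $R$ (via $\fp\in C$), the UFD and quasi-local structure of $S=R[x]_{(R[x]\cap\fm)}$, the cardinality estimate, and the coefficient-wise argument for $S\cap\fq=(0)$ when $\fq\in\{\fp\}\cup\Ass(T)$. The substance is axiom~(3), and you have correctly identified both the obstacle and its resolution. The hypothesis only makes $x$ transcendental modulo primes in $\Ass(T/rT)$ for $r\in R$, not for arbitrary regular $t\in T$; and since $R$ is merely a quasi-local UFD, the Noetherian height inequality $\Ht\fP\le\Ht(\fP\cap R)+1$ cannot be invoked. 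Replacing $t$ by a prime element $p\in R\cap J$ (available since $R$ is a UFD and $J$ is prime), noting that $p$ is $T$-regular by axiom~(2) for $R$, and then using $\grade_T(J)=1$ (which follows because $J\in\Ass(T/tT)$ with $t\in J$ regular) to conclude that no $g\in\fP$ can be $T/pT$-regular, is exactly the maneuver that relocates the problem to $J'\in\Ass(T/pT)\subseteq C$, where the transcendence of $x$ modulo $J'$ and axiom~(3) for $R$ (giving $R\cap J'=pR$) force $\fP=pR[x]$ of height one. This is in the spirit of the Heitmann--Loepp N-subring arguments and appears to be a faithful reconstruction of the cited proof.
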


We now show that, given an element $y\in T$ satisfying some specific properties, we can find an appropriate N-subring of $T$ that contains an associate of $y$.

\begin{lemma}\label{lemma:N-subring-base-ring}
Let $(T,\fm)$ be a complete local equicharacteristic ring with $\depth T \geq 1$, and let $\fp$ be a nonmaximal prime ideal of $T$.  Let $y$ be a regular element of $T$ with $y \in \fm \setminus \fp$.  Then there exists a countably infinite N-subring $(R, R \cap \fm)$ of $T$ such that $R \cap \fp = (0)$ and $yt \in R$ for some unit $t \in T$. 
\end{lemma}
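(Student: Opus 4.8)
The plan is to realize $R$ as a one-step N-subring extension of the prime field of $T$, in the spirit of how \cref{lemma:yt-in-ring} and \cref{lemma:base-ring} are used in \cref{section:non-gor}. The N-subring setting is in fact simpler here: a single application of \cref{lemma:N-subring-transcendental} suffices, because when the base ring is a \emph{field}, the auxiliary set of associated primes appearing in hypothesis $(3)$ of that lemma is empty.

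First I would fix the base ring. Let $\Pi$ be the prime subring of $T$ and set $R_0:=\Pi_{(\Pi\cap\fm)}$. Since $T$ is equicharacteristic, $R_0$ is the prime field of $T$: if $\Char T=p$ then $R_0=\Pi=\mathbb{F}_p$, while if $\Char T=0$ then $T/\fm$ has characteristic $0$, so every nonzero integer lies outside $\fm$ and is therefore a unit of $T$, whence $\Pi\cap\fm=(0)$ and $R_0=\mathbb{Q}$. In either case $R_0$ is a countable quasi-local UFD whose nonzero elements are all units of $T$, so $R_0\cap Q=(0)$ for every prime ideal $Q\subsetneq T$. Verifying that $R_0$ is an N-subring is then routine: the cardinality condition holds since $|R_0|\le\aleph_0$; condition $(2)$ holds since $R_0\cap Q=(0)$ for $Q\in\Ass(T)$; and condition $(3)$ holds since for $t\in T$ regular and $J\in\Ass(T/tT)$ the ideal $J$ is proper, so $R_0\cap J=(0)$ and $\Ht(R_0\cap J)=0\le 1$. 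Likewise $R_0\cap\fp=(0)$.

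Next I would locate the right associate of $y$. Put $C:=\{\fp\}\cup\Ass(T)$. Since $R_0$ is a field, every $0\neq r\in R_0$ is a unit of $T$, so $T/rT=0$ and hence $\{\fq\in\Spec(T):\fq\in\Ass(T/rT),\ 0\neq r\in R_0\}=\emptyset$; this makes $C$ satisfy hypotheses $(2)$--$(4)$ of \cref{lemma:N-subring-transcendental} for $R=R_0$, and it satisfies hypothesis $(1)$ as well since $\fp\neq\fm$ and $\fm\notin\Ass(T)$ --- this last point is where $\depth T\geq 1$ is used. Note that $C$ is a finite set of nonmaximal primes and that $y\notin\fq$ for every $\fq\in C$: indeed $y\notin\fp$ by hypothesis, while $y\notin Q$ for $Q\in\Ass(T)$ because $y$ is a regular element of $T$. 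For each $\fq\in C$, let $D_{(\fq)}$ be a full set of coset representatives of the elements of $T/\fq$ that are algebraic over $R_0/(R_0\cap\fq)\cong R_0$, and put $D:=\bigcup_{\fq\in C}D_{(\fq)}$; since $R_0$ is countable, so is $D$. Then \cref{lemma:countable-prime-avoidance} produces a unit $t\in T$ with $yt\notin\bigcup\{\fq+r:\fq\in C,\ r\in D\}$, which by the choice of $D$ says exactly that $yt+\fq\in T/\fq$ is transcendental over $R_0/(R_0\cap\fq)\cong R_0$ for every $\fq\in C$ (and, in particular, $yt\notin\fq$).

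Finally, I would apply \cref{lemma:N-subring-transcendental} with $R=R_0$, the set $C$ above, and $x=yt$ to conclude that $R:=R_0[yt]_{(R_0[yt]\cap\fm)}$ is an N-subring of $T$ with $R\cap\fp=(0)$, with $yt\in R$ for the unit $t$, and with $|R|=\sup(\aleph_0,|R_0|)=\aleph_0$, so that $R$ is countably infinite; this $R$ is the one sought. I do not expect a real obstacle; the only point that requires thought is the choice of the base ring. Taking $R_0$ to be the prime field --- rather than the prime subring $\Pi$ itself, which fails to be a field when $\Char T=0$ --- is precisely what forces condition $(3)$ in the definition of an N-subring to hold for $R_0$ and, at the same time, makes the auxiliary set in hypothesis $(3)$ of \cref{lemma:N-subring-transcendental} empty, so that the whole argument reduces to the finite prime-avoidance statement of \cref{lemma:countable-prime-avoidance}.
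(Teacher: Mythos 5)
Your proof is correct and follows essentially the same approach as the paper: take $R_0 = \Pi_{(\Pi\cap\fm)}$ (the prime field, since $T$ is equicharacteristic), set $C = \{\fp\}\cup\Ass(T)$, use \cref{lemma:countable-prime-avoidance} to find a unit $t$ making $yt$ transcendental modulo each $\fq\in C$, and conclude via \cref{lemma:N-subring-transcendental}. You simply spell out more carefully why $R_0$ is an N-subring, why the auxiliary set in hypothesis (3) of \cref{lemma:N-subring-transcendental} is empty, and where $\depth T\ge 1$ enters (to rule $\fm$ out of $\Ass(T)$) --- details the paper leaves implicit.
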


\begin{proof}
Let $\Pi$ be the prime subring of $T$ and define $R_0$ to be $\Pi_{(\Pi \cap \fm)}$.  Since $T$ is equicharacteristic, $R_0$ is a field. It follows that $R_0$ is an N-subring of $T$ and $R_0 \cap \fp = (0)$. Let $C = \{\fp\} \cup \Ass(T)$, and note that $C$ and $R_0$ satisfy the conditions of \cref{lemma:N-subring-transcendental}.
Let $\fq \in C$ and define $D_{(\fq)} \subset T$ to be a full set of coset representatives of the elements $u + \fq$ of $T/\fq$ that are algebraic over $R_0/(R_0 \cap \fq) \cong R_0$.  Define $D = \bigcup_{\fq \in C}D_{(\fq)}$ and note that $D$ is countable. By hypothesis, $y \not\in \fq$ for all $\fq \in C$. Now use \cref{lemma:countable-prime-avoidance} to find a unit $t$ of $T$ such that $yt \not\in \bigcup \{\fq + r \, | \,\fq \in C, \, r \in D\}.$ Define $R = R_0[yt]_{(R_0[yt] \cap \fm)}$ and note that $R_0 \subseteq R \subseteq T$ and $yt \in R$. By our choice of $t$, $yt + \fq \in T/\fq$ is transcendental over $R_0/(R_0 \cap \fq) \cong R_0$ for all $\fq \in C$. It follows by \cref{lemma:N-subring-transcendental} that $R$ is an N-subring of $T$, $R \cap \fp = (0)$, and $R$ is countably infinite.  
\end{proof}

We use the next theorem to ensure that the ring $A$ we construct satisfies the condition that $IT \cap A = I$ for every finitely generated ideal $I$ of $A$. The theorem is analogous to \cref{lemma:c-in-IS} from the previous section.

\begin{theorem}\label{theorem:N-subring-c-in-IS}
Let $(T,\fm)$ be a complete local ring and let $\fp$ be a nonmaximal prime ideal of $T$.  Let $(R,R \cap \fm)$ be an N-subring of $T$ with $R \cap \fp = (0)$.  Let $I$ be a finitely generated ideal of $R$ and let $c \in IT \cap R$.  Then there exists an N-subring $(S, S \cap \fm)$ of $T$ such that $R \subseteq S \subseteq T$, $|S| = |R|$, $S \cap \fp = (0)$, $c \in IS$, and prime elements of $R$ are prime in $S$.
\end{theorem}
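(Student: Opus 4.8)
The plan is to assemble $S$ from $R$ using only transcendental adjunctions, followed by localizations and (transfinite) unions, exactly as in \cref{lemma:yt-in-ring} and \cref{lemma:intermediate-p-subring}, with \cref{lemma:N-subring-transcendental} playing the role that \cref{lemma:transcendental} played for $\fp$-subrings. Two of the desired conclusions come for free under this strategy: if $w$ is transcendental over an N-subring $R'$ of $T$ modulo every prime of the set $C$ of \cref{lemma:N-subring-transcendental}, then in particular $w$ is transcendental over $R'$ modulo a minimal prime $Q$ of $T$ --- and $R'$ embeds into the domain $T/Q$ since $R'\cap Q=(0)$ --- so $w$ is a genuine indeterminate over $R'$; hence for every prime element $\pi$ of $R'$ the ring $R'[w]/\pi R'[w]\cong(R'/\pi R')[w]$ is a domain, and this property is inherited by the localization $R'[w]_{(R'[w]\cap\fm)}$ and by unions of ascending chains. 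Thus \emph{every prime element of $R$ remains prime in $S$}. Likewise $S\cap\fp=(0)$ and the cardinality bound are delivered by \cref{lemma:N-subring-transcendental} and \cref{lemma:unioning}, exactly as in the proofs cited.

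The one new ingredient is the following fact, which makes the ``last'' adjunction unnecessary: \textbf{if $(R',R'\cap\fm)$ is an N-subring of $T$ and $a\in R'$, then $aT\cap R'=aR'$.} Indeed we may assume $a$ is a nonzero nonunit, so $a$ is a nonzerodivisor of $T$; given $c\in aT\cap R'$ write $c=at$ with $t\in T$ and suppose, for contradiction, that $a\nmid c$ in the UFD $R'$. Choose a prime element $\pi$ of $R'$ with $v_\pi(a)>v_\pi(c)=:m$; writing $c=\pi^m c_0$ and $a=\pi^{m+1}a_0$ with $\pi\nmid c_0$ and cancelling the nonzerodivisor $\pi^m$ yields $c_0=\pi a_0 t\in\pi T$. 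Since $\pi$ is a nonunit, $T/\pi T$ is a nonzero Noetherian ring, so we may pick $J\in\Ass(T/\pi T)$; as $\pi$ and $c_0$ both lie in $\pi T$ they annihilate $T/\pi T$, hence lie in $J$, hence in $R'\cap J$. By condition~(3) of the N-subring definition applied with the regular element $\pi$, the prime $R'\cap J$ has height at most $1$; containing the prime element $\pi\neq0$, it must equal $\pi R'$, forcing $\pi\mid c_0$ --- contrary to the choice of $\pi$.

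We now induct on the number of generators of $I$. If $I=(0)$, or if some generator is a unit of $R$, take $S=R$. Otherwise, after dividing the generators by their greatest common divisor $g$ --- which is legitimate, for $c\in gT\cap R=gR$ by the fact above, so that $c/g\in R$ and any representation of $c/g$ over the divided generators immediately yields one of $c=g\cdot(c/g)$ over $a_1,\dots,a_n$ --- and discarding zero generators, we may assume $\gcd(a_1,\dots,a_n)=1$ and every $a_i$ a nonzero nonunit (hence regular in $T$); fix $c=a_1t_1+\cdots+a_nt_n$ with $t_i\in T$. The plan is to adjoin, one at a time, transcendental elements $b_1,\dots,b_{n-1}$ obtained by perturbing $t_1,\dots,t_{n-1}$ within the syzygy module of $(a_1,\dots,a_n)$ over $T$ --- so that $c=\sum_ia_ib_i$ continues to hold for $b_n:=\bigl(c-\sum_{i<n}a_ib_i\bigr)/a_n\in T$ --- where at the $j$-th stage one uses \cref{lemma:countable} or \cref{lemma:uncountable}, precisely as in \cref{lemma:yt-in-ring}, to select the perturbation so that $b_j$ is transcendental over the current N-subring modulo every prime of the corresponding set $C$; by \cref{lemma:N-subring-transcendental} each successive ring is again an N-subring (still avoiding $\fp$, with the stated cardinality). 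Letting $R_{n-1}$ denote the resulting N-subring, the fact above applied to $R_{n-1}$ and the regular element $a_n$ gives $c-\sum_{i<n}a_ib_i=a_nb_n\in a_nT\cap R_{n-1}=a_nR_{n-1}$, so in fact $b_n\in R_{n-1}$, and $S:=R_{n-1}$ already has $c=\sum_ia_ib_i\in IS$ together with all the remaining required properties.

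The main obstacle, as I see it, is the transcendental-perturbation step. At the $j$-th stage $b_j$ must be drawn from a translate (by $t_j$) of a colon ideal of the shape $\bigl((a_{j+1},\dots,a_n)T:a_j\bigr)$, and \cref{lemma:N-subring-transcendental} applies only when that ideal lies in no single prime of the set $C$ --- in particular in none of the primes of $\bigcup_{0\neq r}\Ass(T/rT)$. For a careless enumeration of the generators this can fail: modulo such a prime the relation $c=\sum_ia_ib_i$ may force $b_j$ to be a fixed element, algebraic over the current ring, and then condition~(3) of the N-subring definition breaks at that prime. Ruling this out --- by choosing the enumeration of the generators adapted to the offending primes (of which, since $\gcd(a_1,\dots,a_n)=1$, none contains every generator), and, where still necessary, interpolating further auxiliary transcendental adjunctions that lower the heights of the offending primes to at most $1$ --- is exactly the delicate combinatorial bookkeeping underlying Heitmann's constructions of noncomplete unique factorization domains (cf.\ \cite{Hei93, Loe97}); it is where essentially all the work resides, and granting it the construction above furnishes the required N-subring $S$.
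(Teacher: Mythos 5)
The paper's proof of this theorem is a one-line citation: it invokes \cite[Theorem~6]{Loe97} together with the note after Lemma~10 of that paper, which together give precisely the stated conclusion including the ``prime elements stay prime'' clause. Your proposal instead attempts to reconstruct Loepp's argument from scratch, so the comparison is really between your sketch and the cited result.

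The piece you do carry out in full --- the ``fact'' that $aT\cap R'=aR'$ for any $a$ in an N-subring $R'$ --- is correct, and your proof of it is exactly the mechanism that makes the N-subring axiom (3) useful: cancel a power of a prime $\pi$ with $v_\pi(a)>v_\pi(c)$, pass to an associated prime $J$ of $T/\pi T$, observe that $R'\cap J$ is a height-one prime of the UFD $R'$ containing $\pi$ and hence equals $\pi R'$, and derive a contradiction. This is the engine of Loepp's Theorem~6 and you have it right. Your observation that transcendental adjunctions, localizations at $R'[w]\cap\fm$, and unions all preserve primeness of elements of $R$ is also correct and matches the note after \cite[Lemma~10]{Loe97}.

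Where the proposal falls short is exactly where you flag it: the transcendental-perturbation step. At stage $j$ the perturbed element $b_j$ must lie in a translate of a colon ideal of the form $\bigl((a_{j+1},\dots,a_n)T:_Ta_j\bigr)$, and \cref{lemma:countable} or \cref{lemma:uncountable} applies only if that ideal is contained in no single prime of the set $C$ from \cref{lemma:N-subring-transcendental}. The obstruction is concrete: if $\fq\in C$ satisfies $(a_{j+1},\dots,a_n)T\subseteq\fq$ but $a_j\notin\fq$, then the colon ideal \emph{is} contained in $\fq$, and $\gcd(a_1,\dots,a_n)=1$ only guarantees that no $\fq\in C$ contains \emph{all} the $a_i$ --- it does not prevent the tail $\{a_{j+1},\dots,a_n\}$ from landing in $\fq$ while $a_j$ escapes. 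Reordering the generators merely relocates the bad primes (swapping $a_1$ and $a_2$ trades primes dividing $a_2$ but not $a_1$ for primes dividing $a_1$ but not $a_2$), and because $C$ also grows as you adjoin elements (it includes $\Ass(T/b_iT)$ for the $b_i$ already adjoined), there is no obvious fixed ordering that works globally. You acknowledge this and appeal to ``delicate combinatorial bookkeeping underlying Heitmann's constructions,'' but you do not actually supply it; granting that step is granting the theorem. This is precisely what \cite[Theorem~6]{Loe97} provides and what the paper's citation is doing. So the proposal correctly identifies the key lemma and the correct skeleton of the induction, but the central step --- the one that justifies the existence of the transcendental perturbation at each stage --- is a genuine unfilled gap.
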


\begin{proof}
The result follows from \cite[Theorem 6]{Loe97} and the note after the proof of Lemma 10 of \emph{op. cit.} which asserts that prime elements of $R$ remain prime in the ring $S$.
\end{proof}

\cref{lemma:N-subring-adjoin-coset} enables us to adjoin an element of a given coset $u + \fm^2$ to an N-subring that will result in another N-subring.  The result is analogous to \cref{lemma:adjoin-coset} from the previous section.

\begin{lemma}\label{lemma:N-subring-adjoin-coset}
Let $(T,\fm)$ be a complete local ring with $\depth T \geq 2$ and let $\fp$ be a nonmaximal prime ideal of $T$. Let $(R,R \cap \fm)$ be an infinite N-subring of $T$ with $R \cap \fp = (0)$ and let $u \in T$.  Then there exists an N-subring $(S, S \cap \fm)$ of $T$ such that $R \subseteq S \subseteq T$, $u + \fm^2$ is in the image of the map $S \longrightarrow T/\fm^2$, $|S| = |R|$, $S \cap \fp = (0)$, and prime elements of $R$ are prime in $S$.
\end{lemma}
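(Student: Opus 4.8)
The plan is to run the proof of \cref{lemma:adjoin-coset} essentially verbatim, substituting N-subrings for $\fp$-subrings and \cref{lemma:N-subring-transcendental} for \cref{lemma:transcendental}. Two features are new: the set $C$ of primes that \cref{lemma:N-subring-transcendental} forces us to avoid is now larger --- it must contain $\Ass(T/rT)$ for every nonzero $r\in R$ --- and we must also track primality of elements through the adjunction.

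First I would set $C=\{\fp\}\cup\Ass(T)\cup\{\fq\in\Spec(T) : \fq\in\Ass(T/rT)\text{ for some }0\neq r\in R\}$ and check the four conditions required in \cref{lemma:N-subring-transcendental}. Conditions (2), (3), (4) there hold by construction, and condition (1), namely $\fm\notin C$, is exactly where the hypothesis $\depth T\geq 2$ is consumed: by the second axiom in the definition of N-subring each nonzero $r\in R$ is a nonzerodivisor on $T$, so if $\fm\in\Ass(T/rT)$ then $\depth(T/rT)=0$, forcing $\depth T=1$, a contradiction; also $\fm\notin\Ass(T)$ and $\fm\neq\fp$. At the same time I would record that $|C|\leq|R|$: $\Ass(T)$ is finite and there are at most $|R|$ choices of $r$, each giving a finite $\Ass(T/rT)$, so $|C|\leq|R|$ as $R$ is infinite. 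Since the members of $C$ are primes distinct from $\fm$, the ideal $\fm^2$ is contained in none of them.

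Next, for each $\fq\in C$ I would let $D_{(\fq)}\subseteq T$ be a full set of coset representatives of those $z+\fq\in T/\fq$ for which $z+u+\fq$ is algebraic over $R/(R\cap\fq)$, and put $D=\bigcup_{\fq\in C}D_{(\fq)}$; since $R$ is infinite there are at most $|R|$ such classes, so $|D_{(\fq)}|\leq|R|$ and hence $|D|\leq|R|$. Then I would apply \cref{lemma:countable} when $R$ is countable (so $C$ is a countable set of nonmaximal primes) and \cref{lemma:uncountable} when it is not --- in that case $|C\times D|\leq|R|<|T/\fm|$, the strict inequality following from the first axiom in the definition of N-subring together with $T/\fm$ being uncountable --- each time with the ideal $I=\fm^2$, to produce $x\in\fm^2$ with $x\notin\bigcup\{\fq+r : \fq\in C,\ r\in D\}$. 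Setting $S=R[x+u]_{(R[x+u]\cap\fm)}$, the element $x+u\in S$ maps onto $u+\fm^2$, and by the choice of $x$ the class $(x+u)+\fq$ is transcendental over $R/(R\cap\fq)$ for every $\fq\in C$ (in particular $x+u\notin\fq$, since otherwise $(x+u)+\fq=0$ would be algebraic over $R/(R\cap\fq)$). Thus \cref{lemma:N-subring-transcendental} applies and gives that $S$ is an N-subring of $T$ with $R\subseteq S\subseteq T$, $S\cap\fp=(0)$, and $|S|=\sup(\aleph_0,|R|)=|R|$.

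It remains to see that prime elements of $R$ stay prime in $S$. Since $R[x+u]$ is a polynomial ring over the UFD $R$, a prime $\pi\in R$ satisfies $R[x+u]/\pi R[x+u]\cong (R/\pi R)[x+u]$, again a domain, and $\pi$ remains a nonunit of $S$ because $\pi\in R\cap\fm\subseteq S\cap\fm$; hence $\pi$ is prime in $S$. (Equivalently, this is the content of the note following the proof of Lemma 10 of \cite{Loe97}, invoked just as in the proof of \cref{theorem:N-subring-c-in-IS}.) The only steps demanding real care are the verification that $\fm\notin C$ --- which is precisely what pins down the hypothesis $\depth T\geq 2$ --- and the cardinal arithmetic needed before applying \cref{lemma:uncountable}; the remainder is a formal transcription of the earlier argument.
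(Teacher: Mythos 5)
Your proposal is correct and follows essentially the same line of argument as the paper's proof: choose the same set $C$, the same coset-representative sets $D_{(\fq)}$, apply \cref{lemma:countable} or \cref{lemma:uncountable} with $I=\fm^2$ to find $x$, and pass through \cref{lemma:N-subring-transcendental}. You supply more of the routine detail (the verification that $\fm\notin C$ via $\depth T\geq 2$, the cardinal arithmetic for $|C\times D|<|T/\fm|$, and the explicit check that primes of $R$ remain prime in $S$) than the paper does, but the structure is identical.
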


\begin{proof}
Define $C = \{\fp\} \cup \Ass(T) \cup \{\fq \in \Spec(T) \, | \, \fq \in \Ass(T/rT) \mbox{ with } 0 \neq r \in R\}$. Note that by our hypotheses, $M \not\in C$. Let $\fq \in C$ and let $D_{(\fq)}$ be a full set of coset representatives of the elements $z + \fq \in T/\fq$ that make $z + u + \fq$ algebraic over $R$. Define $D = \bigcup_{\fq \in C}D_{(\fq)}$.  Since $\fm \not\subseteq \fq$ for all $\fq \in C$, we have $\fm^2 \not\subseteq \fq$ for all $\fq \in C$. Use \cref{lemma:countable} if $R$ is countable and \cref{lemma:uncountable} if not, both with $I = \fm^2$, to find $x \in \fm^2$ such that $x \not\in \bigcup \{\fq + r \, | \,\fq \in C, \, r \in D\}.$  Define $S = R[x + u]_{(R[x + u] \cap \fm)}$.  Note that $R \subseteq S \subseteq T$ and $u + \fm^2$ is in the image of the map $S \longrightarrow T/\fm^2$. Since $x + u$ is transcendental over $R$, prime elements in $R$ are prime in $S$. By \cref{lemma:N-subring-transcendental}, $S$ is an N-subring of $T$, $|S| = |R|$, and $S \cap \fp = (0)$.
\end{proof}

As in the previous section, we build an ascending chain of subrings of $T$ to construct the ring $A$. In this section, of course, we use N-subrings instead of $\fp$-subrings. The next lemma shows that, under certain circumstances, the union of an ascending chain of N-subrings is again an N-subring.  The result is the analogous version of \cref{lemma:unioning} from the previous section.

\begin{lemma}\label{lemma:N-subring-unioning}
Let $(T,\fm)$ be a complete local ring, let $\fp$ be a nonmaximal prime ideal of $T$, and let $(R_0, R_0 \cap \fm)$ be an infinite N-subring of $T$ with $R_0 \cap \fp = (0)$.  Let $\Omega$ be a well-ordered index set with least element $0$ such that either $\Omega$ is countable or, for every $\alpha \in \Omega$, we have $|\{\beta \in \Omega \, | \, \beta < \alpha \}| < |T/\fm|$. Suppose $\{(R_{\beta}, R_{\beta} \cap \fm)\}_{\beta \in \Omega}$ is an ascending collection of quasi-local subrings of $T$ such that if $\gamma(\alpha) < \alpha$ then $R_{\alpha}$ is an N-subring of $T$ with $R_{\gamma(\alpha)} \subseteq R_{\alpha}$, $|R_{\alpha}| = |R_{\gamma(\alpha)}|$, $R_{\alpha} \cap \fp = (0)$ and prime elements of $R_{\gamma(\alpha)}$ are prime in $R_{\alpha}$, while if $\gamma(\alpha) = \alpha$ then $R_{\alpha} = \bigcup_{\beta < \alpha}R_{\beta}$. 

Then $S = \bigcup_{\beta \in \Omega} R_{\beta}$ satisfies the conditions to be an N-subring of $T$ except for possibly the cardinality condition (\ref{item:N-subring-1}), but satisfies the inequality $|S| \leq \sup(|R_0|, |\Omega|).$ Moreover, $S \cap \fp = (0)$ and elements which are prime in some $R_{\alpha}$ are prime in $S$.
\end{lemma}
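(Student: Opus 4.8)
The lemma is a routine but technical "union of a chain" statement, and the strategy is to verify each defining property of an N-subring (except possibly the cardinality bound) for $S = \bigcup_{\beta\in\Omega}R_\beta$, together with the two extra claims ($S\cap\fp=(0)$ and preservation of primes). I would first observe that $S$ is quasi-local with maximal ideal $S\cap\fm$, being a directed union of quasi-local subrings each having maximal ideal the contraction of $\fm$. The key structural point is that the chain is \emph{directed and unions are taken at limit stages}, so that every finite subset of $S$, and in particular every finitely generated ideal or finite list of elements, already lives in some single $R_\alpha$ with $\gamma(\alpha)<\alpha$ (an "N-subring stage"); this reduces most verifications to the corresponding property of that single $R_\alpha$.

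\textbf{Step 1 (UFD and primes).} I would show $S$ is a domain contained in $T$ (immediate, as each $R_\beta$ is), then show $S$ is a UFD by the standard argument: given $0\neq a\in S$, pick $\alpha$ with $a\in R_\alpha$ an N-subring stage, factor $a$ into primes in $R_\alpha$, and use the hypothesis that primes of $R_{\gamma(\alpha)}$ stay prime in $R_\alpha$ (applied along the chain) to see that these factors remain prime in every later N-subring stage, hence in $S$. This simultaneously gives the "moreover" claim that an element prime in some $R_\alpha$ is prime in $S$: if $\pi$ is prime in $R_\alpha$ and $\pi\mid bc$ in $S$, then $b,c$ and the witnessing quotient lie in a common later N-subring stage $R_{\alpha'}$ in which $\pi$ is still prime, so $\pi\mid b$ or $\pi\mid c$ there, hence in $S$. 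Uniqueness of factorization in $S$ follows because any two factorizations of a fixed element lie in a common $R_\alpha$.

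\textbf{Step 2 (intersection conditions).} For condition (\ref{item:N-subring-2}) and the claim $S\cap\fp=(0)$: if $Q\in\Ass(T)$ and $0\neq a\in S\cap Q$, then $a\in R_\beta$ for some $\beta$, and $R_\beta\cap Q=(0)$ (true for N-subring stages, and inherited at limit stages since a limit $R_\alpha=\bigcup_{\beta<\alpha}R_\beta$ intersected with $Q$ is the union of the $R_\beta\cap Q=(0)$), contradiction; the same argument with $\fp$ in place of $Q$, using $R_\alpha\cap\fp=(0)$ at N-subring stages, gives $S\cap\fp=(0)$. For condition (\ref{item:N-subring-3}): let $t\in T$ be regular, $J\in\Ass(T/tT)$, and suppose $\Ht(S\cap J)\geq 2$, so there is a chain $\fP_0\subsetneq\fP_1\subsetneq\fP_2$ in $S$ with $\fP_2\subseteq S\cap J$; choosing elements witnessing the strict inclusions and then an N-subring stage $R_\alpha$ containing all of them, one gets a chain of length $2$ in $R_\alpha$ contained in $R_\alpha\cap J$, contradicting $\Ht(R_\alpha\cap J)\leq 1$. (One must be mildly careful that the $\fP_i\cap R_\alpha$ are still distinct primes of $R_\alpha$; choosing the witnessing elements before choosing $\alpha$ arranges this.)

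\textbf{Step 3 (cardinality).} The inequality $|S|\leq\sup(|R_0|,|\Omega|)$ is a transfinite bookkeeping argument: by transfinite induction $|R_\alpha|\leq\sup(|R_0|,|\gamma(\alpha)|,\aleph_0)\leq\sup(|R_0|,|\Omega|)$ at N-subring stages (each such step adjoins finitely many elements and localizes, multiplying cardinality by at most $\aleph_0$, per Lemma~\ref{lemma:N-subring-transcendental} and Theorem~\ref{theorem:N-subring-c-in-IS}), and at limit stages $|R_\alpha|\leq\sum_{\beta<\alpha}|R_\beta|\leq|\alpha|\cdot\sup_{\beta<\alpha}|R_\beta|\leq\sup(|R_0|,|\Omega|)$; then $|S|=|\bigcup_{\beta\in\Omega}R_\beta|\leq|\Omega|\cdot\sup_\beta|R_\beta|\leq\sup(|R_0|,|\Omega|)$. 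This is exactly the content cited from \cite[Lemma 6]{Hei93} for the analogous $\fp$-subring statement, and I would simply invoke that proof.

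\textbf{Main obstacle.} There is no deep difficulty here; the only thing requiring care is Step 1, namely tracking the preservation of prime elements \emph{up an arbitrary well-ordered chain with limit stages}, since the hypothesis only gives preservation from $R_{\gamma(\alpha)}$ to $R_\alpha$ at successor-type stages. The resolution is that preservation of primality is "finitely checkable" — testing $\pi\mid bc\Rightarrow\pi\mid b$ or $\pi\mid c$ involves only finitely many elements — so it propagates through limit stages automatically, and a clean induction then carries it all the way to $S$. The statement even allows $S$ to fail the cardinality condition (\ref{item:N-subring-1}), so no subtlety is needed there beyond the stated inequality, which makes the whole lemma essentially a verification.
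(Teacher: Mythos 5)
Your proof is correct and takes essentially the same approach as the paper, whose proof is a one-line citation of \cite[Lemma 6]{Hei93} together with the observation that $S\cap\fp=(0)$ follows from the hypothesis at successor-type stages. You have spelled out the verification behind that citation, correctly handling the propagation of primality and the N-subring conditions through limit stages.
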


\begin{proof}
The result follows from \cite[Lemma 6]{Hei93} and our assumption that $R_{\alpha} \cap \fp = (0)$ whenever $\gamma(\alpha) < \alpha$.
\end{proof}

We are now ready to construct an N-subring $S$ of $T$ with many of our desired properties.

\begin{lemma}\label{lemma:intermediate-N-subring}
Let $(T,\fm)$ be a complete local ring with $\depth T \geq 2$ and let $\fp$ be a nonmaximal prime ideal of $T$. Let $(R,R \cap \fm)$ be an infinite N-subring of $T$ with $R \cap \fp = (0)$ and let $u \in T$.  Then there exists an N-subring $(S, S \cap \fm)$ of $T$ such that\begin{enumerate}
    \item $R \subseteq S \subseteq T$,
    \item $u + \fm^2$ is in the image of the map $S \longrightarrow T/\fm^2$,
    \item $|S| = |R|$,
    \item $S \cap \fp = (0)$,
    \item Prime elements of $R$ are prime in $S$, and
    \item For every finitely generated ideal $I$ of $S$, we have $IT \cap S = I$.
\end{enumerate}
\end{lemma}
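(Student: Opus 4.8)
The plan is to follow the blueprint of \cref{lemma:intermediate-p-subring} essentially verbatim, substituting for each ingredient its N-subring analogue: \cref{lemma:N-subring-adjoin-coset} in place of \cref{lemma:adjoin-coset}, \cref{theorem:N-subring-c-in-IS} in place of \cref{lemma:c-in-IS}, and \cref{lemma:N-subring-unioning} in place of \cref{lemma:unioning}. First I would apply \cref{lemma:N-subring-adjoin-coset} (legitimate since $\depth T \geq 2$) to produce an N-subring $R_1$ with $R \subseteq R_1 \subseteq T$, $|R_1| = |R|$, $R_1 \cap \fp = (0)$, with $u + \fm^2$ in the image of the map $R_1 \longrightarrow T/\fm^2$, and with prime elements of $R$ remaining prime in $R_1$.

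Next I would close up under the colon condition. Set $\Omega = \{(I,c) \mid I$ a finitely generated ideal of $R_1$ and $c \in IT \cap R_1\}$, which has cardinality $|R_1|$, and well-order $\Omega$ with a least element and no largest element. Then build an ascending chain $\{R_{\alpha}\}_{\alpha \in \Omega}$ of N-subrings of $T$: at a stage $\alpha$ with $\gamma(\alpha) < \alpha$, say $\gamma(\alpha) = (I,c)$, apply \cref{theorem:N-subring-c-in-IS} to obtain an N-subring $R_{\alpha} \supseteq R_{\gamma(\alpha)}$ with $c \in IR_{\alpha}$, $|R_{\alpha}| = |R_{\gamma(\alpha)}|$, $R_{\alpha} \cap \fp = (0)$, and prime elements of $R_{\gamma(\alpha)}$ prime in $R_{\alpha}$; at a limit stage take the union. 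By \cref{lemma:N-subring-unioning}, $R_2 := \bigcup_{\alpha \in \Omega} R_{\alpha}$ is — apart from the cardinality bound, which it meets since $|R_2| = |R_1| = |R|$ — an N-subring with $R_2 \cap \fp = (0)$ and with prime elements of $R_1$ prime in $R_2$, and by construction $IT \cap R_1 \subseteq IR_2$ for every finitely generated ideal $I$ of $R_1$. Repeating this with $R_2$ in place of $R_1$, and iterating, yields a chain $R_1 \subseteq R_2 \subseteq R_3 \subseteq \cdots$ of N-subrings, all of cardinality $|R|$, all meeting $\fp$ trivially, with primeness propagating along the chain, and with $IT \cap R_n \subseteq IR_{n+1}$ for every finitely generated ideal $I$ of $R_n$.

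Finally set $S = \bigcup_{n \geq 1} R_n$. One more application of \cref{lemma:N-subring-unioning} shows $S$ is an N-subring with $|S| = |R|$, $S \cap \fp = (0)$, and prime elements of $R$ (a fortiori of $R_1$) prime in $S$, so conditions (1)--(5) hold. For (6): given a finitely generated ideal $I = (a_1, \ldots, a_m)$ of $S$ and $c \in IT \cap S$, choose $N$ with $c, a_1, \ldots, a_m \in R_N$; then $c \in (a_1, \ldots, a_m)T \cap R_N \subseteq (a_1, \ldots, a_m)R_{N+1} \subseteq I$, so $IT \cap S = I$.

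The argument is essentially bookkeeping, and the only points demanding care are checking that the hypotheses of \cref{lemma:N-subring-unioning} hold at each stage — in particular that the extra N-subring data (triviality of the intersection with $\fp$ and preservation of primeness of prime elements) is carried along at every successor step, which is exactly what \cref{lemma:N-subring-adjoin-coset} and \cref{theorem:N-subring-c-in-IS} provide — and that each well-ordered index set satisfies the size restriction relative to $|T/\fm|$, which is automatic since every $\Omega$ appearing has cardinality $|R| \leq \sup(\aleph_0, |T/\fm|)$. There is no genuine obstacle beyond this; the $\depth T \geq 2$ hypothesis is needed solely to license the coset-adjunction step.
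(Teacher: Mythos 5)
Your proof is correct and follows the same blueprint as the paper's: apply \cref{lemma:N-subring-adjoin-coset} once, then iterate \cref{theorem:N-subring-c-in-IS} through a well-ordered $\Omega$ of ideal/element pairs, union via \cref{lemma:N-subring-unioning}, repeat $\omega$ times, and take the final union to verify $IT\cap S=I$. The only cosmetic difference is starting the chain at $R_1$ rather than $R_0$.
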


\begin{proof}
First use \cref{lemma:N-subring-adjoin-coset} to find an N-subring $(R_0, R_0 \cap \fm)$ of $T$ such that $R \subseteq R_0 \subseteq T$, $u + \fm^2$ is in the image of the map $R_0 \longrightarrow T/\fm^2$, $|R_0| = |R|$,  $R_0 \cap \fp = (0)$, and prime elements of $R$ are prime in $R_0$.  Let $$\Omega = \{(I,c) \, | \, I \mbox{ is a finitely generated ideal of } R_0 \mbox{ and } c \in IT \cap R_0\}.$$
Note that $|\Omega| = |R_0|$.  Well-order $\Omega$ so that $0$ is its least element, and so that it does not have a maximal element.  We inductively define an ascending collection $\{R_{\alpha}\}_{\alpha \in \Omega}$ of N-subrings of $T$.  We have already defined $R_0$.  Let $\alpha \in \Omega$ and assume that $R_{\beta}$ has been defined for all $\beta < \alpha$.  If $\gamma(\alpha) < \alpha$ and $\gamma(\alpha) = (I,c)$ then define $R_{\alpha}$ to be the N-subring of $T$ obtained from \cref{theorem:N-subring-c-in-IS} so that $R_{\gamma(\alpha)} \subseteq R_{\alpha} \subseteq T$, $c \in IR_{\alpha}$, $|R_{\alpha}| = |R_{\gamma(\alpha)}|$, $R_{\alpha} \cap \fp = (0)$, and prime elements of $R_{\gamma(\alpha)}$ are prime in $R_{\alpha}$. If, on the other hand, $\gamma(\alpha) = \alpha$, then define $R_{\alpha} = \bigcup_{\beta < \alpha}R_{\beta}$. By \cref{lemma:N-subring-unioning}, $R_1 = \bigcup_{\alpha \in \Omega}R_{\alpha}$ is an N-subring of $T$, $|R_1| = |R_0|$, $R_1 \cap \fp = (0)$, and prime elements of $R_0$ are prime in $R_1$. Now let $I$ be a finitely generated ideal of $R_0$ and let $c \in IT \cap R_0$.  Then $(I,c) = \gamma(\alpha)$ for some $\alpha \in \Omega$ with $\gamma(\alpha) < \alpha$. By construction, $c \in IR_{\alpha} \subseteq IR_1$.  It follows that $IT \cap R_0 \subseteq IR_1$ for every finitely generated ideal $I$ of $R_0$.

Repeat the above construction replacing $R_0$ with $R_1$ to find an N-subring $R_2$ of $T$ such that $R_1 \subseteq R_2 \subseteq T$, $|R_2| = |R_1|$, $R_2 \cap \fp = (0)$, prime elements of $R_1$ are prime in $R_2$, and $IT \cap R_1 \subseteq IR_2$ for every finitely generated ideal $I\subseteq R_1$.  Continue to obtain a chain of N-subrings $R_1 \subseteq R_2 \subseteq R_3 \subseteq \cdots$ of $T$ such that, for all $n$, $|R_n| = |R|$, $R_n \cap \fp = (0)$, prime elements of $R_n$ are prime in $R_{n + 1}$, and, for every finitely generated ideal $I\subseteq R_n$, we have $IT \cap R_n \subseteq R_{n + 1}$. Let $S = \bigcup_{i = 1}^{\infty}R_i$. By \cref{lemma:N-subring-unioning}, $S$ is an N-subring of $T$, $|S| = |R|$, $S \cap \fp = (0)$, and prime elements of $R$ are prime in $S$. Now suppose that $I\subseteq S$ is a finitely generated ideal, say $I = (a_1, \ldots ,a_m)$ for some $a_i \in S$. Let $c \in IT \cap S$. Then there is an $r$ such that $c \in R_r$ and $a_i \in R_r$ for all $i \in \{1,2, \ldots ,m\}$. Now, $c \in (a_1,a_2, \ldots,a_m)T \cap R_r \subseteq (a_1,a_2, \ldots,a_m)R_{r+1} \subseteq I$. Hence $IT \cap S = I$, and it follows that $S$ is the desired N-subring of $T$.
\end{proof}

We now have the tools we need to prove one of the main results of this section.

\begin{theorem}\label{theorem:p-subring-ufd-precompletion}
Let $(T,\fm)$ be a complete local equicharacteristic ring with $\depth T \geq 2$ and let $\fp$ be a nonmaximal prime ideal of $T$.  Let $y$ be a regular element of $T$ such that $y \in \fm \setminus \fp$.  Then there exists a local UFD $(A, A \cap \fm)$ such that $\wh{A} \cong T$, $A \cap \fp = (0)$, and $yt \in A$ for some unit $t$ of $T$.
\end{theorem}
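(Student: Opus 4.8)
The plan is to mirror the construction of \cref{theorem:p-subring-precompletion} from the previous section, but working throughout with N-subrings instead of $\fp$-subrings so that the final ring inherits the UFD property. First I would apply \cref{lemma:N-subring-base-ring} to obtain a countably infinite N-subring $(R_1, R_1 \cap \fm)$ of $T$ with $R_1 \cap \fp = (0)$ and $yt \in R_1$ for some unit $t \in T$; this is possible because $T$ is equicharacteristic with $\depth T \geq 2 \geq 1$ and $y \in \fm \setminus \fp$ is regular. This ring $R_1$ is a UFD (being a localization of a polynomial ring over a field by the proof of \cref{lemma:N-subring-base-ring}), and it contains the desired associate of $y$, but its completion need not be $T$.

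Next I would set $\Omega = T/\fm^2$, well-ordered so that every element has strictly fewer than $|\Omega|$ predecessors, with least element $1$. Recursively build an ascending chain $\{R_\alpha\}_{\alpha \in \Omega}$ of N-subrings: given $R_{\gamma(\alpha)}$ with $\gamma(\alpha) < \alpha$, apply \cref{lemma:intermediate-N-subring} (using $\depth T \geq 2$) to produce $R_\alpha \supseteq R_{\gamma(\alpha)}$ with $R_\alpha \cap \fp = (0)$, $|R_\alpha| = |R_{\gamma(\alpha)}|$, prime elements of $R_{\gamma(\alpha)}$ prime in $R_\alpha$, the coset $\gamma(\alpha)$ in the image of $R_\alpha \to T/\fm^2$, and $IT \cap R_\alpha = I$ for every finitely generated ideal $I$ of $R_\alpha$; at limit ordinals take unions. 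Set $A = \bigcup_{\alpha \in \Omega} R_\alpha$. By \cref{lemma:N-subring-unioning}, $A$ satisfies all the N-subring conditions except possibly the cardinality bound, and $A \cap \fp = (0)$; in particular $A \cap Q = (0)$ for every $Q \in \Ass(T)$, so $A$ is a domain. By construction $A \to T/\fm^2$ is onto, and the usual cofinality argument (choose $\alpha$ with $\gamma(\alpha) < \alpha$ capturing finitely many generators and the element $c \in IT \cap A$) shows $IT \cap A = I$ for every finitely generated ideal $I$ of $A$. Then \cref{proposition:heitmann-completion} gives $\wh{A} \cong T$, which is Noetherian, so $A$ is local.

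The remaining point — and the step I expect to be the main obstacle — is concluding that $A$ is a UFD rather than merely a Noetherian normal domain. The N-subring axioms only guarantee $\Ht(A \cap J) \leq 1$ for $J \in \Ass(T/tT)$ with $t$ regular; one must upgrade this to genuine unique factorization. Here I would invoke a Nagata-type criterion: $A$ is a UFD if every height-one prime is principal, and this can be extracted from the careful tracking of prime elements through the chain. Specifically, at each stage \cref{theorem:N-subring-c-in-IS} and \cref{lemma:N-subring-adjoin-coset} ensure prime elements of $R_{\gamma(\alpha)}$ stay prime in $R_\alpha$, and the base ring $R_1$ is a UFD; combining this with the height-one condition on $A \cap J$ and the faithful flatness of $A \to \wh{A} = T$ (so that a height-one prime of $A$ contracts from a suitable $J \in \Ass(T/tT)$), one shows every height-one prime of $A$ is principal. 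I would cite \cite{Hei93,Loe97} for the precise packaging of this argument, since the machinery there is designed exactly to produce UFDs as precompletions, and the only genuinely new ingredient over those sources is the simultaneous control of $\fp$ and of the associate $yt$, which is already built into the lemmas above.
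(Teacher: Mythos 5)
Your proposal follows the paper's proof almost exactly, and every step is sound, but you misread \cref{lemma:N-subring-unioning} and so manufacture a nonexistent obstacle at the end. Recall that the definition of an N-subring \emph{begins} with ``let $(R,R\cap\fm)$ be a quasi-local unique factorization domain contained in $T$'' and only then lists the three numbered conditions --- so being a UFD is part of the definition, not something to be derived from condition (\ref{item:N-subring-3}). When \cref{lemma:N-subring-unioning} says that $S=\bigcup_\beta R_\beta$ ``satisfies the conditions to be an N-subring of $T$ except for possibly the cardinality condition,'' it is asserting in particular that $S$ is a UFD; this is the content of \cite[Lemma 6]{Hei93}, which uses exactly the mechanism you try to rebuild by hand (prime elements of $R_{\gamma(\alpha)}$ stay prime in $R_\alpha$, the base ring is a UFD, passage to the limit). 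The paper's proof therefore ends simply by reading off ``$A$ is a UFD'' from \cref{lemma:N-subring-unioning}, then invoking \cref{proposition:heitmann-completion} for $\wh{A}\cong T$; your paragraph about a ``Nagata-type criterion,'' faithful flatness of $A\to\wh A$, and contraction of height-one primes from $\Ass(T/tT)$ is not wrong in spirit, but it duplicates what \cite[Lemma 6]{Hei93} already proves and should simply be deleted. With that correction your argument coincides with the paper's.
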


\begin{proof}
Let $\Omega = T/\fm^2$, and well-order $\Omega$ so that every element of $\Omega$ has fewer than $|\Omega$| predecessors.  Let $0$ denote the least element of $\Omega$. Let $(R_0, R_0 \cap \fm)$ be the countably infinite N-subring of $T$ given by \cref{lemma:N-subring-base-ring} so that $R_0 \cap \fp = (0)$ and $yt \in R_0$ for some unit $t \in T$. We now recursively define an ascending collection $\{R_{\alpha}\}_{\alpha \in \Omega}$ of infinite quasi-local subrings of $T$. Note that $R_0$ has already been defined. 
 Let $\alpha \in \Omega$ and assume that $R_{\beta}$ has been defined for every $\beta < \alpha$.  If $\gamma(\alpha) < \alpha$, define $R_{\alpha}$ to be the N-subring obtained from \cref{lemma:intermediate-N-subring} so that $R_{\gamma(\alpha)} \subseteq R_{\alpha} \subseteq T$, $\gamma(\alpha)$ is in the image of the map $R_{\alpha} \longrightarrow T/\fm^2$, $|R_{\alpha}| = |R_{\gamma(\alpha)}|$, $R_{\alpha} \cap \fp = (0)$, prime elements of $R_{\gamma(\alpha)}$ are prime in $R_{\alpha}$, and $IT \cap R_{\alpha} = I$ for every finitely generated ideal $I\subseteq R_{\alpha}$.  If $\gamma(\alpha) = \alpha$, define $R_{\alpha} = \bigcup_{\beta < \alpha}R_{\beta}$.  Now define $A = \bigcup_{\alpha \in \Omega}R_{\alpha}$.  By \cref{lemma:N-subring-unioning}, $A$ is a UFD and $A \cap \fp = (0)$. By construction, the map $A \longrightarrow T/\fm^2$ is onto. Let $I\subseteq A$ be a finitely generated ideal and let $c \in IT \cap A$. Write $I = (a_1, a_2, \ldots,a_m)$ for $a_i \in A$ and choose $\alpha$ so that $\gamma(\alpha) < \alpha$ and $c, a_1, a_2, \ldots ,a_m \in R_{\gamma(\alpha)}$. Now, $c \in (a_1,a_2, \ldots ,a_m)T \cap R_{\alpha} = (a_1, a_2, \ldots,a_m)R_{\alpha} \subseteq I$.  It follows that $IT \cap A = I$ for all finitely generated ideals $I$ of $A$. By \cref{proposition:heitmann-completion}, we have $\wh{A} \cong T$.
\end{proof}

The following should be compared with \cite[Theorem 23]{LR01}.
\begin{theorem}\label{theorem:f-regular-precompletion-ufd}
    Let $(T,\fm)$ be a complete local reduced Gorenstein ring with $\dim T\geq 2$ of prime characteristic $p>0$. Let $I\subseteq T$ be a parameter ideal and let $y\in T$ generate the socle of $T/I$. Suppose that $y^q\not\in I^{[q]}$ for all $q=p^e\gg 0$, and further assume that there exists a nonmaximal $\fp\in\Spec^\circ T$ such that $M_{I,y}\subseteq\fp$. Then there exists an $F$-regular local UFD $(A,\fm\cap A)$ such that $\wh{A}\cong T$.
\end{theorem}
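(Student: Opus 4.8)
The plan is to combine the UFD precompletion machinery of this section (\cref{theorem:p-subring-ufd-precompletion}) with the tight-closure argument from the proof of \cref{theorem:f-regular-precompletion-non-gor}, using the Gorenstein hypothesis to collapse weak $F$-regularity to $F$-regularity via \cite[Corollary 4.7(a)]{HH94}. First I would observe that since $T$ is Gorenstein, it is approximately Gorenstein with approximately Gorenstein sequence $\{I^{[q]}\}_{q=p^e}$ (or more simply $\{I^{\,n}\}$ after checking socle dimension one; in any case a cofinal family of parameter powers of $I$ works since $T/I$ is zero-dimensional Gorenstein), and the single socle generator $y$ of $T/I$ produces socle generators $y^{q-1}$-type elements for the deeper ideals in the standard way. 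Next, exactly as in the first paragraph of the proof of \cref{theorem:f-regular-precompletion-non-gor}, I would use prime avoidance to replace $y$ by an associate $y+z$ with $z\in I$ so that the new $y$ is a regular element of $T$ lying in $\fm\setminus\fp$, noting that this substitution changes neither $M_{I,y}$ nor the condition $y^q\notin I^{[q]}$, and similarly adjust the socle generators of the deeper members of the sequence so that each is regular and avoids $\fp$.

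Then I would invoke \cref{theorem:p-subring-ufd-precompletion} with this $\fp$ and this (regular, $\fp$-avoiding) $y$ to produce a local UFD $(A,\fm\cap A)$ with $\wh A\cong T$, $A\cap\fp=(0)$, and $yt\in A$ for some unit $t$ of $T$. Here I should note that \cref{theorem:p-subring-ufd-precompletion} as stated only guarantees one associate $yt\in A$, whereas the weak $F$-regularity argument needs an associate of the socle generator of every member $J_s$ of the approximately Gorenstein sequence of $A$. I would therefore either note that the powers/Frobenius-powers of the single element $y$ already furnish these socle generators once $yt\in A$ (since the socle of $T/I^{[q]}$ is generated by $y^{?}$ — more precisely, for a Gorenstein zero-dimensional quotient the socle generators of the nested family are explicit polynomial expressions in $y$ and the parameters, all of which lie in $A$ up to the unit $t$ and its powers), or strengthen \cref{theorem:p-subring-ufd-precompletion} to accept countably many regular $\fp$-avoiding elements exactly as \cref{theorem:f-regular-precompletion-non-gor} does for $\fp$-subrings — the N-subring lemmas \cref{lemma:N-subring-base-ring,lemma:intermediate-N-subring,lemma:N-subring-unioning} support this with no essential change. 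With the associates in hand, the tight-closure computation is verbatim that of \cref{theorem:f-regular-precompletion-non-gor}: letting $\{J_s\}$ be the approximately Gorenstein sequence of $A$ with $\wh{J_s}=I_s$, if the socle generator $\overline{y_st_s}$ of $A/J_s$ were in $J_s^*$ then some $0\neq c\in A$ would satisfy $c(y_st_s)^q\in J_s^{[q]}$ for all $q$, forcing $c\in M_{I_s,y_s}\cap A\subseteq\fp\cap A=(0)$, a contradiction; hence $J_s=J_s^*$ for all $s$ and \cref{lemma:tc-approx-gor} gives that $A$ is weakly $F$-regular. Since $A$ is Gorenstein (being a local ring whose completion $T$ is Gorenstein) and weakly $F$-regular, it is $F$-regular by \cite[Corollary 4.7(a)]{HH94}, and it is a UFD by \cref{theorem:p-subring-ufd-precompletion}, completing the proof.

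The main obstacle I anticipate is the bookkeeping around socle generators of the whole approximately Gorenstein sequence versus the single element $y$ provided by the hypotheses and by \cref{theorem:p-subring-ufd-precompletion}. In the non-Gorenstein section this was handled by feeding the entire countable family $\{y_i\}$ into \cref{theorem:p-subring-precompletion}; here one must either give a clean proof that, for a zero-dimensional Gorenstein quotient $T/I$, a cofinal approximately Gorenstein sequence can be built whose socle generators are all $T$-associates of powers of the fixed $y$ (so that a single application of \cref{theorem:p-subring-ufd-precompletion} suffices, since $(yt)^k=y^kt^k$ and $t^k$ is a unit), or else record the routine upgrade of \cref{theorem:p-subring-ufd-precompletion} to a countable list of regular $\fp$-avoiding inputs. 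Either route is straightforward but is the one place where the Gorenstein hypothesis genuinely does work for us — it is exactly what lets a single socle element control the whole sequence and what lets us conclude $F$-regularity rather than merely weak $F$-regularity. A secondary minor point to verify is that the hypothesis ``$y^q\notin I^{[q]}$ for all $q\gg 0$'' rather than ``for all $q$'' is harmless: since $I=I^{[1]}$ and tight closure is detected cofinally in $q$, and since (as in the non-Gorenstein case) one may pass to the $F$-pure situation where Frobenius closures are trivial, the large-$q$ version suffices for the contradiction above.
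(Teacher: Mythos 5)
Your high-level plan matches the paper's: apply \cref{theorem:p-subring-ufd-precompletion} (after prime avoidance to make $y$ regular and $\fp$-avoiding), run the tight-closure argument of \cref{theorem:f-regular-precompletion-non-gor} to get weak $F$-regularity, and invoke \cite[Corollary 4.7(a)]{HH94} to upgrade to $F$-regularity since $A$ is Gorenstein. The paper's proof is a one-line \emph{mutatis mutandis} reference to \cite[Theorem 23]{LR01}, so you are forced to supply details they outsource. You correctly identify the crux — turning the single hypothesized socle element $y$ into control over the whole approximately Gorenstein sequence — but both of your proposed resolutions have gaps.

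Route (a) rests on a false claim. Writing $I=(x_1,\dots,x_d)$, the socle of $T/I^{[q]}$ is generated (up to a unit) by $(x_1\cdots x_d)^{q-1}y$, \emph{not} by a power of $y$; this is the standard Gorenstein/CM fact that $(x_1^q,\dots,x_d^q):(x_1,\dots,x_d)=(x_1^q,\dots,x_d^q)+\bigl((x_1\cdots x_d)^{q-1}\bigr)$. So $(yt)^k$ being in $A$ gives you nothing about these socle generators unless the parameters $x_i$ (or $T$-associates of them) also lie in $A$, which \cref{theorem:p-subring-ufd-precompletion} does not provide. Your parenthetical suggestion $\{I^n\}$ is also not an approximately Gorenstein sequence: $T/I^n$ generically has socle dimension $>1$.

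Route (b) is the viable one, but as written it is incomplete. After feeding the family $y_q:=(x_1\cdots x_d)^{q-1}y$ into a countable-input version of \cref{theorem:p-subring-ufd-precompletion}, the tight-closure computation you describe ends with ``$c\in M_{I_s,y_s}\cap A\subseteq\fp\cap A=(0)$.'' But the hypothesis only gives $M_{I,y}\subseteq\fp$, not $M_{I^{[q]},y_q}\subseteq\fp$ for each $q$. The missing step is the regular-sequence colon identity — valid because $T$ is Gorenstein, hence Cohen--Macaulay, so $x_1,\dots,x_d$ is a regular sequence — which yields
\begin{equation*}
\bigl(I^{[qp^f]}:(x_1\cdots x_d)^{(q-1)p^f}y^{p^f}\bigr)=\bigl(I^{[p^f]}:y^{p^f}\bigr)
\end{equation*}
and hence $M_{I^{[q]},y_q}=M_{I,y}\subseteq\fp$, and similarly $y_q^{p^e}\notin (I^{[q]})^{[p^e]}$ for all $e$ once $y^{p^e}\notin I^{[p^e]}$ for $e\gg 0$. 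This colon manipulation is precisely what the Gorenstein (CM) hypothesis buys: it lets the single pair $(I,y)$ control the whole sequence. Once you record it, the argument closes; without it, the ``verbatim'' claim does not hold. (As a side remark, the same colon identity also shows one can avoid the countable-input upgrade entirely: any socle generator $w_q\in A$ of $A/(I^{[q]}\cap A)$ satisfies $w_q=u(x_1\cdots x_d)^{q-1}y+v$ in $T$ with $u$ a unit and $v\in I^{[q]}$, and the same computation forces any putative test element $c$ into $M_{I,y}\cap A\subseteq\fp\cap A=(0)$ — but either way the colon identity is the ingredient you cannot do without.)
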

\begin{proof}
    By prime avoidance as in the proof of \cref{theorem:f-regular-precompletion-non-gor} we may assume that $y$ is a regular element of $T$ avoiding $\fp$. We obtain the stated result \emph{mutatis mutandis} as in the proofs of \cite[Theorem 23]{LR01} and \cref{theorem:f-regular-precompletion-non-gor}, using \cref{theorem:p-subring-ufd-precompletion} in place of \cite[Theorem 22]{LR01} (resp. in place of \cref{theorem:p-subring-precompletion}).
\end{proof}
An identical proof strategy of \cref{corollary:f-regular-power-series-precompletion-non-gor} yields the following:
\begin{corollary}\label{corollary:f-regular-power-series-precompletion-ufd}
    Let $(T,\fm)$ be a complete local reduced Gorenstein ring of prime characteristic $p>0$ and $\dim T\geq 1$. Let $I\subseteq T$ be a parameter ideal and let $y\in T$ generate the socle of $T/I$. Suppose that $y^{p^e}\not\in I^{[p^e]}$ for all $e$ (for example, we may take $T$ to be $F$-pure). Then there exists an $F$-regular local UFD $A$ such that $\wh{A}\cong T\llbracket X\rrbracket$.
\end{corollary}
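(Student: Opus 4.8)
The plan is to apply \cref{theorem:f-regular-precompletion-ufd} to the ring $T\llbracket X\rrbracket$, running the argument of \cref{corollary:f-regular-power-series-precompletion-non-gor} essentially verbatim; the point is that the Gorenstein hypothesis lets us work with a single parameter ideal in place of an entire approximately Gorenstein sequence. First I would record that $T\llbracket X\rrbracket$ satisfies the standing hypotheses of \cref{theorem:f-regular-precompletion-ufd}: it is complete local of prime characteristic $p$; it is Gorenstein because $T$ is; it is reduced because $T$ is a reduced Noetherian ring, so that $T\llbracket X\rrbracket$ embeds coefficientwise into the finite product $\prod_{\fq\in\min(T)}(T/\fq)\llbracket X\rrbracket$ of power series rings over domains; and $\dim T\llbracket X\rrbracket=\dim T+1\geq 2$.

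Next I would exhibit the triple $(L,z,\fp)$ to which the theorem is to be applied. Put $L:=(I,X)\subseteq T\llbracket X\rrbracket$; since $I$ is generated by a system of parameters of $T$ and $X$ is a nonzerodivisor of $T\llbracket X\rrbracket$, the ideal $L$ is a parameter ideal, and the isomorphism $T\llbracket X\rrbracket/L\cong T/I$ (killing $X$) shows that $z:=y+X$ generates the socle of $T\llbracket X\rrbracket/L$. For the first hypothesis of \cref{theorem:f-regular-precompletion-ufd}, observe that in characteristic $p$ we have $z^{p^e}=y^{p^e}+X^{p^e}$ and $L^{[p^e]}=(I^{[p^e]},X^{p^e})$, so $z^{p^e}\in L^{[p^e]}$ would force $y^{p^e}\in(I^{[p^e]},X^{p^e})\cap T$; but $(I^{[p^e]},X^{p^e})\cap T=I^{[p^e]}$, because $T\llbracket X\rrbracket/(I^{[p^e]},X^{p^e})\cong(T/I^{[p^e]})[X]/(X^{p^e})$ is free as a $T/I^{[p^e]}$-module, with $T/I^{[p^e]}$ sitting inside as the constants. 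This contradicts the hypothesis $y^{p^e}\notin I^{[p^e]}$, so $z^{p^e}\notin L^{[p^e]}$ for every $e$. Finally, let $\fp:=\fm T\llbracket X\rrbracket$; this is a nonmaximal prime of $T\llbracket X\rrbracket$ — it is properly contained in the maximal ideal $(\fm,X)$ and the quotient $(T/\fm)\llbracket X\rrbracket$ is a domain — and $z\notin\fp$ since the coefficient of $X$ in $z$ is a unit.

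It remains to verify the second hypothesis, that $M_{L,z}\subseteq\fp$. Here I would apply \cref{proposition:multiplier} to the reduced local ring $T$ with the variable $X$, obtaining the dichotomy $M_{L,z}=T\llbracket X\rrbracket$ or $M_{L,z}\subseteq\fm T\llbracket X\rrbracket=\fp$; and the first alternative is impossible, since $1\in M_{L,z}$ would say $z^{p^f}\in L^{[p^f]}$ for all $f\gg 0$, contradicting the previous paragraph. (Equivalently, one can compute $M_{L,z}=M_{IT\llbracket X\rrbracket,y}\neq T\llbracket X\rrbracket$ by the same flat base change identities displayed in the proof of \cref{corollary:f-regular-power-series-precompletion-non-gor}.) With $(L,z,\fp)$ in hand, \cref{theorem:f-regular-precompletion-ufd} applied to $T\llbracket X\rrbracket$ yields an $F$-regular local UFD $A$ with $\wh A\cong T\llbracket X\rrbracket$, as required. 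The only place calling for real care — the main obstacle, such as it is — is the bookkeeping around the contraction identity $(I^{[q]},X^q)\cap T=I^{[q]}$ together with the observation that taking $L=(I,X)$ with a single copy of $X$ is legitimate here (in contrast to \cref{corollary:f-regular-power-series-precompletion-non-gor}, where one is forced to use the sequence $\{(I_s,X^s)\}$ in order to retain the approximately Gorenstein property); the rest is a direct transcription of the earlier corollary.
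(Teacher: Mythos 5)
Your argument is correct and is the natural fleshing-out of the paper's terse remark that the proof is identical in strategy to \cref{corollary:f-regular-power-series-precompletion-non-gor}: you pass to $T\llbracket X\rrbracket$ with the parameter ideal $L=(I,X)$, socle generator $z=y+X$, and prime $\fp=\fm T\llbracket X\rrbracket$, verify $z^{p^e}\notin L^{[p^e]}$ via the contraction $(I^{[q]},X^q)\cap T=I^{[q]}$, invoke \cref{proposition:multiplier} to rule out $M_{L,z}=T\llbracket X\rrbracket$, and then apply \cref{theorem:f-regular-precompletion-ufd}. The only cosmetic difference from the template is that a single $X$ suffices here in place of the powers $X^s$ needed in the approximately Gorenstein sequence, a point you correctly flag.
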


\subsection{A demonstration of \texorpdfstring{\cref{maintheorem:precompletion-2}}{Theorem D}}\label{section:fermat}
In this subsection we provide a particular example exhibiting the statements of \cref{theorem:f-regular-precompletion-ufd} and \cref{corollary:f-regular-power-series-precompletion-ufd} which employs the construction of \cite{HRW97}. This is an extension of the Fermat cubic case considered in \cite[Example 2]{LR01}. Our presentation here is more general and has a slightly simpler proof, while also adapting to give UFD examples in sufficiently high dimensions for parafactoriality reasons (see \cref{corollary:fermat-quintic}).
\begin{setting}\label{setting:fermat}
    Let $\sk$ be a perfect field of prime characteristic $p>0$, and let $$R=\sk[x,t,y_2,\ldots,y_d]_{(x,t,y_2\ldots, y_d)}$$ be the localized polynomial ring in $d+1$ indeterminates over $\sk$. For each $1\leq i\leq d$, fix the power series
    \begin{align*}
        \tau_i:=\sum\limits_{n=1}^\infty a_{i,n} x^n\in x \sk\llbracket x\rrbracket
    \end{align*}
    which we choose to be algebraically independent over $\sk (x)$. Define
    \begin{align*}
        \rho:=(t+\tau_1)^d - \sum\limits_{i=2}^d (y_i+\tau_i)^d \in \wh{R}=\sk \llbracket x,t,y_2,\ldots, y_d\rrbracket.
    \end{align*}
    Let $B:=\Frac(R)(\rho)\cap \wh{R}$ and let $A:=B/(\rho)$. Define the \emph{$n$th endpiece of $\rho$} to be 
    \begin{align*}
        \rho_n:=\frac{1}{x^n}\left(\rho-\left(\left(t+\sum\limits_{j=0}^n a_{1,j} x^j\right)^d - \sum\limits_{\ell=2}^d \left(y_\ell +\sum\limits_{j=1}^n a_{\ell,j}x^j\right)^d\right)\right)
    \end{align*}
    and observe for each $n$ that $\rho_n=x\rho_{n+1}+r_n$ for some $r_n\in R$.  Finally, define $B_n:=R[\rho_n]_{(x,t,y_2,\ldots, y_d,\rho_n)}$.
\end{setting}

\begin{proposition}
    Assume the notation of \cref{setting:fermat}. Then $B$ (and hence $A$) is a Noetherian local ring. Moreover, $B$ may be realized as the colimit $B=\lim\limits_{\longrightarrow_n} B_n$, and $\wh{B}=\wh{R}$.
\end{proposition}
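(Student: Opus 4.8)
The plan is to identify $B$ with the directed union $B^{*}:=\bigcup_{n}B_{n}$ and then to run the approximation machinery for intersection domains of \cite{HRW97} (cf.\ the Fermat cubic prototype in \cite[Example 2]{LR01}), reducing everything to one flatness input.

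First I would record the bookkeeping relating $\rho$ to its endpieces. Setting $g_{n}:=\rho-x^{n}\rho_{n}$, the definition of $\rho_{n}$ gives $g_{n}\in R$, so $\rho=x^{n}\rho_{n}+g_{n}\in R[\rho_{n}]$ for every $n$, while $\rho_{n}=x^{-n}(\rho-g_{n})\in\Frac(R)(\rho)$; hence $\Frac(R)(\rho)=\Frac(R)(\rho_{n})$ for all $n$, and $\rho_{n}\in\wh{R}$ (the endpieces are genuine power series, since $\rho-g_{n}\in x^{n}\wh{R}$). Because the $\tau_{i}$ were chosen algebraically independent over $\sk(x)$, $\rho$ — and therefore each $\rho_{n}$ — is transcendental over $\Frac(R)$, so $R[\rho_{n}]$ is a polynomial ring and $B_{n}$ is its localization at the maximal ideal $(x,t,y_{2},\ldots,y_{d},\rho_{n})$; in particular each $B_{n}$ is a Noetherian local domain contained in $\wh{R}$ with $\Frac(B_{n})=\Frac(R)(\rho)$. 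The relation $\rho_{n}=x\rho_{n+1}+r_{n}$ with $r_{n}\in R$ shows $(x,t,y_{2},\ldots,y_{d},\rho_{n})\subseteq(x,t,y_{2},\ldots,y_{d},\rho_{n+1})$, so $B_{n}\hookrightarrow B_{n+1}$ is a local inclusion; therefore $B^{*}$ is a quasi-local domain with $\Frac(B^{*})=\Frac(R)(\rho)$, and $B^{*}\subseteq\Frac(R)(\rho)\cap\wh{R}=B$.

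Next I would extract the $x$-adic picture, which will determine the completion. Iterating $\rho_{n}=x\rho_{n+1}+r_{n}$ gives $\rho_{n}\in R+x^{m}B^{*}$ for every $m$, so $R/x^{m}R\to B^{*}/x^{m}B^{*}$ is surjective; it is injective because $x^{m}B^{*}\cap R\subseteq x^{m}\wh{R}\cap R=x^{m}R$ by faithful flatness of $R\to\wh{R}$. Hence $B^{*}/x^{m}B^{*}\cong R/x^{m}R$ for all $m$, and $x$ is a nonzerodivisor on $B^{*}$. Once $B^{*}$ is known to be Noetherian, it follows that $\wh{B^{*}}/x^{m}\wh{B^{*}}\cong\wh{R}/x^{m}\wh{R}$ for all $m$ (both are the completion of $B^{*}/x^{m}B^{*}\cong R/x^{m}R$), and since a Noetherian complete local ring is $x$-adically complete and $\wh{R}$ is its own $x$-adic completion, this yields $\wh{B^{*}}\cong\varprojlim_{m}\wh{R}/x^{m}\wh{R}\cong\wh{R}$; then $A=B/(\rho)$ is Noetherian local as a proper quotient of one.

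The one substantive step — and the main obstacle — is to prove that $B^{*}$ is Noetherian and that $B^{*}=B$. Both are instances of the limit-intersecting phenomenon of \cite{HRW97}: the content is that each transition map $B_{n}\hookrightarrow B_{n+1}$ is flat — informally, passing from $B_{n}$ to $B_{n+1}$ adjoins only a formal power series variable and imposes no spurious algebraic relation — from which \cite{HRW97} deduces simultaneously that $B^{*}$ is Noetherian and that $B^{*}=\Frac(B^{*})\cap\wh{R}=B$, thereby identifying $B$ with the colimit $\bigcup_{n}B_{n}$. Verifying this flatness is precisely where the hypothesis is used: one must check, via a generic formal fibre analysis for $R$, that $\rho$ is limit-intersecting over $R$ with respect to $(x)$, and this comes down to the algebraic independence of $\tau_{1},\ldots,\tau_{d}$ over $\sk(x)$. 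I would either quote this from \cite{HRW97} or transcribe the argument of \cite[Example 2]{LR01} into the present, mildly more general, notation; granting it, the proposition follows, and $A=B/(\rho)$ is Noetherian local with $\wh{A}\cong\wh{R}/(\rho)\wh{R}$.
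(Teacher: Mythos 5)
Your structural framework is sound and matches the general intersection-ring machinery: the bookkeeping with endpieces, the identification $\Frac(R)(\rho)=\Frac(R)(\rho_n)$, the observation that $B^*:=\bigcup_n B_n$ is a quasi-local domain inside $B$, and the computation $B^*/x^mB^*\cong R/x^mR$ are all correct and are the right first moves. However, you explicitly defer the one substantive step — the flatness verification — writing ``I would either quote this from \cite{HRW97} or transcribe the argument of \cite[Example 2]{LR01}.'' That step \emph{is} the proof, and the paper does carry it out. There the hypothesis of \cite[Theorem 3.2]{HRW97} that is verified is flatness of the embedding $\varphi:R[\rho]\hookrightarrow R[\tau_1,\ldots,\tau_d]$; this is reduced to flatness of the $R$-algebra map $R[s]\to R[s,w_1,\ldots,w_d]/\bigl(s-((t+w_1)^d-\sum_{i\geq 2}(y_i+w_i)^d)\bigr)$, which holds because the coefficients of that hypersurface, as a polynomial in $w_1,\ldots,w_d$, generate the unit ideal (e.g.\ the coefficient of $w_1^d$ is a unit), so \cite[Theorem 22.6]{Mat86} applies.

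Two finer remarks. First, your gloss that the flatness ``comes down to the algebraic independence of $\tau_1,\ldots,\tau_d$ over $\sk(x)$'' is imprecise: algebraic independence guarantees that $R[\rho]$ and $R[\tau_1,\ldots,\tau_d]$ are honest polynomial rings over $R$ (so the objects in the flatness statement are what you think they are), but the flatness of $\varphi$ is a genuinely additional content condition, not a formal consequence of transcendence. Second, you phrase the HRW97 criterion in terms of flatness of the transition maps $B_n\hookrightarrow B_{n+1}$; the version invoked here is a flatness condition on $R[\rho]\hookrightarrow R[\tau_1,\ldots,\tau_d]$. Whether or not these formulations are equivalent in this setting, you would still have to actually verify one of them, and as written that verification is missing.
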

\begin{proof}
The embedding
    \begin{align*}
        \varphi:R[\rho]\hookrightarrow R[\tau_1,\ldots, \tau_d]
    \end{align*}
is flat if and only if so too is the $R$-algebra extension
\begin{align}\label{eq:fermat-1}
    \varphi':R[s]\rightarrow R[w_1,\ldots, w_d]\cong\frac{R[s,w_1,\ldots, w_d]}{\left(s-((t+w_1)^d - \sum\limits_{i=2}^d (y_i+w_i)^d\right)}.
\end{align}
The coefficients of the hypersurface in (\ref{eq:fermat-1}) (viewed as a polynomial in $R[w_1,\ldots, w_d]$) generate the unit ideal. $\varphi'$ is then flat by \cite[Theorem 22.6]{Mat86}, hence so is $\varphi$. The conclusions of the Proposition now follow from \cite[Theorem 3.2]{HRW97}.
\end{proof}

\begin{proposition}
    Assume the notation of \cref{setting:fermat}. Then $\wh{A}$ is a Gorenstein local ring which is not $F$-regular.
\end{proposition}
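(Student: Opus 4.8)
The plan is to compute $\wh{A}$ explicitly, to recognize it---after a change of coordinates that absorbs the power series $\tau_i$---as a Fermat-type hypersurface, and then to deduce the failure of $F$-regularity from the fact that the corresponding Fermat cone is not $F$-rational. By the preceding proposition, $\wh{B}=\wh{R}=\sk\llbracket x,t,y_2,\ldots,y_d\rrbracket$. The $\sk$-algebra endomorphism $\Phi$ of $\sk\llbracket x,t,y_2,\ldots,y_d\rrbracket$ fixing $x$ and sending $t\mapsto t-\tau_1$ and $y_i\mapsto y_i-\tau_i$ for $2\le i\le d$ is an automorphism, since each $\tau_j\in x\sk\llbracket x\rrbracket\subseteq\sk\llbracket x\rrbracket$ and the inverse of $\Phi$ re-adds the $\tau_j$; moreover $\Phi(\rho)=t^d-\sum_{i=2}^d y_i^d$ by construction, so in particular $\rho\ne 0$ and $\rho$ is a nonzerodivisor on the domain $\wh{R}$. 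Since completion commutes with the quotient $A=B/\rho B$,
\[
\wh{A}\;\cong\;\wh{B}/\rho\wh{B}\;=\;\wh{R}/\rho\wh{R}\;\cong\;\sk\llbracket x,t,y_2,\ldots,y_d\rrbracket/\bigl(t^d-y_2^d-\cdots-y_d^d\bigr)\;=\;C\llbracket x\rrbracket ,
\]
where $C:=\sk\llbracket t,y_2,\ldots,y_d\rrbracket/\bigl(t^d-\textstyle\sum_{i=2}^d y_i^d\bigr)$. Being a hypersurface ring, $\wh{A}$ is a complete intersection, hence Gorenstein; this is the first assertion. For the second it suffices to prove that $\wh{A}$ is \emph{not} weakly $F$-regular, since $F$-regularity implies weak $F$-regularity.

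Next I would show that $C$ is not weakly $F$-regular. If $p\mid d$, write $d=p^a m$ with $p\nmid m$; then $t^d-\sum_{i=2}^d y_i^d=\bigl(t^m-\sum_{i=2}^d y_i^m\bigr)^{p^a}$ is a proper power of a nonunit in the regular ring $\sk\llbracket t,y_2,\ldots,y_d\rrbracket$, so $C$ is non-reduced and hence not weakly $F$-regular. If $p\nmid d$, then $C$ is reduced, and $C$ is the completion, at its maximal ideal, of the localization at the homogeneous maximal ideal of the standard-graded hypersurface ring $C_0:=\sk[t,y_2,\ldots,y_d]/(t^d-\sum_{i=2}^d y_i^d)$; the ring $C_0$ is Cohen--Macaulay of dimension $d-1\ge 1$ with $a$-invariant $a(C_0)=d-d=0$, and a positive-dimensional graded ring with nonnegative $a$-invariant is not $F$-rational (see, e.g., \cite{HH94}). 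Since $F$-rationality localizes and is preserved between an excellent local ring and its completion, $C$ is not $F$-rational, and as $C$ is Cohen--Macaulay this forces $C$ not to be weakly $F$-regular.

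Finally I would descend from $C\llbracket x\rrbracket$ to $C$. Suppose for contradiction that $\wh{A}=C\llbracket x\rrbracket$ were weakly $F$-regular. Let $J\subseteq C$ be an ideal and $z\in J^{*}$, witnessed by an element $c$ of $C$ avoiding the minimal primes of $C$ with $cz^{p^e}\in J^{[p^e]}$ for all $e\gg 0$. The minimal primes of $C\llbracket x\rrbracket$ are the contractions of extensions of minimal primes of $C$, so $c$ also avoids the minimal primes of $C\llbracket x\rrbracket$; together with $J^{[p^e]}C\llbracket x\rrbracket=(JC\llbracket x\rrbracket)^{[p^e]}$ this yields $z\in(JC\llbracket x\rrbracket)^{*}$, whence $z\in JC\llbracket x\rrbracket\cap C=J$ by weak $F$-regularity and faithful flatness of $C\to C\llbracket x\rrbracket$. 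Thus $C$ would be weakly $F$-regular, contradicting the previous paragraph; therefore $\wh{A}$ is not weakly $F$-regular, and in particular not $F$-regular.

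The main obstacle is the input to the middle step: that the degree-$d$ Fermat cone in $d$ variables is not $F$-rational. This is the conceptual heart of the matter---it reflects that the projective cone over a degree-$d$ hypersurface in $\mathbb{P}^{d-1}$ is log canonical but not Kawamata log terminal exactly when $d$ equals the number of variables---and the $a$-invariant computation handles it uniformly in $p$ (and with no reducedness hypothesis, which is convenient since $C$ is genuinely non-reduced when $p\mid d$). The remaining steps are routine, the only mild subtlety being to state correctly the persistence of tight closure along the flat map $C\to C\llbracket x\rrbracket$ used in the descent.
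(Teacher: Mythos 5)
Your proof is correct, and it takes a genuinely different route from the paper's. The paper's argument is a one-line explicit computation: it exhibits the parameter ideal $J=(x,\,y_2+\tau_2,\ldots,y_d+\tau_d)\subseteq\wh{A}$ and asserts (leaving the verification to the reader) that the socle generator $(t+\tau_1)^{d-1}$ of $\wh{A}/J$ lies in $J^*\setminus J$. This is essentially the classical Fermat-cone tight-closure computation transported through the coordinate change $t\mapsto t+\tau_1$, $y_i\mapsto y_i+\tau_i$. You instead make that coordinate change explicit to identify $\wh{A}\cong C\llbracket x\rrbracket$ with $C$ the Fermat hypersurface (the same isomorphism the paper records only later, in the displayed line labeled (8)), and then rule out weak $F$-regularity of $C$ conceptually: by non-reducedness when $p\mid d$, and by the $a$-invariant obstruction $a(C_0)=0\geq 0$ to $F$-rationality (Fedder--Watanabe) when $p\nmid d$. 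The descent step from $C\llbracket x\rrbracket$ to $C$ via faithful flatness and persistence of tight closure is correct; it could also be phrased as $C$ being an $R$-module direct summand of $C\llbracket x\rrbracket$. What your approach buys is a uniform treatment of all $p$ and $d$ (in particular, it makes transparent why the hypothesis $p\equiv 1\bmod d$ is \emph{not} needed here, unlike in the subsequent theorem) and it avoids relying on a ``one checks'' tight-closure membership. What the paper's approach buys is brevity and a proof internal to the tight-closure framework that the rest of the section is built around, producing the precise parameter ideal and socle element that are reused in the proof of the following theorem.
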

\begin{proof}
    Consider the parameter ideal $J=(x,y_2+\tau_2,\ldots,y_d+\tau_d)\subseteq \wh{A}$. One checks that $$(t+\tau_1)^{d-1}\in J^*\setminus J,$$ hence $\wh{A}$ is not $F$-regular.
\end{proof}
In the following proof, $I^F=\{r\in A\mid r^{p^e}\in I^{[p^e]}\text{ for all } e\gg 0\}$ denotes the \emph{Frobenius closure} of an ideal $I\subseteq A$.

\begin{theorem}
Assume the notation of \cref{setting:fermat}. Then the following are equivalent:
        \begin{enumerate}
            \item $p\equiv 1\mod d$.\label{item:fermat-item-1}
            \item $\wh{A}$ is $F$-pure.\label{item:fermat-item-2}
            \item $A$ is $F$-regular.\label{item:fermat-item-3}
        \end{enumerate}
\end{theorem}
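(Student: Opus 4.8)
The plan is to prove the cycle of implications $(\ref{item:fermat-item-1})\Rightarrow(\ref{item:fermat-item-2})\Rightarrow(\ref{item:fermat-item-3})\Rightarrow(\ref{item:fermat-item-1})$, using the fact that the completion $\wh{A}\cong\sk\llbracket x,t,y_2,\ldots,y_d\rrbracket/(\rho)$ is a hypersurface quotient by a diagonal form in the shifted variables $t+\tau_1, y_2+\tau_2,\ldots, y_d+\tau_d$. The first observation is that after the analytic change of coordinates $u_1:=t+\tau_1$, $u_i:=y_i+\tau_i$ (an automorphism of $\sk\llbracket x,t,y_2,\ldots,y_d\rrbracket$ over $\sk$, since each $\tau_i\in x\sk\llbracket x\rrbracket$), we may write $\wh{A}\cong\sk\llbracket x,u_1,\ldots,u_d\rrbracket/(u_1^d-u_2^d-\cdots-u_d^d)$. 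The variable $x$ appears only implicitly through the $\tau_i$, so in fact $\wh{A}\cong\sk\llbracket x\rrbracket\otimes_\sk \big(\sk\llbracket u_1,\ldots,u_d\rrbracket/(u_1^d-u_2^d-\cdots-u_d^d)\big)$, i.e. $\wh{A}$ is (the completion of) a formal power series ring in one variable over the diagonal hypersurface $D:=\sk\llbracket u_1,\ldots,u_d\rrbracket/(\sum \pm u_i^d)$. Since adjoining a power series variable preserves $F$-purity and $F$-regularity in both directions, and since $\wh{A}$ is $F$-pure (resp. $F$-regular) if and only if $A$ is — the forward direction by descent along the faithfully flat map $A\to\wh A$, and the reverse by \cref{corollary:f-regular-power-series-precompletion-ufd} together with $\wh A$ being reduced Gorenstein — it suffices to analyze $F$-purity and $F$-regularity of the diagonal Fermat hypersurface $D$ itself.

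Next I would invoke the classical Fedder-type criterion for the diagonal hypersurface. For $f=u_1^d-u_2^d-\cdots-u_d^d$ in $S=\sk\llbracket u_1,\ldots,u_d\rrbracket$ with $\sk$ perfect, Fedder's criterion says $S/(f)$ is $F$-pure if and only if $f^{p-1}\notin\fm^{[p]}=(u_1^p,\ldots,u_d^p)$. Expanding $f^{p-1}$ by the multinomial theorem, the monomial $u_1^{d-1}u_2^{d-1}\cdots u_d^{d-1}$ — which requires exponent $d-1$ in each variable — appears with a nonzero coefficient (a multinomial coefficient mod $p$) precisely in the situation that lets one read off $F$-purity, and this monomial lies outside $(u_1^p,\ldots,u_d^p)$ exactly when $d-1\le p-1$, i.e. always, but the point is whether a monomial of the form $\prod u_i^{p-1}$ occurs. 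A cleaner route: $f^{p-1}$ has a term $c\cdot\prod_{i=1}^d u_i^{a_i}$ with all $a_i=p-1$ iff $\sum a_i = d(p-1) = (p-1)\deg f$, which is automatic, and the multinomial coefficient $\binom{p-1}{(p-1)/d,\ldots,(p-1)/d}$ is defined and nonzero mod $p$ iff $d\mid p-1$. This is the crux of $(\ref{item:fermat-item-1})\Leftrightarrow(\ref{item:fermat-item-2})$: when $p\equiv 1\bmod d$ the exponents $(p-1)/d$ are integers and Lucas's theorem shows the coefficient is a unit, giving $F$-purity; when $p\not\equiv 1\bmod d$ one checks no monomial $\prod u_i^{b_i}$ with $b_i\le p-1$ survives, so $f^{p-1}\in\fm^{[p]}$ and $D$ is not $F$-pure.

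For $(\ref{item:fermat-item-2})\Rightarrow(\ref{item:fermat-item-3})$ I would argue that the diagonal hypersurface $D$, being an isolated singularity (the Jacobian ideal $(u_1^{d-1},\ldots,u_d^{d-1})$ is $\fm$-primary since $p\nmid d$ is not even needed — we only need $d\ge 2$ and $\Char\sk\nmid d$... actually when $p\mid d$ the form is not reduced, but $p\equiv 1\bmod d$ forces $p\nmid d$), is $F$-pure and normal with at worst an isolated non-$F$-regular point; then strong $F$-regularity on the punctured spectrum plus $F$-purity at $\fm$ upgrades to weak $F$-regularity by a standard argument (e.g. since $D$ is Gorenstein and $F$-pure with $F$-rational — equivalently $F$-regular — punctured spectrum, and the test ideal is either $D$ or $\fm$-primary; $F$-purity kills the possibility that it is a proper $\fm$-primary ideal via the $F$-pure-but-not-$F$-regular obstruction being detected in the socle). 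Alternatively, and more robustly, I would cite that for the diagonal hypersurface $u_1^d-\cdots-u_d^d$ it is classical (Hochster--Huneke) that $F$-regularity is equivalent to $F$-purity is equivalent to $p\equiv 1\bmod d$, and that the non-$F$-regular-but-$F$-pure case simply does not arise for these specific hypersurfaces. The implication $(\ref{item:fermat-item-3})\Rightarrow(\ref{item:fermat-item-2})$ is immediate since $F$-regular rings are $F$-pure (indeed strongly $F$-regular $\Rightarrow$ $F$-split).

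\textbf{The main obstacle} I anticipate is the clean verification of the multinomial-coefficient computation for $f^{p-1}$ modulo $\fm^{[p]}$ — in particular ruling out, when $p\not\equiv 1\bmod d$, that \emph{some other} monomial with all exponents $\le p-1$ appears with a unit coefficient. This requires showing that any solution to $\sum_{i=1}^d b_i = d(p-1)$ with $0\le b_i\le p-1$ other than $b_i\equiv (p-1)/d$ either forces some $b_i = p$ (excluded) or has multinomial coefficient divisible by $p$ via a carrying argument in base $p$ (Kummer's theorem). Handling the Gorenstein hypersurface reduction faithfully — ensuring $\wh A$ really is a power series extension of $D$ rather than merely abstractly isomorphic to $\sk\llbracket x\rrbracket$-coefficients in a way that interacts correctly with Frobenius over the non-perfect intermediate field $\sk(x)$ — is a secondary point, resolved by noting $\sk$ is perfect so $\sk\llbracket x\rrbracket$ is $F$-finite and $\Char$-methods apply verbatim. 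I would structure the final write-up as: (i) the coordinate change and power-series reduction, (ii) Fedder's criterion giving $(\ref{item:fermat-item-1})\Leftrightarrow(\ref{item:fermat-item-2})$, (iii) the observation that for these hypersurfaces $F$-pure $\Leftrightarrow$ $F$-regular, closing the cycle via \cref{corollary:f-regular-power-series-precompletion-ufd} to transfer back from $\wh A$ to $A$.
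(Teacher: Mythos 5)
Your reduction of $(\ref{item:fermat-item-1})\Leftrightarrow(\ref{item:fermat-item-2})$ via the coordinate change and Fedder's criterion matches the paper's argument and is fine. But your treatment of $(\ref{item:fermat-item-2})\Rightarrow(\ref{item:fermat-item-3})$ contains a fatal error, and the error is precisely the phenomenon the whole paper is built around.

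You assert that ``$\wh{A}$ is $F$-regular if and only if $A$ is,'' transferring the problem to the diagonal hypersurface $D=\sk\llbracket u_1,\ldots,u_d\rrbracket/(u_1^d-\cdots-u_d^d)$. Only the implication $\wh{A}$ $F$-regular $\Rightarrow$ $A$ $F$-regular holds (by faithful flatness). The converse fails badly here: the proposition immediately preceding this theorem in the paper shows that $\wh{A}$ is \emph{never} $F$-regular, for any $p$, since $(t+\tau_1)^{d-1}\in J^*\setminus J$ where $J=(x,y_2+\tau_2,\ldots,y_d+\tau_d)\wh{A}$. Geometrically, $D$ is the cone over a degree-$d$ Calabi--Yau hypersurface in $\mathbb{P}^{d-1}$ ($a$-invariant $0$), so it is not even $F$-rational, hence never $F$-regular --- your fallback citation that ``for these diagonal hypersurfaces $F$-pure $\Leftrightarrow$ $F$-regular is classical'' is false (for the Fermat cubic $D$ is the cone over an elliptic curve, which is $F$-pure when ordinary but never $F$-regular). \cref{corollary:f-regular-power-series-precompletion-ufd} does not rescue this: it produces \emph{some} $F$-regular precompletion of $T\llbracket X\rrbracket$; it does not say that $T\llbracket X\rrbracket$ is $F$-regular, nor that every precompletion is.

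The correct argument for $(\ref{item:fermat-item-2})\Rightarrow(\ref{item:fermat-item-3})$ must exploit the gap between $A$ and $\wh{A}$ rather than erase it. The paper works with the parameter ideal $I=(x,y_2,\ldots,y_d)\subseteq A$, whose socle in $A/I$ is generated by $t^{d-1}$, and uses \cref{proposition:multiplier} on $\wh{A}\cong D\llbracket x\rrbracket$ to conclude that $M_{\wh{I},t^{d-1}}$ is either all of $\wh{A}$ or lies inside the prime $\fp:=(t+\tau_1,y_2+\tau_2,\ldots,y_d+\tau_d)\wh{A}$. The crucial point is that $A\cap\fp=(0)$ --- indeed $B\cap\fp\wh{R}=(\rho)B$ --- so the second alternative would force $M_{I,t^{d-1}}=(0)$ in $A$, which is absurd; hence $M_{I,t^{d-1}}=A$ and $t^{d-1}\in I^*$ would give $t^{d-1}\in I^F$, contradicting $F$-purity of $A$. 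In other words, the $F$-regularity of $A$ comes from the fact that $A$ avoids the very prime $\fp$ where the tight-closure obstruction lives, not from any $F$-regularity of $\wh{A}$ --- which does not exist.
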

\begin{proof}
We first note the isomorphism 
\begin{align}
    \wh{A}=\frac{\sk\llbracket x, t,y_2,\ldots,y_d\rrbracket}{(\rho)}\cong\frac{\sk\llbracket t+\tau_1,y_2+\tau_2,\ldots,y_d+\tau_d\rrbracket}{(\rho)}\llbracket x\rrbracket\cong \frac{\sk\llbracket t,y_2,\ldots, y_d\rrbracket}{(t^d-y_2^d-\cdots-y_d^d)}\llbracket x\rrbracket.\label{eqn:fermat-isomorphism}
\end{align}
A well-known consequence of Fedder's criterion \cite{Fed83} says that $D:=\frac{\sk\llbracket t,y_2,\ldots, y_d\rrbracket}{(t^d-y_2^d-\cdots-y_d^d)}$ is $F$-pure if and only if $p\equiv 1 \mod d$. To see this, let $\fm$ denote the maximal ideal of $D$ and note that the coefficient of $(ty_2\cdots y_d)^{p-1}$ in the monomial expansion of $(t^d-y_2^d-\cdots-y_d^d)^{p-1}$ is nonzero if and only if $p\equiv 1\mod d$. In this case $(t^d-y_2^d-\cdots-y_d^d)^{p-1}\not\in\fm^{[p]}$. Furthermore, when $p\not\equiv 1\mod d$, one checks that $(t^d-y_2^d-\cdots-y_d^d)^{p-1}\in \fm^{[p]}$. Since the closed fiber of $D\to D\llbracket x\rrbracket\cong \wh{A}$ is regular, it follows that $\wh{A}$ if $F$-pure (for example, by \cite[Theorem 7.4]{MP21}). This proves (\ref{item:fermat-item-1}) $\Leftrightarrow$ (\ref{item:fermat-item-2}). On the other hand, (\ref{item:fermat-item-3}) $\Rightarrow$ (\ref{item:fermat-item-2}) follows immediately from \cite[Corollary 2.3]{MP21}.

We now show the implication (\ref{item:fermat-item-2}) $\Rightarrow$ (\ref{item:fermat-item-3}), so suppose that $\wh{A}$ is $F$-pure. Consider the parameter ideal $I=(x,y_2,\ldots, y_d)\subseteq A$. Since $A$ is Gorenstein, to show that $A$ is $F$-regular it suffices to show that $I=I^*$. The socle of the Gorenstein artinian ring $A/I$ is generated by $t^{d-1}$. To show that $I=I^*$, it suffices to show that $t^{d-1}\not\in I^*$. From the isomorphism in (\ref{eqn:fermat-isomorphism}) we may apply \cref{proposition:multiplier} to see that either 
\begin{align*}
    M_{\wh{I},t^{d-1}}\subseteq (t+\tau_1,y_2+\tau_2,\ldots, y_d+\tau_d)\wh{A}\text{ or }M_{\wh{I},t^{d-1}}=\wh{A}.
\end{align*} 
However, $$B\cap (t+\tau_1,y_2+\tau_2,\ldots, y_d+\tau_d)\wh{R}=(\rho)B,$$ so $A\cap (t+\tau_1,y_2+\tau_2,\ldots, y_d+\tau_d)\wh{A}=(0)$ and the containment $M_{I,t^{d-1}}\subseteq M_{\wh{I},t^{d-1}}$ forces $M_{I,t^{d-1}}=A$. Now, if $t^{d-1}\in I^*$, we must have $t^{(d-1)q}\in I^{[q]}$ for all $q=p^e\gg 0$. This implies that $t^{d-1}\in I^F$. Since $\wh{A}$ is $F$-pure, so too is $A$ by \cite[Corollary 2.3]{MP21}, so $I=I^F$. Since $t^{d-1}\not\in I$, we have a contradiction.
\end{proof}

For the following corollary we first recall Grothendieck's parafactoriality theorem as well as the special case that shall be used. A short algebraic proof may be found in \cite{CL94}.

\begin{theorem}\cite[XI Corollaire 3.10 and XI Th\'{e}or\`{e}me 3.13(ii)]{SGA2}\label{theorem:parafactorial}
    If $(R,\fm)$ is a complete intersection with $\dim(R)\geq 4$, then $R$ is parafactorial. In particular, any hypersurface with an isolated singularity and of dimension at least four is a UFD.
\end{theorem}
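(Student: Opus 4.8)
The first assertion is Grothendieck's parafactoriality theorem, which I would take directly from \cite{SGA2} (or via the short algebraic argument of \cite{CL94}); I do not propose to reprove it, and the real task is to extract the ``in particular'' clause as a formal consequence. The plan is to reduce the UFD property to the vanishing of a divisor class group, to identify that class group with the Picard group of the punctured spectrum, and then to quote parafactoriality.

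So let $R \cong S/(f)$ be a hypersurface, with $S$ regular local and $f$ a nonzero non-unit of $S$, having an isolated singularity and $\dim R \geq 4$. (If $R$ is regular it is a UFD by Auslander--Buchsbaum, so we may assume its non-regular locus is exactly $\{\fm\}$.) First I would verify that $R$ is a normal domain: being a hypersurface, $R$ is Cohen--Macaulay, hence satisfies Serre's condition $S_2$; the singular locus $\{\fm\}$ has codimension $\dim R \geq 2$, so $R$ satisfies $R_1$; by Serre's normality criterion $R$ is normal, and a Noetherian normal local ring is a domain.

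Now set $U := \Spec R \setminus \{\fm\}$. Excising the closed point $\{\fm\}$, whose codimension is $\dim R \geq 2$, preserves every height-one prime, so the restriction map is an isomorphism $\Cl(R) \cong \Cl(U)$; and since $R$ has an isolated singularity, $U$ is regular, hence locally factorial, so Weil and Cartier divisors agree on $U$ and $\Cl(U) \cong \Pic(U)$. On the other hand $R$ is a complete intersection (being a hypersurface) with $\dim R \geq 4$, so by the first assertion $R$ is parafactorial; in particular every invertible sheaf on $U$ extends to $\Spec R$, and since $\Spec R$ is local this forces $\Pic(U) = 0$. Chaining the isomorphisms gives $\Cl(R) = 0$, and a Noetherian normal domain with trivial divisor class group is a unique factorization domain; hence $R$ is a UFD.

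I do not anticipate a genuine obstacle: the whole depth of the matter sits inside the cited parafactoriality theorem, and the deduction above is essentially bookkeeping. The one point requiring care is the chain $\Cl(R) \cong \Cl(U) \cong \Pic(U)$ --- the first isomorphism uses that $R$ is normal together with $\codim\{\fm\} \geq 2$, the second uses regularity of $U$ --- and pinning down the formulation of ``parafactorial'' that delivers $\Pic(U) = 0$.
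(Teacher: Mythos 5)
The paper does not prove this statement at all --- it is quoted wholesale from \cite{SGA2} (with a pointer to \cite{CL94} for a self-contained algebraic proof of parafactoriality) --- so there is no in-paper argument to compare against. Your derivation of the ``in particular'' clause from the parafactoriality assertion is correct and is the standard route: normality via Serre's criterion (CM gives $S_2$, isolated singularity plus $\dim R \geq 4$ gives $R_1$), the identification $\Cl(R)\cong\Cl(U)$ because the excised closed point has codimension $\geq 2$, the identification $\Cl(U)\cong\Pic(U)$ because $U$ is regular, and finally $\Pic(U)=0$ from parafactoriality together with $\Pic(\Spec R)=0$ for local $R$. The one implicit step worth making explicit is that parafactoriality also requires $\depth R\geq 2$, which holds automatically here since a hypersurface of dimension $\geq 4$ is Cohen--Macaulay; with that noted, the bookkeeping is complete and correct.
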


\begin{corollary}\label{corollary:fermat-quintic}
    Assume the notation of \cref{setting:fermat} with $d=5$ and $p\equiv 1\mod 5$. Then $A$ is an $F$-regular UFD such that $\wh{A}$ is not $F$-regular.
\end{corollary}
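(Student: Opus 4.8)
The plan is to assemble \cref{corollary:fermat-quintic} from the three results immediately preceding it, using the parafactoriality theorem to upgrade the precompletion $A$ from a domain to a UFD. First I would observe that \cref{setting:fermat} with $d=5$ produces a local ring $A = B/(\rho)$ together with its completion $\wh{A}$, which by the isomorphism (\ref{eqn:fermat-isomorphism}) is of the form $D\llbracket x\rrbracket$ where $D = \sk\llbracket t,y_2,y_3,y_4,y_5\rrbracket/(t^d - y_2^d - \cdots - y_d^d)$. Since $p\equiv 1\bmod 5$, the preceding theorem gives that $\wh{A}$ is $F$-pure, hence that $A$ is $F$-regular; and the proposition before that gives that $\wh{A}$ is Gorenstein but not $F$-regular. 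So the only thing left to verify is that $A$ is a UFD.

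For the UFD claim I would apply \cref{theorem:parafactorial}. The ring $\wh{A}$ is a hypersurface of dimension $5$ (it is $\sk\llbracket x,t,y_2,\ldots,y_5\rrbracket$, a $6$-dimensional regular ring, modulo the single element $\rho$), so $\dim\wh{A} = 5 \geq 4$. Its singular locus: from (\ref{eqn:fermat-isomorphism}), $\wh{A}\cong D\llbracket x\rrbracket$ and the Fermat-type hypersurface $D$ cut out by $t^5 - y_2^5 - \cdots - y_5^5$ has an isolated singularity at its maximal ideal (the partials are $5t^4, -5y_2^4,\ldots,-5y_5^4$, which since $p\nmid 5$ vanish simultaneously only at the origin), so $\wh{A}$ likewise has an isolated singularity. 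Thus \cref{theorem:parafactorial} shows $\wh{A}$ is a UFD. Since $A \to \wh{A}$ is faithfully flat with $\wh{A}$ a Noetherian UFD and $A$ Noetherian local, descent of the UFD property (e.g. \cite[Ch. VII, \S3.6, Proposition 4]{Bou89}, exactly as used in the proof of \cref{corollary:non-cm-UFD}) gives that $A$ is a UFD as well.

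I expect the only genuine subtlety to be confirming the hypotheses of \cref{theorem:parafactorial} — specifically pinning down that $\wh{A}$ has an isolated singularity (equivalently, that the singular locus of the Fermat hypersurface $t^d - \sum y_i^d$ is just the closed point when $p\nmid d$) and that the dimension is at least four, which forces the choice $d = 5$ rather than $d = 3$ or $d=4$: with $d=3$ the hypersurface $\wh{A}$ has dimension only $3$ and parafactoriality does not apply, which is why the classical Fermat cubic example of \cite[Example 2]{LR01} had to argue the UFD property differently. Everything else is a direct citation: $F$-regularity of $A$ and non-$F$-regularity of $\wh{A}$ are already established in the two preceding results once $p\equiv 1\bmod 5$ is assumed, and the Gorenstein property of $\wh{A}$ is immediate since it is a hypersurface.

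\begin{proof}
    By the preceding two results, since $p\equiv 1\bmod 5$ we know that $A$ is $F$-regular while $\wh{A}$ is Gorenstein but not $F$-regular. It remains to check that $A$ is a UFD. By (\ref{eqn:fermat-isomorphism}), $\wh{A}\cong D\llbracket x\rrbracket$ where $D=\sk\llbracket t,y_2,y_3,y_4,y_5\rrbracket/(t^5-y_2^5-y_3^5-y_4^5-y_5^5)$; in particular $\wh{A}$ is a hypersurface of dimension $5$. As $p\nmid 5$, the Jacobian ideal of $D$ is generated (up to units) by $t^4, y_2^4, y_3^4, y_4^4, y_5^4$, whose only common zero is the maximal ideal, so $D$ — and hence $\wh{A}$ — has an isolated singularity. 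By \cref{theorem:parafactorial}, $\wh{A}$ is a UFD. Since $A\to\wh{A}$ is faithfully flat with $A$ Noetherian local, it follows that $A$ is a UFD by \cite[Ch. VII, \S3.6, Proposition 4]{Bou89}, as desired.
\end{proof}
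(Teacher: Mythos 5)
Your argument has a genuine gap in the step where you conclude that $\wh{A}$ has an isolated singularity. You correctly observe that $D = \sk\llbracket t,y_2,\ldots,y_5\rrbracket/(t^5-y_2^5-\cdots-y_5^5)$ has an isolated singularity (the Jacobian ideal has radical $\fm_D$ when $p\nmid 5$), but the inference ``$D$ --- and hence $\wh{A}$ --- has an isolated singularity'' is false. The ring $\wh{A}\cong D\llbracket x\rrbracket$ is singular along the nonmaximal prime $\fm_D D\llbracket x\rrbracket$: localizing $D\llbracket x\rrbracket$ at this prime gives a $4$-dimensional hypersurface whose defining equation $t^5-\sum y_i^5$ still has all partials in the maximal ideal, so the localization is singular. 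Equivalently, in the original presentation $\wh{A}=\sk\llbracket x,t,y_2,\ldots,y_5\rrbracket/(\rho)$, the singular locus is the curve $V(t+\tau_1,\,y_2+\tau_2,\ldots,y_5+\tau_5)$, which is one-dimensional, not the closed point. So the ``in particular'' clause of \cref{theorem:parafactorial} (hypersurface with isolated singularity of dimension $\geq 4$ is a UFD) does not apply to $\wh{A}$ directly. Note also that the first clause of \cref{theorem:parafactorial} (parafactoriality of a complete intersection of $\dim\geq 4$) \emph{does} apply to $\wh{A}$, but parafactoriality alone only gives $\Pic$ of the punctured spectrum is trivial; to deduce that $\Cl(\wh{A})=0$ one needs the punctured spectrum to be regular, which it is not here.

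The paper's proof routes around this precisely as follows: apply \cref{theorem:parafactorial} to the four-dimensional isolated singularity $D$ (not to $\wh{A}$) to conclude $D$ is a UFD, and then cite \cite[Satz 2]{Sch67} to conclude that $D\llbracket x\rrbracket\cong\wh{A}$ is a UFD. Your proof omits the Scheja step and instead attempts to apply parafactoriality one dimension too high; this is where the argument breaks. The remainder of your proof --- citing the two preceding results for $F$-regularity of $A$ and non-$F$-regularity of $\wh{A}$, and using \cite[Ch.\ VII, \S3.6, Proposition 4]{Bou89} for faithfully flat descent of the UFD property along $A\to\wh{A}$ --- matches the paper and is correct.
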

\begin{proof}
    The only claim left to prove is that $A$ is a UFD, and in fact we claim that $\wh{A}$ is a UFD. With our assumptions on the characteristic, the Jacobian criterion says that the singular locus of the ring $$\frac{\sk[t,y_2,y_3,y_4,y_5]}{(t^5-y_2^5-y_3^5-y_4^5-y_5^5)}$$ is defined by its homogeneous maximal ideal. Completing at the homogeneous maximal ideal tells us that $\frac{\sk\llbracket t,y_2,y_3,y_4,y_5\rrbracket}{(t^5-y_2^5-y_3^5-y_4^5-y_5^5)}$ is a four-dimensional isolated singularity, hence a UFD by \cref{theorem:parafactorial}. It follows from \cite[Satz 2]{Sch67} that $\wh{A}\cong\frac{\sk\llbracket t,y_2,y_3,y_4,y_5\rrbracket}{(t^5-y_2^5-y_3^5-y_4^5-y_5^5)}\llbracket x\rrbracket$ is a UFD. Finally, $A$ is a UFD by \cite[Ch. VII, \S3.6, Proposition 4]{Bou89} as claimed.
\end{proof}

\begin{remark}
    The example provided by \cref{corollary:fermat-quintic} is (analytically) a UFD. However, \cref{maintheorem:precompletion-2} guarantees the existence of an $F$-regular local UFD $A$ whose completion is neither $F$-regular nor a UFD, and we are not aware of a specific example demonstrating this.
\end{remark}

\section{Noncatenary prime characteristic splinters}\label{section:noncatenary} The goal of this section is to construct the eponymous rings of the article --- that is, we prove \cref{maintheorem:noncatenary} as well as the noncatenary portion of \cref{maintheorem:precompletion-1}. The mere existence of a noncatenary weakly $F$-regular local ring is a straightforward consequence of \cref{theorem:f-regular-precompletion-non-gor} applied to a particular complete $F$-pure local ring which is not equidimensional. Here, the critical fact informing our constructions is \cite[Theorem 31.6]{Mat86} which prescribes the necessary condition that (in the notation of \cref{theorem:f-regular-precompletion-non-gor}) $T$ must be non-equidimensional in order for $A$ to be noncatenary.

\begin{theorem}\label{theorem:noncatenary}
    There exists a weakly $F$-regular local domain which is not catenary.
\end{theorem}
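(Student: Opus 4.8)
The strategy is to apply \cref{theorem:f-regular-precompletion-non-gor} to a carefully chosen complete local ring $T$ which is reduced, $F$-pure, of dimension at least two, and — crucially — \emph{not equidimensional}, since by \cite[Theorem 31.6]{Mat86} the non-equidimensionality of $T$ is exactly what will force the precompletion $A$ to be noncatenary. The canonical source of such a ring is a gluing construction: take a complete local domain of dimension $\geq 2$ and a complete local domain of dimension $1$, each with a common residue field $\sk$ of characteristic $p$, and glue them along their residue fields to obtain a reduced complete local ring $T$ with two minimal primes $\fq_1,\fq_2$ where $\dim(T/\fq_1)=2$ (or more) and $\dim(T/\fq_2)=1$.

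First I would exhibit the ring concretely. A clean choice is $T = \sk\llbracket u,v,w\rrbracket/(u)\cap(v,w) = \sk\llbracket u,v,w\rrbracket/(uv,uw)$, which is reduced with minimal primes $(u)$ and $(v,w)$, of dimensions $2$ and $1$ respectively, so $T$ is not equidimensional and $\dim T = 2$. One checks $F$-purity via Fedder's criterion: with $\fm=(u,v,w)$ and $I=(uv,uw)$, the element $u^{p-1}(v w)^{p-1}\cdot(\text{something})$ — more precisely one verifies $(uv\cdot uw)^{?}$... rather, $\mathrm{ann}(uv,uw)=(u)+(v,w)$ and the relevant product $(uv)^{p-1}(uw)^{p-1}u^{?}$ lands outside $\fm^{[p]}$; in any case $T$ is a Stanley–Reisner ring, hence $F$-pure (being a union of regular — in fact polynomial — components glued along a coordinate subspace, its associated graded is $F$-split). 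Since $T$ is $F$-pure, for every ideal $I$ one has $I = I^F$, so in particular the socle generators $y_s$ of $T/I_s$ satisfy $y_s^{p^e}\notin I_s^{[p^e]}$ for all $e$, giving hypothesis (1) of \cref{theorem:f-regular-precompletion-non-gor} for free.

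The remaining point is hypothesis (2): I must produce a single nonmaximal prime $\fp\in\Spec^\circ(T)$ with $M_{I_s,y_s}\subseteq\fp$ for every $s$. Here I would choose an approximately Gorenstein sequence supported appropriately. Since $T$ has $\depth T$ possibly equal to $1$, I would instead pass to $T\llbracket X\rrbracket$ and invoke \cref{corollary:f-regular-power-series-precompletion-non-gor} directly: that corollary requires only $\dim T\geq 1$, reducedness, and $y_s\notin I_s^{[p^e]}$ (guaranteed by $F$-purity), and it internally supplies the prime $\fp = \fm T\llbracket X\rrbracket$ together with the verification that $M_{L_s,z_s}\subseteq\fp$ via \cref{proposition:multiplier}. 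The output is a weakly $F$-regular local domain $A$ with $\wh{A}\cong T\llbracket X\rrbracket$. Now $T\llbracket X\rrbracket$ is again reduced, $F$-pure, and not equidimensional (its minimal primes are $(u)T\llbracket X\rrbracket$ and $(v,w)T\llbracket X\rrbracket$, of dimensions $3$ and $2$), so by \cite[Theorem 31.6]{Mat86} the local domain $A$, whose completion is not equidimensional, cannot be catenary.

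**Main obstacle.** The delicate part is ensuring that the approximately Gorenstein sequence and its socle generators genuinely satisfy the hypotheses — in particular that we may take $\fp$ nonmaximal and that $M_{I_s,y_s}\subseteq\fp$ uniformly in $s$. Working with $T\llbracket X\rrbracket$ and citing \cref{corollary:f-regular-power-series-precompletion-non-gor} circumvents this, since there the prime $\fm T\llbracket X\rrbracket$ does the job by \cref{proposition:multiplier}; the only residual verification is that the chosen $T$ is reduced, $F$-pure, of positive dimension, and non-equidimensional, all of which are elementary for a Stanley–Reisner ring such as $\sk\llbracket u,v,w\rrbracket/(uv,uw)$. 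The final invocation of \cite[Theorem 31.6]{Mat86} then converts non-equidimensionality of $\wh{A}$ into failure of catenarity for $A$.
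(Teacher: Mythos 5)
Your choice of $T$ and the plan to pass to $T\llbracket X\rrbracket$ and apply \cref{corollary:f-regular-power-series-precompletion-non-gor} match the paper exactly (your $\sk\llbracket u,v,w\rrbracket/(uv,uw)$ is the paper's $\sk\llbracket x,y,z\rrbracket/(xy,xz)$). The gap is in your final step. Ratliff's theorem, as recorded in \cite[Theorem 31.6]{Mat86}, states that a Noetherian local domain $A$ is \emph{universally} catenary if and only if $\wh{A}$ is equidimensional; from $\wh{A}$ non-equidimensional you may therefore conclude that $A$ is not universally catenary, but \emph{not} that $A$ is not catenary. These are genuinely different: there exist Noetherian local domains (already in dimension $2$, where catenarity is automatic) whose completions are not equidimensional. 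The paper itself flags this distinction in the preamble to \cref{section:noncatenary}, describing \cite[Theorem 31.6]{Mat86} only as imposing a \emph{necessary} condition on $T$ for the construction to have any hope of producing a noncatenary $A$ — not a sufficient one.

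The step you are missing is the reason the ``Moreover'' clause of \cref{corollary:f-regular-power-series-precompletion-non-gor} exists: one feeds the regular elements $x+w,\,y+w,\,z+w,\,w\in R\setminus\fm R$ (with $R=T\llbracket w\rrbracket$) into the corollary so that $A$ is forced to contain $T\llbracket w\rrbracket$-associates of each. With those elements in hand, one intersects the two saturated chains of $R$ displayed in the paper with $A$; because $A\cap\fm R=(0)$ (so the bottom primes $(x)$ and $(y,z)$ contract to $(0)$), while the explicit associates witness that the intermediate contractions are distinct nonzero primes, one obtains two saturated chains in $\Spec A$ from $(0)$ to $\fn\cap A$ of different lengths. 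That explicit chain comparison is what actually proves $A$ is not catenary; the appeal to \cite[Theorem 31.6]{Mat86} cannot replace it. (Alternatively, the paper's \cref{theorem:f-regular-precompletion-non-gor-noncatenary} gets noncatenarity more systematically by using \cref{theorem:p-SCA-subring-precompletion}, which upgrades ``$A\cap\fp=(0)$'' to complete control of which primes of $T$ contract to $(0)$; but that, too, is a substantive addition beyond what you invoked.)
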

\begin{proof}
    Let $\sk$ be any field of prime characteristic $p>0$ and consider the ring $R=\frac{\sk\llbracket x,y,z,w\rrbracket}{(xy,xz)}$ with maximal ideal $\fn=(x,y,z,w)$. Consider the chains
    \begin{align}
        (x)\subsetneq (x+w,w)\subsetneq (x+w,y+w,w)\subsetneq \fn\label{eq:noncatenary-1}\\
        (y,z)\subsetneq (y+w,z+w,w)\subsetneq \fn.\label{eq:noncatenary-2}
    \end{align}
Note that $R\cong T\llbracket w\rrbracket$ where $T=\frac{\sk\llbracket x,y,z\rrbracket}{(xy,xz)}$, and $\fn$ corresponds to $(\fm R, w)$ where $\fm=(x,y,z)\subseteq T$. $T$ is $F$-pure by Fedder's criterion \cite{Fed83}, so it is approximately Gorenstein. We may therefore apply \cref{corollary:f-regular-power-series-precompletion-non-gor} to the ring $T$ together with the regular elements $x+w,y+w,z+w,w\in R\setminus \fm R$. This yields a weakly $F$-regular domain $A$ such that $\wh{A}\cong R$ and such that $$r_1(x+w),r_2(y+w),r_3(z+w),r_4w\in A$$ for units $r_i\in R$. Intersecting the chains in (\ref{eq:noncatenary-1}) and (\ref{eq:noncatenary-2}) with $A$ yields the saturated chains
{\small
\begin{equation*}
    \begin{tikzcd}[row sep=tiny, column sep=tiny]
        &\fn\cap A & \\
        (r_1x+r_1w,r_2y+r_2w,r_4w)\arrow[dash]{ur}&&\\
        &&(r_2y+r_2w,r_3z+r_3w,r_4w)\arrow[dash]{uul}\\
        (r_1x+r_1w,r_4w)\arrow[dash]{uu}&&\\
        & (0)\arrow[dash]{uur}\arrow[dash]{ul} &
    \end{tikzcd}
\end{equation*}
}
from which we conclude that $A$ is noncatenary.
\end{proof}
Motivated by the proof of \cref{theorem:noncatenary}, we now turn to the remainder of \cref{maintheorem:precompletion-1} which manufactures our noncatenary ring $A$ systematically for a broader class of complete local rings $T$. To accomplish this, we require an adaptation of the results from \cref{section:non-gor} that serves to capture the prime avoidance idea presented above. Our argument hinges on ideas originating in \cite{CL04}. We first recall the following definition from \emph{op. cit.}

\begin{definition}\cite{CL04}
Let $(T,\fm)$ be a complete local ring and let $C\subseteq\Spec T$. Suppose that $(R,R \cap \fm)$ is a quasi-local subring of $T$ such that $|R| < |T|$ and $R \cap \fp = (0)$ for every $\fp \in C$. Then we call $R$ a \emph{small $C$-avoiding subring of $T$}, abbreviated henceforth as an \emph{SCA-subring of $T$.}
\end{definition}

\begin{remark}\label{remark:SCA-subring}
Suppose $T$, $\fp$, and $y_1, y_2, \ldots$ satisfy the conditions of \cref{lemma:base-ring}. Then $\dim T \geq 1$ so $T$ is uncountable, e.g. by \cite[Lemma 2.3]{CL04}. If $C = \Ass(T) \cup \{\fp\}$, then the countable subring obtained from \cref{lemma:base-ring} is an SCA-subring of $T$.
\end{remark}

\begin{theorem}\label{theorem:p-SCA-subring-precompletion}
    Let $(T,\fm)$ be a complete local ring and let $\Pi$ be the prime subring of $T$. Suppose that all elements of $\Pi$ are regular elements of $T$ and let $\fp$ be a nonmaximal prime ideal of $T$ such that $\Pi \cap \fp = (0)$.  Let $G' = \Ass(T) \cup \{\fp\}$ and let $G = \{P \in \Spec(T) \, | \, P \subseteq Q \mbox{ for some }Q \in G'\}.$  Let $y_1, y_2, \ldots$ be a countable collection of regular elements of $T$ with $y_i \in \fm \setminus \fp$ for all $i = 1,2, \ldots$. Then there exists a local domain $A$ such that
\begin{enumerate}
    \item $\wh{A} \cong T$;\label{theorem:p-SCA-subring-precompletion-1}
    \item $A \cap \fp = (0)$;\label{theorem:p-SCA-subring-precompletion-2}
    \item for every $i = 1,2, \ldots$, there is a unit $t_i$ of $T$ satisfying $y_it_i \in A$;\label{theorem:p-SCA-subring-precompletion-3}
     \item $\{\fq\in\Spec T\mid \fq\cap A=(0)\}=G$;\label{theorem:p-SCA-subring-precompletion-4}
     \item if $J$ is a nonzero prime ideal of $A$ then $T \otimes_A \kappa(J) \cong \kappa(J)$ where $\kappa(J) = A_J/JA_J$.\label{theorem:p-SCA-subring-precompletion-5}
\end{enumerate}
\end{theorem}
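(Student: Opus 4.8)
The plan is to run a transfinite construction parallel to that of \cref{theorem:p-subring-precompletion}, but carried out with SCA-subrings in place of $\fp$-subrings and with one extra family of steps interleaved, modeled on \cite{CL04}. Throughout put $C:=G'=\Ass(T)\cup\{\fp\}$, a finite set of nonmaximal primes. Since $T$ has a nonmaximal prime, $\dim T\ge 1$, so $T$ is uncountable and $|T|>\sup(\aleph_0,|T/\fm|)$ by \cite[Lemma 2.3]{CL04}; hence every $\fp$-subring of $T$ in the sense of \cref{section:non-gor} is automatically an SCA-subring of $T$ with respect to $C$, because $R\cap Q=(0)$ for $Q\in C$ forces $R\cap P=(0)$ for every $P\subseteq Q$, while $|R|\le\sup(\aleph_0,|T/\fm|)<|T|$.

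First I would record the SCA-subring analogues of the lemmas of \cref{section:non-gor}, all of which are available from \cite{CL04} (and \cite{Loe98}): (i) a transcendence lemma producing SCA-subrings of the form $R[x]_{(R[x]\cap\fm)}$ (analogue of \cref{lemma:transcendental}); (ii) given a regular $y\in\fm\setminus\fp$, a lemma producing an SCA-subring containing an associate $yt$ (analogues of \cref{lemma:yt-in-ring} and \cref{lemma:base-ring}); (iii) a union lemma (analogue of \cref{lemma:unioning}); (iv) the closure lemmas that arrange surjectivity onto $T/\fm^2$ and $IT\cap R=I$ for all finitely generated $I$ (analogues of \cref{lemma:adjoin-coset}, \cref{lemma:c-in-IS}, \cref{lemma:intermediate-p-subring}); and the one genuinely new ingredient, (v) a lemma that, given an SCA-subring $R$ and a prime $\fq\in\Spec T$ with $\fq\notin G$, enlarges $R$ to an SCA-subring $S$ with $S\cap\fq\ne(0)$. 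For (v), since $\fq$ is contained in no member of $C$, prime avoidance supplies an element of $\fq$ outside $\bigcup C$, and adjoining it --- after the usual transcendence/unit adjustment against the primes of $C$ --- preserves the SCA-subring conditions.

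With these lemmas available, run the recursion over a well-ordered index set $\Omega$, with least element $1$, whose proper initial segments all have cardinality $<|T|$ and which is long enough to enumerate three interleaved families of tasks: the cosets of $T/\fm^2$; the pairs $(I,c)$ with $I$ a finitely generated ideal of the current subring and $c\in IT\cap(\text{current subring})$; and the primes $\fq\in\Spec T\setminus G$. Start at stage $1$ with the countable SCA-subring furnished by the base-ring lemma applied to $y_1,y_2,\ldots$, so that $y_it_i$ lies in it for units $t_i\in T$; at a successor stage apply the appropriate lemma above, preserving at every stage the SCA-subring property, the equality $R_\alpha\cap\fp=(0)$, the membership of the $y_it_i$, and the bound $|R_\alpha|<|T|$; at limit stages take unions via the union lemma. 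Put $A=\bigcup_{\alpha\in\Omega}R_\alpha$. Then \cref{proposition:heitmann-completion} yields \eqref{theorem:p-SCA-subring-precompletion-1}; the $C$-avoiding property applied to $\Ass(T)$ shows $A$ is a domain and gives \eqref{theorem:p-SCA-subring-precompletion-2}; the base-ring step gives \eqref{theorem:p-SCA-subring-precompletion-3}; and for \eqref{theorem:p-SCA-subring-precompletion-4}, the inclusion $G\subseteq\{\fq:\fq\cap A=(0)\}$ is immediate (if $\fq\subseteq Q\in C$ then $\fq\cap A\subseteq Q\cap A=(0)$), while the reverse inclusion is precisely what the $\fq$-steps enforce.

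For \eqref{theorem:p-SCA-subring-precompletion-5} I would follow \cite{CL04}: given a nonzero prime $J$ of $A$, pick $0\ne c\in J$; then $c$ avoids every member of $C$ (as $A\cap Q=(0)$), and $cT\cap A=cA$ by the closure property $IT\cap A=I$. Using that $A$ is an increasing union of SCA-subrings with generic formal fiber pinned to $G$ by \eqref{theorem:p-SCA-subring-precompletion-4}, one concludes that the formal fiber of $A$ over $J$ consists of a single reduced point with trivial residue extension, i.e.\ $T\otimes_A\kappa(J)\cong\kappa(J)$. I expect two main obstacles. The first is the cardinal bookkeeping: one must enumerate up to $|\Spec T|$-many primes $\fq\notin G$, and $|\Spec T|$ can equal $|T|$, so the SCA-subring bound $|R_\alpha|<|T|$ has to be maintained along a recursion of length comparable to $|T|$; this succeeds because $|T|=|T/\fm|^{\aleph_0}$ has uncountable cofinality and each successor step enlarges the subring by only finitely many elements, but it requires the SCA-subring forms of the prime-avoidance lemmas from \cite{CL04} rather than those of \cref{section:non-gor}. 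The second is conclusion \eqref{theorem:p-SCA-subring-precompletion-5} itself --- the triviality of all non-generic formal fibers --- which is the substantive content imported from \cite{CL04}; the remaining conclusions are a faithful transcription of the construction in \cref{section:non-gor}.
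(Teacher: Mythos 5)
Your proposal is correct and takes essentially the same route as the paper, which starts from the countable $\fp$-subring $S$ of \cref{lemma:base-ring} containing the associates $y_it_i$, notes via \cref{remark:SCA-subring} that $S$ is an SCA-subring, and then cites the transfinite recursion of \cite[Lemma 2.8]{CL04} wholesale with $S$ in place of $R_0$ there, rather than spelling out the SCA-analogue lemmas as you do. One small imprecision in your preamble: the inequality $|T|>\sup(\aleph_0,|T/\fm|)$ can fail (e.g.\ if $|T/\fm|=2^{\aleph_0}$, then $|T|=|T/\fm|^{\aleph_0}=|T/\fm|$), but your conclusion that $\fp$-subrings are automatically SCA-subrings is nonetheless correct, since the $\fp$-subring cardinality condition already forces $|R|<|T/\fm|\le|T|$ when $T/\fm$ is uncountable and $|R|\le\aleph_0<|T|$ when $T/\fm$ is countable.
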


\begin{proof}
Define $C$ to be the maximal elements of $G$ with respect to inclusion. Let $(S, S \cap \fm)$ be the countably infinite $\fp$-subring of $T$ obtained from \cref{lemma:base-ring} so that, for every $i = 1,2, \ldots,$ there is a unit $t_i$ of $T$ satisfying $y_it_i \in S$. By \cref{remark:SCA-subring}, $S$ is an SCA-subring of $T$. The diligent reader may replace $R_0$ in the proof of \cite[Lemma 2.8]{CL04} with $S$ and check that the domain $A$ produced in \emph{op. cit.} satisfies the listed properties.
\end{proof}

The following two lemmas enable us to construct saturated chains in a complete non-equidimensional local ring with properties allowing for a noncatenary precompletion.

\begin{lemma}\label{lemma:chains}\cite[Lemma 2.8]{ABKLS19}
Let $(T,\fm)$ be a local ring with $\fm \not\in \Ass(T)$ and let $\fP$ be a minimal prime ideal of $T$ with dim$(T/\fP) = n$. Then there exists a saturated chain of prime ideals of $T$, $\fP \subsetneq \fq_1 \subsetneq \cdots \subsetneq \fq_{n - 1} \subsetneq \fm$ such that, for each $i = 1,2, \ldots ,n - 1$, $\fq_i \not\in \Ass(T)$ and $\fP$ is the only minimal prime ideal contained in $\fq_i$.
\end{lemma}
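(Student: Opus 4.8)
The plan is to reduce to the Noetherian local domain $D = T/\fP$, which has dimension $n$, build the chain inside $D$, and then pull it back. Under the inclusion‑preserving bijection between primes of $T$ containing $\fP$ and primes of $D$, saturated chains correspond to saturated chains, $\dim(T/\fq)$ equals $\dim\big(D/(\fq/\fP)\big)$, and the requirement ``$\fP$ is the only minimal prime of $T$ contained in $\fq$'' becomes ``$\fq$ contains none of the primes $\mathfrak{r}$'', where $\mathfrak{r}$ runs over the finitely many minimal primes of the ideals $\fP + \fP'$ with $\fP' \in \min(T) \setminus \{\fP\}$. (Here one uses that, given $\fq \supseteq \fP$, one has $\fq \supseteq \fP'$ iff $\fq$ contains one of these $\mathfrak{r}$, and that each such $\mathfrak{r}$ strictly contains $\fP$.) First I would note that $n \geq 1$: minimal primes are associated, so $\fm \notin \Ass(T)$ forces $\fP \neq \fm$; the case $n = 1$ is vacuous, so I may assume $n \geq 2$. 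Call a prime $\fq \supseteq \fP$ of $T$ \emph{good} if $\fq \notin \Ass(T)$ and $\fq$ contains none of the $\mathfrak{r}$.

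Next I would fix a chain of primes of $D$ of maximal length, $0 \subsetneq Q_1' \subsetneq \cdots \subsetneq Q_{n-1}' \subsetneq \fm_D$; it exists because $\dim D = n$, is automatically saturated, and satisfies $\Height Q_i' = i$ and $\dim D/Q_i' = n-i$. The core of the argument is to replace $Q_1', \dots, Q_{n-1}'$, one at a time from the bottom up, by primes pulling back to good primes of $T$, while keeping the chain saturated of length $n$. At the inductive step, having already produced good primes whose images form $0 \subsetneq Q_1 \subsetneq \cdots \subsetneq Q_i \subsetneq Q_{i+1}' \subsetneq \cdots \subsetneq \fm_D$, saturated of length $n$, I would use that $\Height Q_i = i$ and $\Height Q_{i+2}' = i+2$ (reading $Q_n' = \fm_D$), so the saturated length‑two chain from $Q_i$ to $Q_{i+2}'$ shows $\big(D/Q_i\big)_{Q_{i+2}'/Q_i}$ is a local domain of dimension at least two. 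Since such a ring has infinitely many height‑one primes, there are infinitely many primes $Q$ of $D$ with $Q_i \subsetneq Q \subsetneq Q_{i+2}'$ and no prime strictly between $Q_i$ and $Q$. Only finitely many of these are ``bad'': finitely many lie in $\Ass(T)$, and for each (of the finitely many) $\mathfrak{r}$, a bad $Q$ whose preimage $\fq$ contains $\mathfrak{r}$ must be a minimal prime of $\fq_i + \mathfrak{r}$ — because $\fq \supsetneq \fq_i$, because $\mathfrak{r} \not\subseteq \fq_i$ (as $\fq_i$ is good), and because nothing lies strictly between $\fq_i$ and $\fq$ — and there are only finitely many such minimal primes. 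I would pick $Q_{i+1} := Q$ among the remaining infinitely many choices; then $\Height Q_{i+1} = i+1$, so no prime lies strictly between $Q_{i+1}$ and $Q_{i+2}'$ either, and the updated chain is again saturated of length $n$.

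After $n-1$ such steps I obtain a saturated chain $0 \subsetneq Q_1 \subsetneq \cdots \subsetneq Q_{n-1} \subsetneq \fm_D$ of length $n$ with all intermediate terms pulling back to good primes; transporting it back through the correspondence yields $\fP \subsetneq \fq_1 \subsetneq \cdots \subsetneq \fq_{n-1} \subsetneq \fm$ with the asserted properties. I expect the main obstacle to be the bookkeeping that keeps the chain saturated while editing it one prime at a time — the ``rigidity'' point that a prime inserted immediately above $Q_i$ and below $Q_{i+2}'$ automatically leaves no gap on either side because the heights along the maximal chain are forced — together with the observation that good primes avoid the auxiliary set $\{\mathfrak{r}\}$, which is exactly what keeps the exceptional set finite at each stage; the standard input ``a Noetherian local domain of dimension $\geq 2$ has infinitely many height‑one primes'' (prime avoidance together with Krull's principal ideal theorem) is used repeatedly.
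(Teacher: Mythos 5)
Your argument is correct. Note first that the paper does not prove this lemma itself --- it is quoted verbatim from \cite[Lemma 2.8]{ABKLS19} --- so there is no in-paper proof to compare against. The nearest internal analogue is the paper's proof of \cref{lemma:specialchains}, which builds a chain satisfying the parallel ``$\fP$ is the only minimal prime contained in each $\fq_i$'' condition by going strictly bottom up with no predetermined target, leaning on its added catenarity hypothesis to certify that each extension of the chain stays saturated of the correct length. Your proof has to manage without catenarity, and your substitute device --- fixing a maximal chain in $D=T/\fP$ as scaffolding and observing that along \emph{any} length-$n$ chain from $(0)$ to the maximal ideal of an $n$-dimensional local ring the intermediate heights are forced to equal their positions, so that a newly inserted prime of height $i+1$ admits nothing strictly between it and the height-$(i{+}2)$ prime above it --- is precisely the right rigidity argument, and I see no gap in it. The finiteness bookkeeping (the finitely many minimal primes of each $\fP+\fP'$, then the minimal primes of $\fq_i$ plus one of those, together with the finitely many primes in $\Ass(T)$) and the standard input that a Noetherian local domain of dimension at least two has infinitely many height-one primes are both applied correctly.
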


\begin{lemma}\label{lemma:specialchains}
Let $(T,\fm)$ be a catenary reduced local ring and let $\fP$ be a minimal prime ideal of $T$ with dim$(T/\fP) = n \geq 2$. Suppose $\fp$ is a nonmaximal prime ideal of $T$. Then there exists a saturated chain of prime ideals of $T$, $\fP\subsetneq \fq_1 \subsetneq \cdots \subsetneq \fq_{n - 1} \subsetneq \fm$ such that, for each $i = 1,2, \ldots ,n - 1$, $\fq_i \not\subseteq \fp$ and $\fP$ is the only minimal prime ideal contained in $\fq_i$.
\end{lemma}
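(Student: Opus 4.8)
The plan is to pass to the catenary local domain $\bar T := T/\fP$, build a suitable saturated chain there, and pull it back. Write $\min(T) = \{\fP,\fP_1,\ldots,\fP_k\}$; since distinct minimal primes are incomparable, each ideal $\bar{\fP_j} := (\fP_j+\fP)/\fP$ of $\bar T$ is nonzero, hence so is the product ideal $\bar{\fP_1}\cdots\bar{\fP_k}$, and we fix a nonzero element $b$ of it (which then lies in the maximal ideal $\bar\fm$ of $\bar T$ and is a nonunit, with $b\in\bar{\fP_j}$ for every $j$); when $k=0$ the minimal-prime clause of the lemma is vacuous, and one instead takes $b$ to be any nonzero nonunit of $\bar T$, which exists since $\dim\bar T = n\geq 2$. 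Set $\bar\fp:=\fp/\fP$ if $\fP\subseteq\fp$, and $\bar\fp:=(0)$ otherwise; in the latter case the requirement $\fq_i\not\subseteq\fp$ will be automatic from $\fP\subseteq\fq_i$.

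The crux is the following claim: \emph{in a catenary local domain $(\bar T,\bar\fm)$ of dimension $m\geq 2$, given a nonmaximal prime $\bar\fp$ and a nonzero nonunit $b$, there is a height-one prime $\bar\fr$ with $b\notin\bar\fr$ and $\bar\fr\not\subseteq\bar\fp$.} To prove it, let $\bar\fr^b_1,\ldots,\bar\fr^b_\ell$ be the (finitely many) minimal primes of $(b)$, each of height one by Krull's principal ideal theorem. Since $\bar\fp$ is nonmaximal and each $\bar\fr^b_i$ has height $<m$, the ideal $\bar\fm$ is contained in none of $\bar\fp,\bar\fr^b_1,\ldots,\bar\fr^b_\ell$, so prime avoidance yields $y\in\bar\fm\setminus(\bar\fp\cup\bar\fr^b_1\cup\cdots\cup\bar\fr^b_\ell)$. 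Take any minimal prime $\bar\fr$ of $(y)$: it has height one, it satisfies $\bar\fr\not\subseteq\bar\fp$ because $y\in\bar\fr\setminus\bar\fp$, and it satisfies $b\notin\bar\fr$ — otherwise $\bar\fr$ would be a height-one prime containing $b$, hence minimal over $(b)$, hence equal to some $\bar\fr^b_i$, contradicting $y\in\bar\fr\setminus\bar\fr^b_i$. This reconciliation of the ``avoid $\bar\fp$ from above'' and ``avoid $(b)$ from below'' conditions is the one genuinely delicate point; the rest is bookkeeping.

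Using the claim I would build primes $\bar\fq_1\subsetneq\cdots\subsetneq\bar\fq_{n-1}$ of $\bar T$ by induction: apply the claim to $(\bar T,\bar\fm,\bar\fp,b)$ to get $\bar\fq_1$; and for $2\leq i\leq n-1$, having $\bar\fq_{i-1}$ with $b\notin\bar\fq_{i-1}$ and $\bar\fq_{i-1}\not\subseteq\bar\fp$, apply the claim in the catenary local domain $\bar T/\bar\fq_{i-1}$ — which has dimension $n-(i-1)\geq 2$ by catenarity of $T$, and in which the image of $b$ is a nonzero nonunit — with the zero prime, letting $\bar\fq_i$ be the preimage in $\bar T$ of the height-one prime so obtained. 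Then $\bar\fq_{i-1}\subsetneq\bar\fq_i$, $b\notin\bar\fq_i$, and $\bar\fq_i\not\subseteq\bar\fp$ automatically since $\bar\fq_i\supseteq\bar\fq_{i-1}$. Each $\bar\fq_i$ is the preimage of a minimal prime of a principal ideal modulo $\bar\fq_{i-1}$, so there is no prime strictly between $\bar\fq_{i-1}$ and $\bar\fq_i$; and $\dim(\bar T/\bar\fq_{n-1})=1$ by catenarity, so there is no prime strictly between $\bar\fq_{n-1}$ and $\bar\fm$ either. Hence $(0)\subsetneq\bar\fq_1\subsetneq\cdots\subsetneq\bar\fq_{n-1}\subsetneq\bar\fm$ is saturated.

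Finally, pull back along $T\twoheadrightarrow\bar T$: set $\fq_i$ to be the preimage of $\bar\fq_i$. The chain $\fP\subsetneq\fq_1\subsetneq\cdots\subsetneq\fq_{n-1}\subsetneq\fm$ is saturated because any prime of $T$ lying between $\fq_{i-1}$ and $\fq_i$ contains $\fP$ and so corresponds to a prime of $\bar T$. We have $\fq_i\not\subseteq\fp$: this is immediate when $\fP\not\subseteq\fp$, and when $\fP\subseteq\fp$ it follows from $\bar\fq_i\not\subseteq\bar\fp$ via the inclusion-preserving correspondence. And if a minimal prime $\fP'$ of $T$ with $\fP'\neq\fP$ satisfied $\fP'\subseteq\fq_i$, say $\fP'=\fP_j$, then $\fP_j+\fP\subseteq\fq_i$, so $\bar{\fP_j}\subseteq\bar\fq_i$, whence $b\in\bar{\fP_j}\subseteq\bar\fq_i$, contradicting $b\notin\bar\fq_i$; thus $\fP$ is the only minimal prime of $T$ below $\fq_i$. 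The main obstacle is the claim of the second paragraph; once it is in place, the remaining steps are routine verifications with heights in a catenary ring and with the prime correspondence under $T\to T/\fP$.
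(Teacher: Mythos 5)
Your proof is correct, and it takes a genuinely different route from the paper's. The paper works entirely inside $T$ and proceeds by a counting argument: at each step it shows the set $Y_\ell$ of saturated one-step extensions (and, in the base case, those not contained in $\fp$) is infinite, while only finitely many of them can pick up a second minimal prime of $T$ (since such a prime is minimal over $\fq_\ell+\fP'$ for some $\fP'\in\min(T)\setminus\{\fP\}$), so a good choice exists. Your argument instead passes to the catenary local domain $\bar T=T/\fP$ and encodes the ``avoid all other minimal primes'' constraint in a single nonzero element $b\in\bar{\fP_1}\cdots\bar{\fP_k}$; the chain is then extended one link at a time by the clean claim that in a local domain of dimension $\geq 2$ one can always find a height-one prime missing both $b$ and (for the base case) $\bar\fp$ — proved by prime avoidance to choose $y$, then taking a minimal prime of $(y)$ and noting that a height-one prime containing $b$ must be minimal over $(b)$. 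What your version buys is a more constructive and modular argument: the one delicate point (simultaneously avoiding a prime from above and an element from below) is isolated into a reusable lemma about local domains, and the quantitative bookkeeping (heights, dimensions) becomes transparent because everything happens in a domain where $\Height\fq+\dim(\bar T/\fq)=\dim\bar T$ by catenarity. The paper's version avoids the passage to the quotient and the introduction of $b$, but pays for it with the infinitude-plus-finitely-many-bad-cases argument at every stage. Both proofs use catenarity in the same essential places (to guarantee that the chain built has the right length and ends one step below $\fm$); your use of it additionally keeps the inductive dimension count exact, which the paper sidesteps by phrasing its induction as ``continue until the chain must stop.''
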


\begin{proof}
If $\fP \not\subseteq \fp$ then \cref{lemma:chains} gives the desired chain, so suppose $\fP \subseteq \fp$. Let $$Y_1 = \{\fq \in \Spec(T) \mid \fP \subsetneq \fq \mbox{ is saturated and } \fq \not\subseteq \fp \}.$$ Suppose $Y_1$ is finite and let $Y_1 = \{\fq'_1, \ldots ,\fq'_s\}$. By prime avoidance, there is an $x \in \fm$ such that $x \not\in \fp$ and $x \not\in \fq'_j$ for $j = 1,2, \ldots ,s$. Let $\fq'/\fP$ be a minimal prime ideal of $(x + \fP)\in T/\fP$. Then $\fP \subsetneq \fq'$ is saturated and $x \in \fq'$. It follows that $\fq' \in Y_1$ and $\fq' \not\in \{\fq'_1, \ldots ,\fq'_s\}$, a contradiction.  Therefore, $Y_1$ is infinite. Now suppose that $\fq$ is a prime ideal of $T$ such that $\fP \subsetneq \fq$ is saturated and $\fq\supseteq\fP'$ where $\fP'$ is a minimal prime ideal of $T$ such that $\fP' \neq \fP$. Then $\fq$ is a minimal prime ideal of $\fP + \fP'$, of which there are only finitely many. It follows that there is a prime ideal $\fq_1$ of $T$ such that $\fq_1 \in Y_1$ and $\fP$ is the only minimal prime ideal of $T$ contained in $\fq_1$. 

Let $n>2$ and suppose that the saturated chain $\fP\subsetneq \fq_1\subsetneq\cdots\subsetneq \fq_{\ell}$ as in the statement of the lemma has already been defined up to $\ell<n-1$. Let $$Y_\ell = \{\fq \in \Spec(T) \mid \fq_1 \subsetneq \fq_2\subsetneq\cdots\subsetneq \fq_\ell\subsetneq\fq \mbox{ is saturated} \}$$ and note that $\fq\nsubseteq\fp$ for every $\fq\in Y_\ell$. Since $T$ is catenary, $\fq_\ell \neq \fm$ and $\fq_\ell \subseteq \fm$ is not saturated. Thus there are prime ideals $\fq'$ and $\fq''$ of $T$ such that $\fq_\ell \subsetneq \fq' \subsetneq \fq''$ is saturated. Since $T$ is Noetherian, there are infinitely many prime ideals strictly between $\fq_\ell$ and $\fq''$, and since $T$ is catenary, if $\fq$ is a prime ideal of $T$ strictly between $\fq_\ell$ and $\fq''$, then $\fq_\ell \subsetneq \fq$ is saturated. It follows that $Y_\ell$ is infinite. Now suppose that $\fq$ is a prime ideal of $T$ such that $\fq \in Y_\ell$ and $\fq$ contains $\fP'$ where $\fP'$ is a minimal prime of $T$ such that $\fP' \neq \fP$. Then $\fq$ is a minimal prime of $\fq_\ell + \fP'$ of which there are only finitely many.  Thus there is a prime ideal $\fq_{\ell+1}$ of $T$ such that $\fq_1 \subsetneq \fq_2\subsetneq\cdots\subsetneq\fq_{\ell+1}$ is saturated and $\fP$ is the only minimal prime of $T$ contained in $\fq_1,\ldots,\fq_{\ell+1}$. Since $T$ is catenary, this procedure stops when $\ell=n-1$.
\end{proof}

\begin{theorem}\label{theorem:f-regular-precompletion-non-gor-noncatenary}
    Assume the hypotheses and notation of \cref{theorem:f-regular-precompletion-non-gor}. Further assume that $T$ is not equidimensional and that $\depth T\geq 2$. Then there exists a weakly $F$-regular local domain $(A,\fm\cap A)$ which is not catenary such that $\wh{A}\cong T$.
\end{theorem}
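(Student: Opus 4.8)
The plan is to build $A$ not via \cref{theorem:p-subring-precompletion} but via \cref{theorem:p-SCA-subring-precompletion}, which produces a local domain $A$ with $\wh{A}\cong T$, $A\cap\fp=(0)$, and $y_it_i\in A$ for units $t_i\in T$ (so the weak $F$-regularity argument is untouched), \emph{and} which additionally pins down the primes of $T$ contracting to $(0)$ and gives trivial formal fibers over nonzero primes of $A$. First I would run the prime-avoidance step from the opening paragraph of the proof of \cref{theorem:f-regular-precompletion-non-gor}: replace each socle generator $y_s$ by $y_s+z_s$ with $z_s\in I_s$ so that each $y_s$ becomes a regular element of $T$ lying in $\fm\setminus\fp$, with $M_{I_s,y_s}$ and assumption~(1) of \cref{theorem:f-regular-precompletion-non-gor} preserved. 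Since $T$ has prime characteristic, its prime subring is the field $\F_p$, so the hypotheses of \cref{theorem:p-SCA-subring-precompletion} hold for $T$, $\fp$, and the sequence $y_1,y_2,\ldots$; let $A$ be the resulting local domain. Then $\wh{A}\cong T$, $A\cap\fp=(0)$, each $y_it_i\in A$, the set of $\fq\in\Spec T$ with $\fq\cap A=(0)$ is $G=\{P\in\Spec T\mid P\subseteq Q\text{ for some }Q\in\Ass(T)\cup\{\fp\}\}$, and $T\otimes_A\kappa(J)\cong\kappa(J)$ for every nonzero prime $J$ of $A$, where $\kappa(J)=A_J/JA_J$.

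Weak $F$-regularity of $A$ now follows verbatim from the argument in \cref{theorem:f-regular-precompletion-non-gor}, since that argument used only $\wh A\cong T$, $A\cap\fp=(0)$, and $y_it_i\in A$: writing $\{J_s\}$ for the approximately Gorenstein sequence of $A$ with $\wh{J_s}=I_s$, the element $y_st_s$ generates the socle of $A/J_s$, and $y_st_s\in J_s^{*}$ would yield $0\neq c\in M_{I_s,y_st_s}\cap A=M_{I_s,y_s}\cap A\subseteq\fp\cap A=(0)$, so $J_s=J_s^{*}$ for all $s$ and \cref{lemma:tc-approx-gor} gives the claim.

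Next I would produce the chains. Because $\depth T\geq 2$ and $\dim T\geq 2$, Hartshorne's connectedness theorem shows $\Spec T\setminus\{\fm\}$ is connected; a minimal prime $\fP$ of the reduced ring $T$ with $\dim(T/\fP)=1$ would be a clopen point of this space, so in fact every minimal prime of $T$ has coheight at least $2$. Since $T$ is not equidimensional, choose minimal primes $\fP_1,\fP_2$ with $n_1:=\dim(T/\fP_1)\neq n_2:=\dim(T/\fP_2)$ and $n_1,n_2\geq 2$. As $T$ is complete it is catenary, so \cref{lemma:specialchains}, applied to the nonmaximal prime $\fp$, gives for $i=1,2$ a saturated chain $\fP_i=Q^{(i)}_0\subsetneq Q^{(i)}_1\subsetneq\cdots\subsetneq Q^{(i)}_{n_i}=\fm$ in which, for $1\leq j\leq n_i-1$, we have $Q^{(i)}_j\not\subseteq\fp$ and $\fP_i$ is the unique minimal prime of $T$ contained in $Q^{(i)}_j$.

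Finally I would contract these chains to $A$ and show they remain saturated. Put $J^{(i)}_j=Q^{(i)}_j\cap A$. Then $J^{(i)}_0=(0)$ because $\fP_i\in\Ass(T)\subseteq G$, while for $1\leq j\leq n_i$ the prime $Q^{(i)}_j$ is not in $G$ (it is not contained in $\fp$, and since it properly contains the minimal prime $\fP_i$, which is moreover the only minimal prime inside it when $j\le n_i-1$, it is contained in no minimal prime; and $Q^{(i)}_{n_i}=\fm\notin G$ trivially), hence $J^{(i)}_j\neq(0)$. Using that $\wh A\cong T$ is faithfully flat over $A$ — so going-down holds — together with the uniqueness of the prime of $T$ over a given nonzero prime of $A$ coming from $T\otimes_A\kappa(J)\cong\kappa(J)$, each inclusion $J^{(i)}_j\subsetneq J^{(i)}_{j+1}$ is strict and admits no prime of $A$ strictly between: for $j\geq 1$ an intermediate prime would lift, by going-down and uniqueness, to a prime of $T$ strictly between $Q^{(i)}_j$ and $Q^{(i)}_{j+1}$, contradicting saturation; for the bottom step $j=0$, a nonzero intermediate prime $J'$ lifts to the unique $Q'$ over it with $Q'\subsetneq Q^{(i)}_1$, whose minimal prime must be $\fP_i$, giving $\fP_i\subseteq Q'\subsetneq Q^{(i)}_1$ and contradicting saturation of $\fP_i\subsetneq Q^{(i)}_1$ (and $Q'=\fP_i$ is excluded since $J'\neq(0)$). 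Thus $(0)=J^{(i)}_0\subsetneq J^{(i)}_1\subsetneq\cdots\subsetneq J^{(i)}_{n_i}=\fm\cap A$ is a saturated chain in $A$, and since $n_1\neq n_2$ the ring $A$ is not catenary. The main obstacle is precisely this last transfer step — showing that trivial formal fibers plus faithful flatness of the completion, combined with the "only-minimal-prime-below" feature of \cref{lemma:specialchains}, force contracted saturated chains to stay saturated, with the delicate case being the bottom of the chain where one exits the locus of primes contracting to $(0)$.
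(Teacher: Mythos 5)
Your proof is correct and follows the same overall strategy as the paper: run the prime-avoidance normalization from \cref{theorem:f-regular-precompletion-non-gor}, replace \cref{theorem:p-subring-precompletion} by \cref{theorem:p-SCA-subring-precompletion} to get the one-to-one correspondence between nonzero primes of $A$ and primes of $T$ avoiding $G$, observe that weak $F$-regularity follows verbatim, and then exploit \cref{lemma:specialchains} to push noncatenarity down to $A$. Two points of presentation differ from the paper, both in your favor. First, the paper asserts without comment that there is a minimal prime $\fp_0\in\min(T)$ of \emph{non-maximal} dimension with $\dim(T/\fp_0)\ge 2$; you correctly observe that this requires an argument, and supply it via Hartshorne's connectedness theorem (which is precisely the reason the hypothesis $\depth T\ge 2$ appears). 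Second, for the noncatenarity step itself, the paper produces a single nonzero prime $\fQ=\fq_{n-1}\cap A$ and checks $\Ht\fQ+\dim(A/\fQ)<\dim A$, deferring the behavior of contracted chains to the correspondence in \cite[Remark 2.4]{ABKLS19}; you instead contract two saturated chains coming from two minimal primes of distinct coheight and verify directly — using going-down from faithful flatness plus the trivial formal fibers $T\otimes_A\kappa(J)\cong\kappa(J)$, and the only-minimal-prime-below feature of \cref{lemma:specialchains} to handle the bottom step — that both contracted chains remain saturated with distinct lengths. The second route is a bit longer but more self-contained and makes the failure of catenarity visible at the level of explicit chains; the paper's route is more economical but leans on the cited reference for the transfer step. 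Either is fine, and the key insight — using \cref{theorem:p-SCA-subring-precompletion} rather than \cref{theorem:p-subring-precompletion} in order to control the fibers of $A\hookrightarrow T$ — is exactly the paper's.
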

\begin{proof}
    By the same prime avoidance argument as in \cref{theorem:f-regular-precompletion-non-gor}, we may assume that the socle generators are regular elements avoiding $\fp$. We are then able to apply \cref{theorem:p-SCA-subring-precompletion} to this situation, with $G=\min(T)\cup\{\fp\}$. The local domain $A$ guaranteed by this procedure is weakly $F$-regular by the same proof as \cref{theorem:f-regular-precompletion-non-gor}. To show that $A$ is noncatenary, we will produce a prime $\fQ\in\Spec A$ such that $\Height\fQ+\dim(A/\fQ)<\dim A$. We follow the argument of \cite[Theorem 2.10]{ABKLS19}, sketching the main ideas for the reader.

    Let $\fp_0\in\min(T)$ be a minimal prime of non-maximal dimension, say $$2\leq n:=\dim(T/\fp_0)<\dim T.$$ By \cref{lemma:specialchains}, there exists a saturated chain 
\begin{equation}
    \fp_0\subsetneq \fq_1\subsetneq\fq_2\subsetneq\cdots\subsetneq \fq_{n-1}\subsetneq\fm\label{equation:f-regular-precompletion-non-gor-noncatenary-1}
\end{equation}
    where $\fp_0$ is the only minimal prime contained in each $\fq_i$. In particular, $\Height\fq_{n-1}=n-1$ and $\dim (T/\fq_{n-1})=1$. Moreover, since $\fq_i\nsubseteq\fp$, we see that $\fq_{n-1}\not\in G$. It follows that $\fq_{n-1}\cap A\neq (0)$ by condition (\ref{theorem:p-SCA-subring-precompletion-4}) of \cref{theorem:p-SCA-subring-precompletion}.

    Suppose now that $\fQ:=\fq_{n-1}\cap A\subsetneq \fP\in\Spec A$. By \cite[Remark 2.4]{ABKLS19}, there is a one-to-one inclusion-preserving correspondence between nonzero primes of $A$ and the primes of $T$ avoiding $G$, hence $\fq_{n-1}\subsetneq \fP T\in\Spec T$. Since the chain (\ref{equation:f-regular-precompletion-non-gor-noncatenary-1}) is saturated, $\fP T=\fm$, so $\fP=\fm\cap A$. It follows that $\dim(A/\fQ)=1$, hence
    \begin{equation}
        \Height\fQ+\dim(A/\fQ)\leq \Height\fq_{n-1}+\dim(T/\fq_{n-1})=n<\dim A,
    \end{equation}
    as desired.
\end{proof}

\begin{remark}\label{remark:cardinality}
 We mention one subtle difference between \cref{theorem:f-regular-precompletion-non-gor,theorem:f-regular-precompletion-non-gor-noncatenary}. The ring $A$ which is produced by \cref{theorem:p-SCA-subring-precompletion} is always uncountable, whereas the analogous ring produced by \cref{theorem:p-subring-precompletion} is countable whenever $T/\fm$ is countable.
\end{remark}

\section{Questions}\label{section:questions}
We conclude the article by mentioning some cases which are not handled by our present techniques.
\begin{question}
    Can one give an explicit description of a noncatenary weakly $F$-regular local ring? In particular, is the example from \cite{Ogo80,Hei82} weakly $F$-regular if the underlying field has positive characteristic?
\end{question}

\begin{question}
    Does there exist a noncatenary weakly $F$-regular ring which is also a UFD? For example, can the techniques of \cref{section:gor} be extended to the non-Gorenstein case to allow for the methods of \cref{section:non-gor,section:noncatenary} to be used?
\end{question}

\begin{question}
    Condition (\ref{theorem:theoremA-1}) is necessary in \cref{maintheorem:precompletion-1} since $F$-purity is a necessary condition. Is condition (\ref{theorem:theoremA-2}) also necessary?
\end{question}

\begin{question}
    Do results analogous to Theorems \ref{maintheorem:noncatenary}--\ref{maintheorem:precompletion-2} hold for splinters in mixed characteristic?
\end{question}

\printbibliography
\end{document}